\newtheorem{theorem}{Theorem}[section]
\newtheorem{proposition}{Proposition}[section]
\newtheorem{corollary}{Corollary}[section]
\newtheorem{remark}{Remark}[section]
\newtheorem{lemma}{Lemma}[section]
\numberwithin{equation}{section}
\def\p{\partial}
\def\d{\mathrm{d}}
\def\no{\nonumber}
\def\R{\mathbb{R}}
\def\eps{\varepsilon}
\def\div{\mathrm{div}}
\def\u{\mathbf{u}}
\def\l{\langle}
\def\r{\rangle}
\def\exp{\mathrm{exp}}
\def\A{\mathcal{A}}
\def\B{\mathcal{B}}
\def\C{\mathcal{C}}
\def\u{\mathfrak{u}}
\def\L{\mathcal{L}}
\def\P{\mathcal{P}}
\def\I{\mathcal{I}}
\def\J{\mathcal{J}}
\def\N{\mathcal{N}}
\def\n{\mathsf{n}}
\def\m{\mathsf{m}}
\def\j{\mathsf{j}}
\newcounter{wronumber}\setcounter{wronumber}{1}
\begin{document}
\title[Incompressible Euler limit from the Boltzmann equation]
			{Incompressible Euler limit from the Boltzmann equation with Maxwell reflection boundary condition in the half-space}

\author[Ning Jiang]{Ning Jiang}
\address[Ning Jiang]{\newline School of Mathematics and Statistics, Wuhan University, Wuhan, 430072, P. R. China}
\email{njiang@whu.edu.cn}
 
\author[Chao Wang]{Chao Wang}
\address[Chao Wang]{\newline School of Mathematics Sciences, Peking University, Beijing, 100871, P. R. China}
\email{wangchao@math.pku.edu.cn}

\author[Yulong Wu]{Yulong Wu}
\address[Yulong Wu]{\newline School of Mathematics and Statistics, Wuhan University, Wuhan, 430072, P. R. China}
\email{yulong\string_wu@whu.edu.cn}

\author[Zhifei Zhang]{Zhifei Zhang}
\address[Zhifei Zhang]{\newline  School of Mathematics Sciences, Peking University, Beijing, 100871, P. R. China}
\email{zfzhang@math.pku.edu.cn}
\maketitle

\begin{abstract}
  In this paper, we rigorously justify the incompressible Euler limit of the Boltzmann equation with general Maxwell reflection boundary condition in the half-space. The accommodation coefficient $\alpha \in (0,1]$ is assumed to be $O(1)$.  Our construction of solutions includes the interior fluid part and Knudsen-Prandtl coupled boundary layers. The corresponding solutions to the nonlinear Euler and nonlinear Prandtl systems are taken to be shear flows. Due to the presence of the nonlinear Prandtl layer, the remainder equation loses  one order normal derivative. The key technical novelty lies in employing the full conservation laws to convert this loss of the normal derivative into the loss of tangential spatial derivative, avoiding any loss of regularity in time. By working within an analytic $L^2 \mbox{-} L^\infty$ framework, we establish the uniform estimate on the remainder equations, thus justify the validity of the incompressible Euler limit from the Boltzmann equation for the shear flow case. \\

   \noindent\textsc{Keywords.} Incompressible Euler limit; Boltzmann equation; Incompressible Prandtl equation; Maxwell reflection boundary condition; Accommodation coefficients; Hilbert expansion. \\

   \noindent\textsc{AMS subject classifications.}  35B25; 35F20; 35Q20; 76N15; 82C40
\end{abstract}





\section{Introduction}

\subsection{Boltzmann equation with Maxwell boundary condition}
The Boltzmann equation in the incompressible Euler scaling is given by:
\begin{equation}\label{BE}
\left\{
\begin{array}{l}
\eps \p_t F_\eps + v \cdot \nabla_x F_\eps = \frac{1}{\eps^q} B (F_\eps, F_\eps) \quad \qquad {\text{in}}\  \R_+ \times \Omega \times \R^3 \,,\\[1.5mm]
F_\eps (0, x, v) = F_\eps^{in} (x,v) \geq 0 \qquad \ \, \qquad {\text{in}}\  \Omega \times \R^3 \,.
\end{array}
\right.
\end{equation}
Here, the dimensionless number $\eps>0$ is the Storuhal number, and $\eps^q$ with $q>1$ is the Knudsen number. The function $F_\eps (t,x,v) \geq 0$ represents the number density of particles, where $v \in \R^3$ is the velocity and $x \in \Omega = \{ x \in \R^3; x_3 > 0 \}$ denotes the position in the half-space. For the setting up of the scaled equation \eqref{BE} and its relation to the incompressible Euler equations, see \cite{BGL1} and \cite{SaintRaymond-2003-ARMA}.

The scaled Boltzmann equation \eqref{BE} is imposed with the Maxwell reflection boundary condition
\begin{equation}\label{MBC}
\gamma_- F_\eps = (1-\alpha) L \gamma_+ F_\eps + \alpha K \gamma_+ F_\eps \quad \textrm{on } \R_+ \times \Sigma_- \,.
\end{equation}
For the derivation from the dimensional Boltzmann equation to the scaled form \eqref{BE}, the readers can find the details in the books \cite{Sone-2002book,Sone-2007-Book}, or papers \cite{BGL1, BGL2}. This paper aims to rigorously justify the incompressible Euler limit of the Boltzmann equation. 

 The Boltzmann collision operator is defined as
\begin{equation}\label{B-collision}
  \begin{aligned}
    B (F_1, F_2) (v) =& \int_{\R^3} \int_{\mathbb{S}^2} |v-v_1|^{\gamma} F_1 (v_1^\prime ) F_2 (v^\prime ) b(\theta) \d \omega \d v_1 \\
   & - \int_{\R^3} \int_{\mathbb{S}^2} |v-v_1|^{\gamma} F_1 (v_1) F_2 (v) b(\theta) \d \omega \d v_1 \,,
  \end{aligned}
\end{equation}
where $v_1^\prime  = v_1 + [(v-v_1) \cdot \omega] \omega$, $v^\prime  = v_1 + [(v-v_1) \cdot \omega] \omega$, $\cos \theta = (v-v_1) \cdot \omega / |v - v_1|$, $0 < b (\theta) \leq C |\cos \theta|$, and $-3 < \gamma \leq 1$. Such collision operators cover the full range of cutoff collision kernels. The accommodation coefficient $\alpha \in [0,1]$ describes how much the molecules accommodate the state of the wall. The special case $\alpha=0$ corresponds to specular reflection, while $\alpha=1$ refers to complete diffusion. In this paper, we consider the case where $\alpha \in(0,1]$ and $\alpha =O(1)$. We remark that the specular and almost specular cases are much easier, as the Prandtl layer is weak and linear (see \cite{Huang-Wang-Wang-Xiao-2025,Guo-Huang-Wang-2021-ARMA,Jiang-Luo-Tang-2021-arXiv}).

We denote $n = (0, 0, -1)$ by the outward normal of $\Omega$. Let $\Sigma : = \p \Omega \times \R^3$ be the phase space boundary of $\Omega \times \R^3$. The phase boundary $\Sigma$ can be split by outgoing boundary $\Sigma_+$, incoming boundary $\Sigma_-$, and grazing boundary $\Sigma_0$:
\begin{equation*}
  \begin{aligned}
    & \Sigma_+ = \{ (x, v) : x \in \p \Omega \,, v \cdot n = - v_3 > 0 \} \,, \\
    & \Sigma_- = \{ (x, v) : x \in \p \Omega \,, v \cdot n = - v_3 < 0 \} \,, \\
    & \Sigma_0 = \{ (x, v) : x \in \p \Omega \,, v \cdot n = - v_3 = 0 \} \,.
  \end{aligned}
\end{equation*}
Let $\gamma_\pm F = \mathbbm{1}_{\Sigma_{\pm}} F$. The specular-reflection $L \gamma_+ F_\eps$ and the diffuse-reflection part $K \gamma_+ F_\eps$ in \eqref{MBC} are
\begin{equation}\label{def_Lgamma_Kgamma}
  \begin{aligned}
  L \gamma_+ F_\eps (t, x, v) = & F_\eps (t, x, v - 2[ (v - u_w) \cdot n]n) \,, \\
  K \gamma_+ F_\eps (t, x, v) = & \sqrt{\tfrac{2 \pi} {\theta_w( t, \bar{x})}} M_w (t, \bar{x}, v) \int_{v \cdot n >0} \gamma_+ F_\eps ( v \cdot n ) \d v \,,
  \end{aligned}
\end{equation}
respectively, where $M_w (t, \bar{x}, v)$ is the local Maxwellian distribution function corresponding to the wall (boundary) with the form
\begin{equation}\label{M_w}
  \begin{aligned}
    M_w (t, \bar{x}, v) = \frac{1}{(2 \pi \theta_w (t, \bar{x}))^{3/2}} \exp\Big\{ - \frac{|v- u_w (t, \bar{x})|^2}{2 \theta_w (t, \bar{x})} \Big\} \,.
  \end{aligned}
\end{equation}
The quantities $u_w, \theta_w$ are, respectively, the velocity and temperature of the boundary. It is worth mentioning that the boundary condition \eqref{MBC} with \eqref{def_Lgamma_Kgamma} and \eqref{M_w} implies that there is no instantaneous flow across the boundary:
\begin{equation*}
  \int_{\R^3} (v- u_w) \cdot n F_\eps \d v = 0\,.
\end{equation*}
For simplicity, we assume $u_w=0, \theta_w =1$ throughout this paper. The moving and non-isothermal boundary (i.e. $u_w\neq 0$ and $\theta_w$ is non-constant) case will be involved more fruitful phenomena, and will be considered in a separate paper. 

\subsection{Historical remarks.} The hydrodynamic limit of the Boltzmann equation is a classical problem. Among numerous references, we mention several standard books that provide comprehensive introductions to the Boltzmann equation and its fluid dynamic limits, including \cite{CIP-1994,Cercignani-1988,SaintRaymond-2009-book}. Comparing to the extensive studies for the other fluid limits (such as incompressible Navier-Stokes, compressible Navier-Stokes and Euler equations), rigorous results on the incompressible Euler limit are much fewer. One of the main reasons is that in the incompressible Euler scaling, the available energy estimates for the scaled Boltzmann equation is not enough to derive the incompressible Euler equations directly. For the domains with boundary, the situation is even worse, since this limiting process includes the inviscid limit from the incompressible Navier-Stokes to Euler. It is well-known that in the limit, there arises the nonlinear Prandtl equations which are  not well-understood yet, in particular for the classical solutions. Furthermore, when the Boltzmann equation is endowed with boundary conditions (Maxwell reflection or incoming boundaries), the kinetic boundary layers (or Knudsen layer) is also generated. Except some simple cases (for example, specular reflection boundary), the kinetic Knudsen layers and the fluid Prandtl layers are strongly coupled in the leading order of the expansion. In other words, the boundary layers appears as the strong boundary layers. This usually makes the problem much more challenging. 

For the case that there is no boundary, an early contribution by De Masi, Esposito, and Lebowitz \cite{De Masi-Esposito-Lebowitz-1989-CPAM} established the incompressible Euler and Navier-Stokes limits on the torus via the Hilbert expansion method in the framework of classical solutions. After the DiPerna-Lions \cite{DiPerna-Lions}, Saint-Raymond \cite{SaintRaymond-2003-ARMA,SaintRaymond-2009-book} made important contributions on this problem in the class of renormalized solutions. She used the {\em H\mbox{-}}theorem and relative entropy method to prove the convergence from the fluctuations (no temperature) of the renormalized solutions of the Boltzmann equation towards the very weak (but very general) solutions of the Euler equation constructed by Lions \cite{Lions-1} called dissipative solutions. Thus it provided for the first time a non-conditional result.  It is also shown the weak-strong stability, i.e. if the Euler equation admits a smooth solution, then the convergence is strong. 

More recently, there are two major progress on the incompressible Euler limit with boundary in the framework of analytic solutions. Jang and Kim \cite{Jang-Kim} (and \cite{Cao-Jang-Kim-JDE}) rigorously justified that under the incompressible Euler scaling, the asymptotics of the incompressible Navier-Stokes equations with $O(\eps^\kappa)$ (here $\eps^\kappa=\frac{Knudsen\quad\!\!\!\!number}{Mach\quad\!\!\!\!number}$) dissipation from Boltzmann equation endowed the complete diffuse reflection boundary condition in the domain $\mathbb{T}^2 \times \mathbb{R}_+$, corresponding to accommodation coefficient $\alpha = 1$. Thus, Prandtl layer was not involved in this process. As a consequence, the incompressible Euler limit follows as a byproduct of the inviscid limit. In \cite{Kim-Nguyen-2024-arXiv}, Kim and Nguyen justified the incompressible Euler and incompressible Prandtl limits for shear flows without passing through the Navier-Stokes equations. This work is the first to rigorously derive the nonlinear incompressible Prandtl equations directly from the Boltzmann equation. Their perspective was more on the fluid dynamic side: the Prandtl layer was obtained from the Navier-Stokes system with small dissipation, rather than being derived directly from the Boltzmann equation. As a result, the Knudsen layer was not visible in their framework. They constructed a local Maxwellian consisting of an Euler profile plus a Prandtl layer, which approximates the solutions to the Navier-Stokes equations. To ensure control over the remainder, they required analyticity in time and tangential variables $(t, x_1, x_2)$. We also mention the work \cite{Cao-Jang-Kim-JDE}, which addressed the incompressible Euler limit in the presence of heat convection. All three of the above-mentioned works considered the fully diffuse reflection boundary condition (i.e. $\alpha=1$) and focused on the regime where the Knudsen number scales as $\varepsilon^q$ with $q $ very close to $1$. 

In \cite{Huang-Wang-Wang-Xiao-2025}, the incompressible Euler limit from the Boltzmann equation under the specular reflection boundary condition (i.e. $\alpha=0$) is justified also by employing the expansion method. For this simple boundary condition, the Prandtl layers are linear, and appear in the higher orders as weak boundary layers.  The justification could be done in the framework of classical solutions.

For related works on the incompressible and compressible Navier-Stokes and compressible Euler limits, we refer the reader to \cite{BGL1,BGL2,Briant-JDE2015,Golse-SRM-04,Golse-SRM-09,Guo-2006-CPAM,Jiang-Masmoudi-CPAM,Guo-Huang-Wang-2021-ARMA,Jiang-Luo-Tang-2021-arXiv, Jiang-Wu-2025-KRM, Jiang-Wu-CNS2025} and references therein. We omit further details on these well-known results, as the focus of the present paper is to justify the incompressible Euler limit. We only point out the key difference between the (compressible and incompressible)  Navier-Stokes and Euler limits from the point view of boundary layers. In fluid limits, once the Maxwellian determined by the limiting (or asympotic) fluid equations can not satisfy the boundary condition endowed on the original Boltzmann equation, the Knudsen layer would be needed. For the Knudsen boundary layer equations, the number of the solvability conditions depend on the speed of the boundary. In particular, when the boundary is not moving vertically,  the number of solvability conditions is four.   For the detailed information, see \cite{Bardos-Caflisch-Nicolaenko-1986-CPAM, Coron-Golse-Sulem-1988-CPAM, Golse-2008, Golse-Perthame-Sulem-1988-ARMA, He-Jiang-Wu-2024, Huang-Wang-SIMA2022, Huang-Jiang-Wang-2023, Jiang-Wu-2025-KRM, Jiang-Wu-CNS2025, Jiang-Luo-Wu-arXiv, Jiang-Luo-Wu-Yang-2025, Ukai-Yang-Yu-CMP2004}. For the compressible and incompressible Navier-Stokes equations (including temperature), the number of boundary conditions is four. In theses cases, Knudsen layers are enough to match the expansions. In other words, the boundary layer from fluid side, i.e. the Prandtl layers are not needed. However, it is well-known that for the Euler equations (both compressible and incompressible), only one boundary condition is needed (for simplicity, we assume the boundary is not moving vertically). Thus, the number of the boundary condition provided from the solvability of the Knudsen layers does not match with the number of the boundary conditions the fluid equation needed.  So for the Euler limits, both Knudsen and Prandtl layers are needed, and furthermore, they are coupled. This is the reason that usually, the limits to Euler equations are harder than those to the Navier-Stokes limits. It is only discussed for the steady boundary cases above. When the boundary is moving, the situations will be even more complex. Another issue needed to be remarked is that the boundary layer are strong or weak (i.e. in the leading order or the higher order in the expansion). This depends essentially on the types of the boundary conditions of the originally kinetic equations (Maxwell reflection or incoming boundaries, and the orders of the accommodation coefficients )  and  the types of the fluid equations (nonlinear or linear, Navier-Stokes or Euler, compressible or incompressible, etc.). 

In the current paper, we aim to prove the incompressible Euler limit (for the shear flow case) from the Boltzmann equation with general Maxwell reflection boundary condition, i.e. the accommodation coefficient $\alpha\in (0, 1]$. We employ the classical Hilbert expansion method in the framework of analytic solutions. The key novelty is that our expansion includes the complete coupled Knudsen and Prandtl boundary layers. Furthermore, in the incompressible Euler scaling, the Prandtl boundary layers appear strongly. This coupling in the leading order bring the strong singular behavior in the remainder equation. The treatment of this difficulty will be explained in subsection 1.5. 

\subsection{Notations}\label{subsec_notations}
Throughout this paper, we use the notation $\bar{U} = (U_1, U_2)$ for any vector $U = (U_1, U_2, U_3) \in \R^3$. Moreover, for simplicity of presentations, we use the notation $V^0 : = V |_{x_3 = 0}$ for any symbol $V = V(x)$, which may be a function, vector or operator. For any derivative operator $D_x$, we denote by $ D_x V^0 = (D_x V)^0 \,.$

For the functional spaces,  $H^s$ denotes the Sobolev space $W^{s,2} (\Omega)$ with norm $\| \cdot \|_{H^s}$. The notation $\| f \|_p$ refers to the $L^p_{x,v}$-norm when $f = f(t,x,v)$; otherwise, if $f = f(t,x)$, it denotes the $L^p_{x}$-norm. The symbol  $\l \cdot, \cdot \r$ is the $L^2_{x,v}$-inner product. We also introduce the following analytic norms:
\begin{equation}\label{def_X_Y}
  \begin{aligned}
    &\|g\|_X^2 = \sum_{\m \in \mathbb{N}_0^2} \tfrac{\sigma(t)^{2 |\m|}}{(\m !)^2} \|\p^\m g \|_2^2 \,,\\
     & \|g\|_{X^{\frac{1}{2}}}^2 = \sum_{\m \in \mathbb{N}_0^2} \tfrac{\sigma(t)^{2 |\m| - 1}| \m |}{(\m !)^2} \| \p^\m g \|_2^2 \,,\\
    & \|g\|_Y^2 = \sum_{\m \in \mathbb{N}_0^2} \tfrac{\sigma(t)^{2 |\m|}}{(\m !)^2} \| \p^\m g \|_\infty^2 \,.
  \end{aligned}
\end{equation}
where we used the notation $\p^\m = \p^{\m_1}_1 \p_2^{\m_2}$ for $\m = (\m_1 , \m_2) \in \mathbb{N}_0^2$.

For any function $G = G(t, \bar{x}, y, v)$, with $(t, \bar{x}, y, v) \in \R_+ \times \R^2 \times \overline{\R}_+ \times \R^3$,  the Taylor expansion at $y = 0$ is
\begin{equation}
  G = G^0 + \sum_{1 \leq l \leq N} \tfrac{y^l}{l !} G^{(l)} + \tfrac{y^{N+1}}{(N+1) !} \widetilde{G}^{(N+1)} \,,
\end{equation}
where the symbols
\begin{equation*}
  \begin{aligned}
    G^{(l)} = (\p_y^l G ) (t, \bar{x}, 0, v) \,, \ \widetilde{G}^{(N+1)} = ( \p_y^{N+1} G ) (t, \bar{x}, \eta, v) \ \textrm{for some } \eta \in (0, y).
  \end{aligned}
\end{equation*}
We define the global Maxwellian $\mu(v)$ as 
\begin{equation}\label{def_mu}
	\mu(v) = \tfrac{1}{(2 \pi)^{3/2}} e^{-\frac{|v|^2}{2}}\,.
\end{equation}
By using the global Maxwellian $\mu(v)$, the linearized collision operator $\L$ and the bilinear term $\Gamma$ are defined as
\begin{equation*}
  \begin{aligned}
    \L g &= - \frac{1}{\sqrt{\mu}} \Big\{ B (\mu, \sqrt{\mu} g) + B (\sqrt{\mu} g, \mu) \Big\} \,,\\
	\Gamma (f, g) &= \frac{1}{\sqrt{\mu}}  B (\sqrt{\mu}f, \sqrt{\mu} g)  \,.
  \end{aligned}
\end{equation*}
The null space $\mathcal{N}$ of $\L$ is spanned by (see \cite{Caflish-1980-CPAM}, for instance)
\begin{equation*}
  \begin{aligned}
      \sqrt{\mu} \,, \ v_i \sqrt{\mu} \ (i=1,2,3) \,, \   \tfrac{|v|^2 -3 }{2}  \sqrt{\mu} \,.
  \end{aligned}
\end{equation*}
The weighted $L^2$-norm
\begin{equation*}
\begin{aligned}
\| g \|^2_\nu = \int_{\Omega} \int_{\R^3} |g (x,v)|^2 \nu (v) \d x \d v \,,
\end{aligned}
\end{equation*}
is defined by the collision frequency $\nu (v)$
\begin{equation*}
\begin{aligned}
\nu (v)  = \int_{\R^3} \int_{\mathbb{S}^2} b (\theta) |v - v^\prime |^{\gamma} \mu(v^\prime ) \d v^\prime  \d \omega \,,
\end{aligned}
\end{equation*}
which satisfies
\begin{equation}\label{nu-phi}
\begin{aligned}
\nu  \thicksim \l v \r^{\gamma} \,.
\end{aligned}
\end{equation}
Here, $\l v \r = \sqrt{1 + |v|^2}$ and the notation $A \thicksim B $ means $C^{-1} B \leq A \leq CB$ for some harmless constant $C>0$. Let $\mathcal{P} g$ be the $L^2_v$ projection with respect to $\mathcal{N}$. Then it is well-known (see for example \cite{Caflish-1980-CPAM}) that there exists a positive number $c_0 > 0$ such that
\begin{equation}\label{Hypocoercivity}
  \begin{aligned}
    \l \L g, g \r \geq c_0 \| \P^\perp g \|^2_\nu\,.
  \end{aligned}
\end{equation} 
We can be decomposed $g$ into the macroscopic and microscopic parts:
\begin{equation*}
  \begin{aligned}
    g = & \mathcal{P}  g + \P^\perp  g \\
    \equiv &   \big(\rho + v \cdot u + (\tfrac{|v|^2-3}{2}) \theta \big)  \sqrt{\mu} + \P^\perp g \,,
  \end{aligned}
\end{equation*}
where $(\rho, u, \theta)$ represent the macroscopic density, bulk velocity, and temperature, respectively. 

For later use, we define the following Burnett functions:
\begin{equation}\label{def_A_B}
	\A_{ij}(v)= (v_i v_j - \tfrac{\delta_{ij}}{3} |v|^2) \sqrt{\mu}\,, \quad \B_i(v) = \tfrac{|v|^2 - 5}{2} v_i \sqrt{\mu}\,, \quad (i,j=1,2,3)\,,
\end{equation}
and the scalar function
\begin{equation}\label{def_C}
	\C(v) = (\tfrac{|v|^4}{4} - \tfrac{5 |v|^2}{2} + \tfrac{15}{4}) \sqrt{\mu}\,.
\end{equation}
It is straightforward to verify that $\A, \B , \C \in \mathcal{N}^\perp$, and each entry of $\A,\B,\C$ is perpendicular to each other. We define
\begin{equation}\label{hat_A_B}
  \hat{\A} = \L^{-1} \A\,, \quad \hat{\B} = \L^{-1} \B\,,
\end{equation}
where $\L^{-1}$ is the pseudo-inverse of $\L$, defined on $\mathcal{N}^\perp$. The properties of $\L^{-1}$ can be found in \cite{JLT-2022-arXiv}.

To derive uniform estimates for the remainder term, we further introduce the following modified collision operator:
\begin{equation}\label{def_L_eps_Gamma_eps}
  \L_\eps g = - \tfrac{1}{\sqrt{\mu_\eps}} \big\{ B(\mu_\eps, \sqrt{\mu_\eps} g ) + B( \sqrt{\mu_\eps} g , \mu_\eps ) \big\}, \quad  \Gamma_\eps (f,g) = \tfrac{1}{\sqrt{\mu_\eps}} B(\sqrt{\mu_\eps} f, \sqrt{\mu_\eps} g ) \,,
\end{equation}
where $\mu_\eps$ denotes the local Maxwellian
\begin{equation}\label{def_mu_eps}
  \mu_\eps(v) : = \tfrac{\rho_\eps}{(2 \pi \theta_\eps )^{\frac{3}{2  }}}e^{-\frac{|v - u_\eps|^2}{2 \theta_\eps}} \,,
\end{equation}
with $(\rho_\eps, u_\eps, \theta_\eps) = (1 + \eps(\rho_0 + \rho^b_0), \eps (u_0 + u^b_0), 1 + \eps (\theta_0 + \theta^b_0) )$. Here, $(\rho_0, u_0 ,\theta_0)$ solves the incompressible Euler system \eqref{incompressible_Euler_0}, and $(\rho^b_0, u^b_0 ,\theta^b_0)$ solves the incompressible Prandtl equation \eqref{nonlinear_Prandtl}. 

Define the null space of $\L_\eps$ by $\N_\eps$, which is spanned by 
\begin{equation}\label{def_chi_0_4}
  \chi_0 = \tfrac{1}{\sqrt{\rho_\eps}} \sqrt{\mu_\eps} \,,\ \chi_j = \tfrac{v_j - u_{\eps,j}}{\sqrt{\rho_\eps \theta_\eps}} \sqrt{\mu_\eps} \ (j=1,2,3)\,, \ \chi_4 = \tfrac{1}{\sqrt{\rho_\eps}} \big(\tfrac{|v - u_\eps|^2 }{2 \theta_\eps} - \tfrac{3}{2}\big)\sqrt{\mu_\eps}\,.
\end{equation}
 We decompose \(g = \P_\eps g + \P^\perp_\eps g \) into the macroscopic part and microscopic part, where
\begin{equation}
  \P_\eps g= a \chi_0 + \sum_{j=1}^{3} b_j  \chi_j + c \chi_4 \,,
\end{equation}
with
\begin{equation}\label{def_a_b_c}
  a = \int_{\R^3} g \chi_0 \d v\,,\  b_j = \int_{\R^3} g \chi_j \d v\,,\ c = \tfrac{2}{3} \int_{\R^3} g \chi_4 \d v \,.
\end{equation} 
Then, the coercivity of $\L_\eps$ implies that
\begin{equation}\label{def_tilde_c_0}
  \langle \L_\eps g , g \rangle \geq \hat{c}_0 \| \P_\eps^\perp g \|_{\nu_\eps}^2 \geq \tilde{c}_0\|\P^\perp_\eps g \|_\nu^2 \,,
\end{equation}
for some constants $\hat{c}_0, \tilde{c}_0>0$, where $\nu_\eps (v)$ denotes the collision frequency, given by 
\begin{equation}\label{def_nu_eps}
  \nu_\eps(v) = \int_{\R^3} \int_{\mathbb{S}^2} b(\theta) |v - v^\prime |^\gamma \mu_\eps(v^\prime) \d v^\prime \d \omega \sim \nu(v) \sim \langle v \rangle^\gamma \,.
\end{equation}

To employ the $L^2 \mbox{-} L^\infty$ framework \cite{Guo-2010-ARMA,Guo-Jang-Jiang-2009-KRM,Guo-Jang-Jiang-2010-CPAM}, we also introduce a global Maxwellian to derive the $L^\infty_{x,v}$ estimates
\begin{equation}\label{def_mu_M}
  \mu_M(v) := \tfrac{1}{(2 \pi \theta_M )^{\frac{3}{2  }}}e^{-\frac{|v  |^2}{2 \theta_M}} \,,
\end{equation}
where $\theta_M $ satisfies the condition
\begin{equation}\label{assumption_theta_M}
  \theta_M < \min_{(t,x) \in [0,T] \times \Omega} \theta_\eps \leq \max_{(t,x) \in [0,T] \times \Omega} \theta_\eps < 2 \theta_M \,.
\end{equation}
Note that under the condition \eqref{assumption_theta_M}, there exist constants $c_1$, $c_2$ such that for some $\frac{1}{2} < \alpha_0 <1$ and for each $(t,x,v) \in [0,T] \times \Omega \times \R^3$, the following holds
\begin{equation}\label{mu_eps_mu_M}
  c_1 \mu_M \leq \mu_\eps \leq c_2 \mu_M^{\alpha_0} \,.
\end{equation}
For the remainder term $g_{R,\eps}$, we define
 \begin{equation}\label{def_h}
  h_{R,\eps} = \sqrt{\tfrac{\mu_\eps}{\mu_M}} \omega_\beta (v) g_{R,\eps}\,,
 \end{equation}
 with the weight function
 \begin{equation}\label{def_omega_beta}
  \omega_\beta (v) := (1 + |v|^2)^{\beta} \quad \textrm{for} \quad \! \beta \geq \tfrac{9 - 2\gamma}{2}\,.
 \end{equation}

\subsection{Main result}In this paper, we employ the Hilbert expansion method to rigorously justify the incompressible Euler limit of the Boltzmann equation \eqref{BE} with the general Maxwell boundary condition \eqref{MBC} as $\varepsilon \to 0$. We consider the scaling regime where the Mach number is equal to the Strouhal number. Then, according to the classical von Karman relation, the Reynolds number is given by the ratio of the Mach number to the Knudsen number, i.e., 
\begin{equation*}
  \mathrm{Re}= \tfrac{\mathrm{Ma}}{\mathrm{Kn}} = \tfrac{1}{\eps^{q - 1}}\,.
\end{equation*}
The thickness of the Prandtl layer is $ O( \tfrac{1}{\sqrt{\mathrm{Re}}})= O(\eps^{\frac{q-1}{2}})$ (see \cite{Schlichting-1955-book} for instance) and the thickness of the Knudsen layer is $O(\mathrm{Kn})=O(\eps^q)$ (see \cite{Sone-2002book,Sone-2007-Book}). We choose $q= 2 $ in \eqref{BE} for simplicity of the ansatz. For the case of general $q>1$, the formal expansion will be more complex. But the analysis of the corresponding boundary layer equations will be basically the same. An outline of the formal derivation for the general $q>1$ will be provided in Section \ref{Sec_general_q}. As shown in Section \ref{sec_Hilbert} below, when $q =2 $, our truncated ansatz of solution to the Boltzmann equation takes the form
\begin{equation*}
   \begin{aligned}
    F_\eps (t,x,v) =& \mu +\eps \sum_{k=0}^{13} \sqrt{\mu}(\sqrt{\eps}^k g_k(t,x,v) +\sqrt{\eps}^k g^{b}_k(t,\bar{x},\tfrac{x_3}{\sqrt{\eps}},v) + \sqrt{\eps}^k g^{bb}_k)(t,\bar{x},\tfrac{x_3}{\eps^2},v) \\
    &+ \sqrt{\mu_\eps}\sqrt{\eps}^8 g_{R,\eps}(t,x,v) \,.
   \end{aligned}
\end{equation*}
Here, $\mu$ and $\mu_\eps$ are defined in \eqref{def_mu} and \eqref{def_mu_eps}, respectively. The functions $g_k$, $g^b_k$, and $g^{bb}_k$ correspond to the interior solution, the Prandtl boundary layer, and the Knudsen boundary layer, respectively, while $g_{R,\eps}$ denotes the remainder term.

For the initial data of the Boltzmann equation, $g_k^{in}, g^{b,in}_k, g^{bb,in}_k$ can be constructed by the same ways of constructing the expansions $g_k, g^{b}_k, g^{bb}_k$ in Section \ref{sec_Hilbert}, respectively. We impose the well-prepared initial data on the scaled Boltzmann equation \eqref{BE}: 
\begin{equation}\label{BE_initial_data}
 \begin{aligned}
   F_\eps (0,x,v) = & \mu +\eps \sum_{k=0}^{13} \sqrt{\mu}(\sqrt{\eps}^k g_k^{in}(x,v) +\sqrt{\eps}^k g^{b,in}_k(\bar{x},\tfrac{x_3}{\sqrt{\eps}},v) + \sqrt{\eps}^k g^{bb,in}_k)(\bar{x},\tfrac{x_3}{\eps^2},v) \\
   & + \sqrt{\mu_\eps}\sqrt{\eps}^8 g^{in}_{R,\eps}(x,v) \,.
 \end{aligned}
\end{equation}

Recall that the accommodation coefficient $\alpha \in (0,1]$ is assumed to be $O(1)$. Refer to \eqref{def_X_Y} for the definition of the analytic norms. We now present the result on the incompressible Euler limit for the shear flow.
\begin{theorem}\label{Main_theorem}
  Let $-3 < \gamma \leq 1$ and $q =2$. Assume that $\alpha \in(0 ,1]$ with $\alpha = O(1)$. Let $g_k(t,x,v) , g^b_k(t,\bar{x},\tfrac{x_3}{\sqrt{\eps}},v)$ and $g^{bb}_k(t,\bar{x},\tfrac{x_3}{\eps^2},v)$ be constructed as in Proposition \ref{Prop_g_k_g_b_k}. In particular, $$g_0 = (\rho_0 + v \cdot u_0 +\tfrac{|v|^2 -3}{2} \theta_0 )\sqrt{\mu} \quad \textrm{and} \quad \! g^b_0 = (\rho^b_0 + v \cdot u^b_0 + \tfrac{|v|^2 -3}{2}\theta^b_0 )\sqrt{\mu}\,,$$ where $(\rho_0,u_0,\theta_0)$ and $(\rho^b_0,u^b_0,\theta^b_0)$ are the solutions to the incompressible Euler system \eqref{incompressible_Euler_0} and the incompressible Prandtl system \eqref{nonlinear_Prandtl} for the shear flow, respectively. There is a small constant $\eps_0 >0$ such that if for $\eps \in (0,\eps_0)$
  \begin{equation*}
    \|g_{R,\eps}(0) \|_X+  \eps^3\| h_{R,\eps}(0) \|_{Y} < \infty\,,
  \end{equation*}
 where $h_{R,\eps}$ is given by \eqref{def_h} with $\beta \geq \tfrac{9 - 2 \gamma}{2}$, then there exist a time $T>0$ such that the scaled Boltzmann equation \eqref{BE} with Maxwell reflection boundary condition \eqref{MBC} and well-prepared initial data \eqref{BE_initial_data} admits a unique solution over the time interval $t \in[0,T]$ with the expanded form 
\begin{equation*}
   \begin{aligned}
    F_\eps (t,x,v) =& \mu +\eps \sum_{k=0}^{13} \sqrt{\mu}(\sqrt{\eps}^k g_k(t,x,v) +\sqrt{\eps}^k g^{b}_k(t,\bar{x},\tfrac{x_3}{\sqrt{\eps}},v) + \sqrt{\eps}^k g^{bb}_k)(t,\bar{x},\tfrac{x_3}{\eps^2},v) \\
    &+ \sqrt{\mu_\eps}\sqrt{\eps}^8 g_{R,\eps}(t,x,v) \geq 0 \,,
   \end{aligned}
\end{equation*}
where the remainder $g_{R,\eps}$ satisfies
  \begin{equation*}
   \| g_{R,\eps}\|_X^2 + \eps^6 \| h_{R,\eps}\|_Y^2  \leq C_T <\infty \,.
  \end{equation*}
\end{theorem}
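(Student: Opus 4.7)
The plan is to substitute the truncated Hilbert ansatz into the scaled Boltzmann equation \eqref{BE}, cancel the expansion terms by invoking the equations satisfied by $g_k$, $g_k^b$, $g_k^{bb}$, and close a coupled $L^2$--$L^\infty$ analytic energy estimate on the resulting remainder equation uniformly in $\eps$. First I would write down the remainder equation; using the cascade of equations satisfied by the profiles in Proposition~\ref{Prop_g_k_g_b_k} and recentering around the local Maxwellian $\mu_\eps$, one obtains schematically
\[\eps\p_t g_{R,\eps}+v\cdot\nabla_x g_{R,\eps}+\tfrac{1}{\eps}\L_\eps g_{R,\eps}=\eps^{7/2}\Gamma_\eps(g_{R,\eps},g_{R,\eps})+\Gamma_\eps(\mathcal A_\eps,g_{R,\eps})+\Gamma_\eps(g_{R,\eps},\mathcal A_\eps)+\mathcal S_\eps,\]
where $\mathcal A_\eps$ is assembled from the interior, Prandtl and Knudsen profiles, $\mathcal S_\eps$ collects truncation sources of size $O(\sqrt\eps)$ in the $X$ norm, and the Maxwell reflection \eqref{MBC} descends to a corresponding inhomogeneous boundary condition for $g_{R,\eps}$.

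Next comes the $L^2$ estimate in the analytic norm $X$. Applying the tangential derivatives $\p^\m$ (which commute with $v\cdot\nabla_x$), testing against $\sigma(t)^{2|\m|}/(\m!)^2\,\p^\m g_{R,\eps}$, and summing over $\m\in\mathbb N_0^2$, the hypocoercivity \eqref{def_tilde_c_0} provides the dissipation $\tilde c_0\eps^{-1}\|\P_\eps^\perp g_{R,\eps}\|_X^2$, while a strictly decreasing analytic radius $\sigma(t)$ generates the tangential-dissipation term $-\sigma'(t)\|g_{R,\eps}\|_{X^{1/2}}^2$ familiar from the analytic Prandtl theory, designed to absorb the transport losses coming from the Prandtl amplification $\eps^{-1/2}\Gamma_\eps(g_0^b,\cdot)$. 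The boundary trace is handled by splitting $\gamma_+g_{R,\eps}$ through \eqref{def_Lgamma_Kgamma} and exploiting $\alpha>0$, which converts the outgoing trace into a strictly smaller incoming one modulo a diffuse projection.

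The main obstacle is the loss of one normal derivative generated by the nonlinear Prandtl amplification. After taking $\p^\m$, a term of the shape $\p_3 u_0^b\cdot(\text{polynomial in }v)\,\p^\m g_{R,\eps}$ enters through the $v\cdot\nabla g_0^b$-type interactions and calls for a $\p_3$ estimate on $g_{R,\eps}$, which the tangentially analytic $X$ norm does not control. To resolve this I would project the remainder equation onto $\N_\eps$ using the basis \eqref{def_chi_0_4}, read off the local conservation laws for the macroscopic coefficients $(a,b_1,b_2,b_3,c)$ in \eqref{def_a_b_c}, and use the mass and energy balances to rewrite the troublesome $\p_3 b_3$ as a combination of $\p_t(a,c)+\sum_{i=1,2}\p_i b_i$ plus microscopic commutators; this trades the lost normal derivative for a tangential one plus a time derivative, the latter being absorbed by re-using the equation itself. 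The microscopic piece $\P_\eps^\perp g_{R,\eps}$ only appears multiplied by the large hypocoercive factor $\eps^{-1}$ and is controlled by Cauchy--Schwarz.

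Finally, to handle the nonlinear term $\eps^{7/2}\Gamma_\eps(g_{R,\eps},g_{R,\eps})$ and the $\langle v\rangle^\gamma$ weights in $\nu_\eps$, I would run in parallel an $L^\infty$ bound on the weighted remainder $h_{R,\eps}=\sqrt{\mu_\eps/\mu_M}\,\omega_\beta g_{R,\eps}$ defined in \eqref{def_h}. Writing the transport equation for $h_{R,\eps}$ with damping $\eps^{-1}\nu_\eps$, using Grad's splitting $\L_\eps=\nu_\eps-K_\eps$, integrating along characteristics, and treating \eqref{MBC} by Guo's stochastic-cycle argument (which works uniformly for $\alpha\in(0,1]$ bounded away from $0$) yields $\eps^3\|h_{R,\eps}\|_Y\lesssim 1+\|g_{R,\eps}\|_X$ after finitely many bounces, with the factor $\eps^3$ tracking the $\eps^{7/2}\Gamma_\eps(g_{R,\eps},g_{R,\eps})$ source. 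Coupling this with the $L^2$ estimate yields a Gronwall-type differential inequality for $\Phi(t)=\|g_{R,\eps}\|_X^2+\eps^6\|h_{R,\eps}\|_Y^2$ that closes on some $[0,T]$ independent of $\eps$; positivity $F_\eps\ge 0$ then follows from the smallness of the perturbation in the $\mu_M$-weighted sup norm.
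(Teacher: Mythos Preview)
Your proposal follows essentially the same route as the paper: recenter around $\mu_\eps$, run an $L^2$ analytic estimate with a shrinking radius $\sigma(t)$ to absorb tangential losses, use the macroscopic conservation laws \eqref{eqn_am_bm_cm} to trade the Prandtl-induced normal derivative for tangential ones (handling the residual $\p_t$ by integrating it into an auxiliary functional $\mathbb G_\m$ and re-using the momentum law), and close via the $L^\infty$ characteristics bound on $h_{R,\eps}$ coupled back through Gr\"onwall. Two points of precision worth correcting in your write-up: after recentering, the singular $O(\eps^{-1/2})$ interaction no longer appears as $\Gamma_\eps(\mathcal A_\eps,g_{R,\eps})$ but as the multiplicative term $\tfrac{(\p_t+\eps^{-1}v\cdot\nabla_x)\mu_\eps}{2\mu_\eps}\,g_{R,\eps}$ (absorbing the leading $\Gamma$-coupling into $\L_\eps$ is the whole point of recentering), and the boundary condition for $g_{R,\eps}$ is actually \emph{homogeneous} because $\mu_\eps|_{\p\Omega}=\mu$; also your schematic powers are off --- in the paper's normalization the remainder equation carries $\eps^{-3}\L_\eps$ and $\eps^{2}\Gamma_\eps(g_{R,\eps},g_{R,\eps})$.
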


\begin{remark}
  Theorem \ref{Main_theorem} indicates that 
  \begin{equation*}
    \sup_{t \in [0,T]} \| \tfrac{1}{\sqrt{\mu_\eps}} \big\{F_\eps - \mu - \eps (\rho_0 + v \cdot u_0  +\tfrac{|v|^2 -3}{2} \theta_0 ) \mu - \eps (\rho^b_0 +v \cdot u^b_0  +\tfrac{|v|^2 -3}{2} \theta^b_0 ) \mu\big\} \|_2 \leq C \eps^\frac{3}{2} \rightarrow 0 \,.
  \end{equation*} 
This justifies the incompressible Euler limit for the shear flow from the Boltzmann equation in the half-space.
\end{remark}
\begin{remark}
    For general $q > 1$, a formal derivation will be outlined in Section \ref{Sec_general_q}. By truncating the ansatz at a suitable order, the same method as in Section \ref{Sec_remainder_uniform} can be applied to obtain uniform estimates for the remainder term, thereby justifying the incompressible Euler limit for general $q > 1$. While the computations are slightly more involved, they remain analogous to the case $q = 2$. For simplicity, we focus primarily on the case $q = 2$ in this paper.
\end{remark}

\subsection{Sketch of ideas and novelties}\label{subsec_idea} We employ the Hilbert expansion method to construct an approximate solution to the scaled Boltzmann equation \eqref{BE} under the incompressible Euler scaling. Due to the Maxwell reflection boundary condition, coupled viscous-kinetic boundary layers (i.e. Prandtl and Knudsen layers) arise. When $q = 2$, the thickness of the Prandtl layer is $O(\sqrt{\eps})$, while that of the Knudsen layer is $O(\eps^2)$. Motivated by Caflisch \cite{Caflish-1980-CPAM}, we truncate the Hilbert expansion at a suitable order. As a result, the expansion takes the form
  \begin{equation*}
      F_\eps (t,x,v) = \mu +\sum_{k=0}^{13} \sqrt{\mu}(\sqrt{\eps}^k g_k +\sqrt{\eps}^k g^{b}_k + \sqrt{\eps}^k g^{bb}_k) + \sqrt{\mu} \sqrt{\eps}^8 \tilde{g}_{R,\eps}   \,,
  \end{equation*}
with the remainder equation
\begin{equation}\label{idea_tilde_g}
  \begin{aligned}
    \p_t \tilde{g}_{R, \eps} + \tfrac{1}{\eps} v \cdot \nabla_x \tilde{g}_{R, \eps} & +  \tfrac{1}{\eps^3} \L \tilde{g}_{R,\eps} = \eps^{2} \Gamma(\tilde{g}_{R,\eps}, \tilde{g}_{R,\eps})  \\
   & + \tfrac{1}{\eps^2}   [\Gamma(\tilde{g}_{R,\eps}, g_0 + g^b_{0}) + \Gamma( g_0 + g^b_{0}, \tilde{g}_{R,\eps})] + \textrm{s.o.t.}\,,
  \end{aligned}
\end{equation} 
where and hereafter s.o.t. refers to some other terms that are straightforward to be controlled. Here, $g_0 = (\rho_0 + u_0 \cdot v + \theta_0 \tfrac{|v|^2 -3}{2} )\sqrt{\mu}$, where $(\rho_0 , u_0 , \theta_0)$ solves the incompressible Euler system \eqref{incompressible_Euler_0}, and $g^b_0 = (\rho^b_0 + u^b_0 \cdot v + \theta^b_0 \tfrac{|v|^2 -3}{2} )\sqrt{\mu}$, where $(\rho^b_0 , u^b_0 , \theta^b_0)$ solves the incompressible Prandtl system \eqref{nonlinear_Prandtl}. To obtain the uniform estimate on the remainder equation \eqref{idea_tilde_g}, the most difficult term is $$\tfrac{1}{\eps^2}   [\Gamma(\tilde{g}_{R,\eps}, g_0 + g^b_{0}) + \Gamma( g_0 + g^b_{0}, \tilde{g}_{R,\eps})]\,.$$ 
We remark that this singular term does not appear in the previous approaches. In \cite{Jang-Kim}, there is no Prandtl layer term $g^b_0$ since they worked in incompressible Navier-Stokes asymptotics. In \cite{Huang-Wang-Wang-Xiao-2025}, the specular reflection was considered. For this boundary condition, the linear Prandtl layer appears in higher order. 

This singular term is treated in the following way: by performing the $L^2$ estimate on \eqref{idea_tilde_g}, we obtain
\begin{equation*}
  \tfrac{1}{\eps^2}   [\Gamma(\tilde{g}_{R,\eps}, g_0 + g^b_{0}) + \Gamma( g_0 + g^b_{0}, \tilde{g}_{R,\eps})] \tilde{g}_{R,\eps} \leq \tfrac{\delta}{\eps^3}\|\P^\perp \tilde{g}_{R,\eps} \|_\nu^2 + \tfrac{C_\delta}{\eps} \| \tilde{g}_{R,\eps} \|^2_\nu \|g_0 + g^b_0 \|_\nu^2\,.
\end{equation*}
Thanks to the coercivity of the linearized collision operator $\L$, the first term on the right-hand side can be absorbed into the left-hand side by choosing $\delta$ sufficiently small. However, the second is of order $O(\tfrac{1}{\eps})$ and thus singular. It is also emphasized that for different values of $q>1$, this singularity behaves like $O(\tfrac{1}{\eps^{q - 1}})$. Motivated by \cite{De Masi-Esposito-Lebowitz-1989-CPAM}, we  introduce $(\rho_\eps, u_\eps, \theta_\eps) = (1 + \eps(\rho_0 + \rho^b_0), \eps (u_0 + u^b_0), 1 + \eps (\theta_0 + \theta^b_0) )$ and the local Maxwellian
\begin{equation*} 
  \mu_\eps(v) = \tfrac{\rho_\eps}{(2 \pi \theta_\eps )^{\frac{3}{2  }}}e^{-\frac{|v - u_\eps|^2}{2 \theta_\eps}} \,.
\end{equation*}  
By introducing the collision operators $\L_\eps$ and $\Gamma_\eps$ around $\mu_\eps$, it follows that
\begin{equation*}
 \tfrac{1}{\eps^3} \L_\eps (\sqrt{\tfrac{\mu}{\mu_\eps}} \tilde{g}_{R,\eps}) \sim \tfrac{1}{\eps^3} \L \tilde{g}_{R,\eps} - \tfrac{1}{\eps^2}[\Gamma(\tilde{g}_{R,\eps}, g_0 + g^b_{0}) + \Gamma( g_0 + g^b_{0}, \tilde{g}_{R,\eps})] + \textrm{s.o.t.} \,.
\end{equation*}
This implies that the singular term can be absorbed into $\L_\eps$. Defining $g_{R,\eps} = \sqrt{\tfrac{\mu}{\mu_\eps}} \tilde{g}_{R,\eps}$, the equation for $g_{R,\eps}$ is then expressed as follows
\begin{equation}
 \begin{aligned}
   \p_t g_{R,\eps} + \frac{1}{\eps} v \cdot \nabla_x g_{R,\eps} + \tfrac{1}{\eps^3} \L_\eps g_{R,\eps} + \tfrac{(\p_t + \frac{1}{\eps} v \cdot \nabla_x ) {\mu_\eps}}{2 \mu_\eps} g_{R,\eps}  = \eps^2 \Gamma_\eps (g_{R,\eps}, g_{R,\eps})+ \textrm{s.o.t.}\,.
 \end{aligned}
\end{equation} 
However, due to the presence of the Prandtl layer, the normal derivatives  $\p_3(\rho^b_0, u^b_0, \theta^b_0) $ exhibit singular behavior of order $O(\tfrac{1}{\sqrt{\eps}})$. Consequently, the term $\tfrac{1}{\eps} v \cdot \nabla_x \mu_\eps \sim O(\tfrac{1}{\sqrt{\eps}})$. The $L^2$ estimate indicates that
\begin{equation*}
  \int_{\Omega \times \R^3} \tfrac{(\p_t + \tfrac{1}{\eps} v \cdot \nabla_x) \mu_\eps}{2 \mu_\eps} g_{R,\eps}^2 \d x \d v = \tfrac{1}{\eps} \int_\Omega \sum_{i=1}^{2}\p_3 u_{\eps, i} b_3 b_i + \p_3(\rho_\eps + \theta_\eps) b_3 (a + c) + \p_3 \theta_\eps b_3 c \d x + \textrm{s.o.t.} \,,
\end{equation*}
where $(a,b,c)$ are the macroscopic quantities of $g_{R,\eps}$. The conservation laws of the Boltzmann equation imply that
  \begin{equation}\label{idea_conservation}
    \begin{cases}
      &\eps \p_t a  + \sqrt{\theta_\eps} \nabla_x \cdot b  = \textrm{s.o.t.} \,,\\
      &\eps \p_t b  + \sqrt{\theta_\eps} \nabla_x (a  + c ) = - \nabla_x \cdot (u_\eps \otimes b )  -\nabla_x \cdot \int_{\R^3}  \theta_\eps^{-\frac{1}{4}} \A(v_\eps) \P_\eps^\perp g  \d v +\textrm{s.o.t.} \,,\\
      &\eps \p_t c  + \tfrac{2}{3}\sqrt{\theta_\eps} \nabla_x \cdot b  =  - \tfrac{2}{3} \nabla_x \cdot \int_{\R^3} \theta_\eps^{-\frac{1}{4}} \B(v_\eps) \P_\eps^\perp g  \d v + \textrm{s.o.t.} \,.\\
    \end{cases}
  \end{equation}
We explain below our major new idea, which is based on the key observation that the singularity can be transferred into a loss of tangential derivatives. Importantly, this observation does not entail any loss of the time derivative. Hence, the solution is required to be analytic only in the tangential spatial variables, not in time. This represents a significant improvement compared to \cite{Kim-Nguyen-2024-arXiv}. In what follows, we only consider the term $\tfrac{1}{\eps} \int_{\Omega} \p_3 u_{\eps, i} b_3 b_i \d x \ (i=1,2)$. Noticing that $ \tfrac{1}{\eps}x_3 \p_3 (\rho_\eps , u_\eps , \theta_\eps) = O(1) $ and $b_3(t, \bar{x} ,0) = 0$, we have by $\eqref{idea_conservation}_1$,
\begin{equation*}
  \begin{aligned}
    \tfrac{1}{\eps}\int_{\Omega} \p_3 u_{\eps,i} b_3 b_i \d x =&  \tfrac{1}{\eps}\int_{\Omega} \p_3 u_{\eps,i} \int_{0}^{x_3} \p_3 b_3 \d x^\prime_3 b_i \d x \\
    = &- \tfrac{1}{\eps}\int_{\Omega} \p_3 u_{\eps,i} \int_{0}^{x_3} (\eps \p_t a + \sum_{j=1}^{2}\p_j b_j )\d x^\prime_3 b_i \d x + \textrm{s.o.t.} \\
    = &- \tfrac{1}{\eps}\int_{\Omega} \p_3 u_{\eps,i} \int_{0}^{x_3} (\eps \p_t a )\d x^\prime_3 b_i \d x - \tfrac{1}{\eps}\int_{\Omega} x_3 \p_3 u_{\eps,i} \tfrac{\int_{0}^{x_3} (\sum_{j=1}^{2}\p_j b_j )\d x^\prime_3}{x_3} b_i \d x + \textrm{s.o.t.}  \\
    \leq &- \tfrac{1}{\eps}\int_{\Omega} \p_3 u_{\eps,i} \int_{0}^{x_3} (\eps \p_t a )\d x^\prime_3 b_i \d x + C \sum_{j=1}^{2}\| \p_j b_j \|_2 \|b_i \|_2 + \textrm{s.o.t.}  \,, \\
  \end{aligned}
\end{equation*} 
where we used the Hardy inequality. The second term loss one order tangential derivative. To get rid of the loss of time derivative , we use the equality $\p_t f  g = - \p_t g f + \tfrac{\d}{\d t} (fg)$ and $\eqref{idea_conservation}_2$ to derive
    \begin{equation*}
  \begin{aligned}
   - \tfrac{1}{\eps}\int_{\Omega} \p_3 u_{\eps,i} \int_{0}^{x_3} (\eps \p_t a )\d x^\prime_3 b_i \d x   
   = & \tfrac{1}{\eps}\int_{\Omega} \p_3 u_{\eps,i} \int_{0}^{x_3} (\eps a )\d x^\prime_3 \p_t b_i \d x  + \textrm{s.o.t.}\\
    = & \tfrac{1}{\eps}\int_{\Omega} \p_3 u_{\eps,i} \int_{0}^{x_3}  a \d x^\prime_3 ( - \p_i (a + c) - \div \int_{\R^3}  \theta_\eps^{-\frac{1}{4}} \A(v_\eps) \P_\eps^\perp g  \d v ) \d x  \\
    \leq & C \|a \|_2 \|\p_i (a+c) \|_2 + \sum_{j=1}^{2} \|a\|_2 \|\p_j \P^\perp_\eps g_{R,\eps} \|_2 \\
    &+ \tfrac{1}{\eps}\int_{\Omega} \p_3 u_{\eps,i} \int_{0}^{x_3}  a \d x^\prime_3 ( - \p_3 \int_{\R^3}  \theta_\eps^{-\frac{1}{4}} \A(v_\eps) \P_\eps^\perp g  \d v ) \d x + \textrm{s.o.t.} \,,
  \end{aligned}
\end{equation*} 
where the Hardy inequality has been used again. By performing integration by parts, it follows that
\begin{equation*}
  \begin{aligned}
     & \tfrac{1}{\eps}\int_{\Omega} \p_3 u_{\eps,i} \int_{0}^{x_3}  a \d x^\prime_3 ( - \p_3 \int_{\R^3}  \theta_\eps^{-\frac{1}{4}} \A(v_\eps) \P_\eps^\perp g_{R,\eps}  \d v  ) \d x \\
    =& \tfrac{1}{\eps}\int_{\Omega} \p_3 u_{\eps,i} a ( \int_{\R^3}  \theta_\eps^{-\frac{1}{4}} \A(v_\eps) \P_\eps^\perp g_{R,\eps}  \d v  ) \d x  
    \leq \tfrac{C}{\sqrt{\eps}} \|a\|_2 \| \P^\perp g_{R,\eps}\|_\nu \,,
  \end{aligned}
\end{equation*}
which can be controlled since $\|\P^\perp_\eps g_{R,\eps} \|^2_\nu = O(\tfrac{1}{\eps^3}) $. As a result,
\begin{equation*}
  \tfrac{1}{\eps}\int_{\Omega} \p_3 u_{\eps,i} b_3 b_i \d x \leq C \| g_{R,\eps}\|_2(\sum_{j=1}^{2} \|\p_j g_{R,\eps} \|_2) +  \textrm{s.o.t.} \,.
\end{equation*}
This implies a loss of one order in tangential spatial derivatives. Thus, the energy estimates can be closed by working within analytic function spaces. Moreover, the nonlinear term $\varepsilon^2 \Gamma_\varepsilon(g_{R,\varepsilon}, g_{R,\varepsilon})$ can be estimated using the standard $L^2\mbox{-}L^\infty$ framework developed in \cite{Guo-2010-ARMA, Guo-Jang-Jiang-2009-KRM, Guo-Jang-Jiang-2010-CPAM}. 

Finally, we make some remarks on the non-shear flow case. In this setting, the above analysis remains valid in principle. Because of the non-commutativity the operator $\L_\eps$ and tangential derivatives, some further new estimates on the corresponding commutator will be needed. In particular, the term $\tfrac{1}{\eps^3} [\L_\eps , \p_i] g $ is highly singular. For simplicity, we only focus on the shear flow in the current paper. The incompressible Euler limit for general non-shear flow case will be treated in a separate paper.

\subsection{Organization of this paper}
The paper is organized as follows. In Section \ref{sec_Hilbert}, we present a formal analysis, where we derive the incompressible Euler and Prandtl systems, along with the corresponding asymptotic expansions. Section \ref{Sec_solution_expansion} is devoted to establishing uniform estimates for the solutions of the constructed expansions. In Section \ref{Sec_remainder_uniform}, we obtain uniform estimates for the remainder term within the analytic $L^2\mbox{-}L^\infty$ framework. Finally, in Section \ref{Sec_general_q}, we outline the formal analysis for the case of a general scaling parameter $q>1$.

\section{Hilbert expansion}\label{sec_Hilbert}
In this section, we employ the Hilbert expansion method to formally derive the incompressible Euler system from the Boltzmann equation \eqref{BE}–\eqref{MBC} in the limit $\varepsilon \to 0$, under the scaling parameter $q = 2$. This derivation is based on analyzing the fluctuations of the distribution function $F_\eps$ around a global Maxwellian. More precisely, we take 
\begin{equation}\label{fluctuation}
	F_\eps = \mu + \eps \sqrt{\mu} g_\eps\,,
\end{equation}
where $\mu$ is defined in \eqref{def_mu}. From plugging \eqref{fluctuation} into \eqref{BE}-\eqref{MBC}, we obtain
\begin{equation}\label{BE_g_eps}
	\begin{cases}
    \eps \p_t g_\eps + v \cdot \nabla_x g_\eps +\tfrac{1}{\eps^2}\L g_\eps=\tfrac{1}{\eps}\Gamma(g_\eps,g_\eps) \quad \text{in} \quad \! \R_+ \times \Omega \times \R^3\,, \\
    g_\eps(0,x,v) = g^{in}_\eps (x,v) \quad \text{in}  \quad \! \Omega \times \R^3\,,
  \end{cases}
\end{equation}
with the Maxwell reflection boundary condition 
\begin{equation}\label{MBC_g_eps}
    \gamma_- g_\eps (t,x,v) = (1 - \alpha) g_\eps (t,x,R_x  v)+ \alpha P_\gamma g(t,x,v) \,,
\end{equation}
where we used the notation $R_x v = v - 2 (v \cdot n) n = (\bar{v}, -v_3)$, and
\begin{equation}\label{def_P_gamma}
  P_\gamma g(t,x,v)=\sqrt{2\pi \mu(v)}\int_{v^\prime\cdot n>0}(v^\prime\cdot n)\sqrt{\mu(v^\prime)} g(t,x,v^\prime)\d v^\prime.
\end{equation}

As will be explained below, our expansion has the form
\begin{equation}\label{Hilbert-Expnd-Form}
  \begin{aligned}
     g_\eps (t, x , v) = & \sum_{k\geq 0} \sqrt{\eps}^k \big\{ g_k (t,x,v) + g_k^b (t, \bar{x}, \tfrac{x_3}{\sqrt{\eps}}, v) \big\} \\
     &+ \sum_{k\geq 2} \sqrt{\eps}^k  g_k^{bb} (t, \bar{x}, \tfrac{x_3}{\eps^2}, v) \,,
  \end{aligned}
\end{equation}
where $g_k (t,x,v)$, $ g_k^b (t, \bar{x}, \tfrac{x_3}{\sqrt{\eps}}, v)$ and $g_k^{bb} (t, \bar{x}, \tfrac{x_3}{\eps^2}, v)$ refer to the interior part, the Prandtl layer part, and the Knudsen layer part, respectively.
\subsection{Interior expansion}
By the famous von Karman relation, the Reynolds number equal to the ratio of the Mach number to the Knudsen number, i.e.,
\begin{equation*}
  \mathrm{Re}= \tfrac{\mathrm{Ma}}{\mathrm{Kn}} = \tfrac{1}{\eps}\,.
\end{equation*}
The thickness of the Prandtl layer is $ O( \tfrac{1}{\sqrt{\mathrm{Re}}})= O(\sqrt{\eps})$, and the thickness of the Knudsen layer is $O(\mathrm{Kn})=O(\eps^2)$. Therefore, our expansion in interior takes the form
\begin{equation}\label{expansion_interior}
  \begin{aligned}
    g_\eps (t, x, v) \thicksim \sum_{k \geq 0} \sqrt{\eps}^k g_k (t, x, v) \,.
  \end{aligned}
\end{equation}
Plugging \eqref{expansion_interior} into \eqref{BE_g_eps} and collecting the same orders,
\begin{equation}\label{Order_Anal_Interior}
\begin{aligned}
\sqrt{\eps}^{-4}:& \quad \L g_0 = 0\,,\\
\sqrt{\eps}^{-3}:& \quad \L g_1 = 0\,,\\
\sqrt{\eps}^{-2}:& \quad \L g_2 = \Gamma(g_0,g_0)\,,\\
\sqrt{\eps}^{-1}:& \quad \L g_3 = \Gamma(g_0,g_1) + \Gamma(g_1,g_0)\,,\\
\sqrt{\eps}^{0}:& \quad v \cdot \nabla_x g_0 + \L g_4 = \Gamma(g_0,g_2) + \Gamma(g_2,g_0) + \Gamma(g_1,g_1)\,,\\
\cdots \cdots &\\
\sqrt{\eps}^k:& \quad \p_t g_{k-2} + v \cdot \nabla_x g_k + \L g_{k+4} = \sum_{\substack{i+j=k+2\,,\\ i, j\ge 0}} \Gamma(g_i, g_j)\,.
\end{aligned}
\end{equation}
We denote
\begin{equation*}
  \P g_k = \big(\rho_k + u_k \cdot v + \theta_k \tfrac{|v|^2-3}{2}\big)\sqrt{\mu} := (\sqrt{\mu},v\sqrt{\mu}, \tfrac{|v|^2-3}{2}\sqrt{\mu})\cdot U_k\,,
\end{equation*}
where $ (\rho_k, u_k, \theta_k)$ represent the macroscopic density, bulk velocity and temperature, respectively. Then the equation of $O(\sqrt{\eps}^{-4})$ and {\em H}-theorem implies that $g_0$ must be an infinitesimal Maxwellian:
\begin{equation}\label{def_g0}
g_0 (t, x, v)  = I_0(U_0) : = \big(\rho_0(t, x) + v \cdot u_0(t,x) +( \tfrac{|v|^2-3}{2}) \theta_0(t,x) \big) \sqrt{\mu}\,,
\end{equation}
where $U_0 = (\rho_0,u_0,\theta_0)$ ($U_1,U^b_1,\cdots$, that appears later are defined in the same way).\\
The contribution of $O(\sqrt{\eps}^{-3})$ gives 
\begin{equation}\label{def_g1}
g_1 (t, x, v)  = I_0(U_1) : = \big(\rho_1(t, x) + v \cdot u_1(t, x) + \tfrac{|v|^2-3}{2} \theta_1(t, x) \big) \sqrt{\mu}\,.
\end{equation}
The contribution of $O(\sqrt{\eps}^{-2})$ gives 
\begin{equation}\label{def_g2}
  g_2=I_0(U_2) + I_1(U_0) : = \big(\rho_2(t, x) + v \cdot u_2(t, x) + \tfrac{|v|^2-3}{2} \theta_2(t, x) \big) \sqrt{\mu} + \tfrac{1}{2}\big(\A : u_0 \otimes u_0 + 2 \B \cdot u_0 \theta_0 + C \theta_0^2 \big)\,,
\end{equation}
where $\A \in \R^{3\times 3}, \B\in \R^3$ and $\C \in \R$ are the Burnett functions defined in subsection \ref{subsec_notations} and we have used the fact that (see \cite{BGL1})
\begin{equation}\label{Gamma_f_g}
	\begin{aligned}
		\L^{-1} \{\Gamma(\P f, \P g) + \Gamma(\P g,\P f)\} =& \P^\perp \{\tfrac{\P f \cdot \P g}{\sqrt{\mu}}\}\,\\
		=&\P^\perp \big\{ \big(\rho_f + v \cdot u_f + \tfrac{|v|^2-3}{2} \theta_f \big) \big(\rho_g + v \cdot u_g + \tfrac{|v|^2-3}{2} \theta_g \big) \sqrt{\mu} \big\}\,,\\
		=&  \A : u_f \otimes u_g +  \B \cdot (u_f \theta_g + u_f \theta_g )+ \C \theta_f \theta_g\,.
	\end{aligned}
\end{equation}
Here, $(\rho_f,u_f,\theta_f)$ represent the macroscopic quantities of the function $f$.

The contribution of $O(\sqrt{\eps}^{-1})$ gives 
\begin{equation}\label{def_g3}
  g_3=I_0(U_3) + \big(\A : u_0 \otimes u_1 +  \B \cdot (u_0 \theta_1 + u_1 \theta_0 )+ \C \theta_0 \theta_1 \big)
\end{equation}
The solvability condition of $O(1)$ is
 \begin{equation*}
  v \cdot \nabla_x g_0 \in \mathcal{N}^\perp\,.
 \end{equation*}
 By simple calculations and using \eqref{def_g0},
 \begin{equation*}
	v \cdot \nabla_x g_0 =(1,v,\tfrac{|v|^2-3}{2})\sqrt{\mu}\mathscr{A}U_0 + \A : \nabla_x u_0 + \B \cdot \nabla_x \theta_0\,,
 \end{equation*}
 where $\mathscr{A}$ is the acoustic operator defined by
 \begin{equation}
  \mathscr{A}\begin{pmatrix}
    \rho \\
     u\\
    \theta 
    \end{pmatrix}
    =\begin{pmatrix}
    \operatorname{div} u  \\
    \nabla_x (\rho+ \theta ) \\
    \tfrac{2}{3}\operatorname{div} u
    \end{pmatrix}\,.
 \end{equation}
Since $\A,\B \in \mathcal{N}^\perp$, we must have
 \begin{equation}\label{Boussinesq_incompressibility_0}
  \operatorname{div} u_0 =0\,, \quad \nabla_x(\rho_0 + \theta_0 ) =0\,.
 \end{equation}
 This represents the incompressibility condition and the Boussinesq relation. Integrability requires that $\rho_0 + \theta_0 \equiv 0$ (for cases in a bounded domain, the Boussinesq relation should be considered alongside the conservation law of the Boltzmann equation to determine the value of $\rho_0 + \theta_0$, see \cite{Sone-2002book,Sone-2007-Book,Takata-2012-JSP}).

The solvability condition of $O(\sqrt{\eps}^{2})$ is
\begin{equation*}
	\p_t g_0 + v \cdot \nabla_x g_2 \in \mathcal{N}^\perp\,.
\end{equation*}
By simple calculations, we have
 \begin{equation}\label{incompressible_Euler_0}
  \begin{cases}
    \p_t \rho_0 + \operatorname{div} u_2 =0\,,\\
    \p_t u_0 + \nabla_x (\rho_2 + \theta_2) + \operatorname{div} (u_0 \otimes u_0) -\tfrac{1}{3} \nabla_x |u_0|^2 =0 \,,\\
    \p_t \theta_0 + \tfrac{2}{3} \operatorname{div} u_2 + \tfrac{5}{3}\operatorname{div} (u_0 \theta_0 )=0\,.
  \end{cases}
 \end{equation}
 By a simple cancellation and using \eqref{Boussinesq_incompressibility_0}, we know that $(\rho_0, u_0 , \theta_0)$ satisfy the incompressible Euler system:
 \begin{equation}
  \begin{cases}
    \rho_0 + \theta_0 = 0 \,,\\
    \operatorname{div} u_0 = 0 \,,\\
    \p_t u_0 + u_0 \cdot \nabla_x u_0 + \nabla_x p_0 =0\,,\\
    \p_t \theta_0 + u_0 \cdot \nabla \theta_0 = 0\,, 
  \end{cases}
 \end{equation}
 where  
 \begin{equation}\label{def_p0}
  p_0(t,x)= (\rho_2 +\theta_2) - \tfrac{1}{3}|u_0|^2
 \end{equation}
 is the pressure. Moreover, we can obtain the incompressibility condition and Boussinesq relation for $(u_2, \theta_2)$ from \eqref{incompressible_Euler_0}:
 \begin{equation}\label{Boussinesq_incompressibility_2}
  \operatorname{div} u_2 = -\p_t \rho_0\,, \quad \nabla_x (\rho_2 + \theta_2 - \tfrac{1}{3} |u_0|^2) =0\,.
 \end{equation}
 
 Next, we derive the system for $(\rho_k, u_k, \theta_k) \ (k \geq 1)$. Indeed, from \eqref{Order_Anal_Interior}, we have
 \begin{equation}
	\L g_{k+2} = - \p_t g_{k-4} - v \cdot \nabla g_{k-2} + \sum_{\substack{i+j=k\,,\\ i, j\ge 0}} \Gamma(g_i, g_j)\\
	=\Gamma (\P g_k, g_0) + \Gamma (g_0, \P g_k) + F_{k-1}\,,
 \end{equation}
 where 
 \begin{equation}
	F_{k-1} = - \p_t g_{k-4} - v \cdot \nabla_x g_{k-2} + \sum_{\substack{i+j=k\,,\\ i, j\ge 1}} \Gamma(g_i, g_j) + \Gamma (\P^\perp g_k, g_0) + \Gamma (g_0, \P^\perp g_k)
 \end{equation}
 depends only on $(\rho_j, u_j, \theta_j) \ (j\leq k-1)$. Multiplying \eqref{Order_Anal_Interior} with $O(\sqrt{\eps}^{k+2})$ by $\sqrt{\mu},v \sqrt{\mu} ,\tfrac{|v|^2-3}{2}\sqrt{\mu}$, one has
 \begin{equation}\label{linear_Euler}
	\begin{cases}
		\p_t \rho_k + \operatorname{div} u_{k+2} =0\,,\\
		\p_t u_k + \operatorname{div} ( u_k \otimes u_0 + u_0 \otimes u_k) + \nabla_x (\rho_{k+2} + \theta_{k+2}) - \tfrac{2}{3}\nabla_x (u_0 \cdot u_k) = \mathcal{F}_{k-1} \,,\\
		\tfrac{3}{2}\p_t \theta_k + \operatorname{div} u_{k+2} + \tfrac{5}{2} \operatorname{div} (u_k \theta_0 + u_0 \theta_k) = \mathcal{G}_{k-1}\,,
	\end{cases}
 \end{equation}
 where 
 \begin{equation}
	\begin{aligned}
		\mathcal{F}_{k-1} &= - \div  \int_{\R^3} \A \L^{-1} F_{k-1} \d v\,,\\
		\mathcal{G}_{k-1} &= - \div  \int_{\R^3} \B \L^{-1} F_{k-1} \d v\,.\\
	\end{aligned}
 \end{equation}
 Once $(\rho_{k}, u_{k}, \theta_{k})$ are determined, the incompressibility and the Boussinesq relation of $(\rho_{k+2},\\ u_{k+2}, \theta_{k+2})$ can be derived. Together with \eqref{linear_Euler} for $k+2$, $(\rho_{k+2}, u_{k+2}, \theta_{k+2})$ can be completely solved, inductively.
 
 We further impose the initial data on $(\rho_k,u_k,\theta_k)$ (this can be realized by setting the special form of the initial data of the Boltzmann equation \eqref{BE}):
\begin{equation}\label{IC_incompressible_Euler}
  \begin{aligned}
   (\rho_0, u_0, \theta_0) (0, x) = (\rho_0^{in}, u^{in}_0, \theta_0^{in}) (x_3)\,,\quad  (\rho_k, u_k, \theta_k) (0, x) = (\rho_k^{in}, u^{in}_k, \theta_k^{in}) (x)\,,\quad k=1,2\cdots \,.
  \end{aligned}
\end{equation}
This choice implies that the leading-order term corresponds to a shear flow.

\subsection{Viscous boundary layer expansion}
Obviously, $g_0$ in \eqref{def_g0} matches the Maxwell reflection boundary condition \eqref{MBC} if and only if 
\begin{equation*}
  u_0^0=0\,,\quad \theta_0^0 =0\,.
\end{equation*}
We recall that the superscript $0$ indicates the value on the boundary. However, the constraints above are too many as the boundary conditions for the incompressible Euler system. Therefore, the boundary layer correction is naturally needed. Since the thickness of the viscous layer is $O(\sqrt{\eps})$, the expansion inside the viscous layer has the form
\begin{equation}
  \begin{aligned}
    g^b_\eps (t, \bar{x}, \zeta, v) \thicksim \sum_{k \geq 0} \sqrt{\eps}^k g^b_k (t, \bar{x}, \zeta, v) \,,
  \end{aligned}
\end{equation}
where $\zeta = \tfrac{x_3}{\sqrt{\eps}} $ is the scaled normal coordinate. Throughout our paper, the far-field condition is always assumed:
\begin{equation}\label{viscous_far_field}
  \begin{aligned}
    g^b_k (t, \bar{x}, \zeta , v) \to 0 \,, \quad \textrm{as} \quad\! \zeta \to + \infty \,.
  \end{aligned}
\end{equation}
Plugging $g_\eps + g^b_\eps$ into the Boltzmann equation \eqref{BE_g_eps} and rearrange by the order of $\eps$, we have
\begin{align}\label{Order_Anal_Viscous}
\no \sqrt{\eps}^{-4}:\quad & \L g^b_0=  0\,,\\
\no \sqrt{\eps}^{-3}:\quad & \L g^b_1=  0\,,\\
\no \sqrt{\eps}^{-2}:\quad & \L g^b_2=  \Gamma (g_0^0, g^b_0) + \Gamma (g_0^b, g^0_0) + \Gamma(g^b_0,g^b_0)\,,\\
\no \sqrt{\eps}^{-1}:\quad & v_3 \cdot \p_\zeta g^b_0 + \L g^b_3= \big[ \Gamma(g^b_0, g^b_1 + g^0_1) +\Gamma( g^b_1 + g^0_1, g^b_0)\big] + \big[\Gamma(g^0_0, g^b_1) + \Gamma(g^b_1, g^0_0)\big]  \\
\no & \qquad \qquad  + \zeta \big[ \Gamma(g_0^{(1)}, g^b_0) +  \Gamma(g^b_0, g_0^{(1)})\big] \,,\\
\no & \cdots \cdots\\
 \sqrt{\eps}^k: \quad & \p_t g^b_{k-2} + \bar{v} \cdot \nabla_{\bar{x}} g^b_k + v_3 \cdot \p_\zeta g^b_{k+1} + \L g^b_{k+4} \\
\no & \qquad = \sum_{\substack{i+j=k+2\,,\\ i,j \ge 0 }} \Gamma(g^b_i, g^b_j) + \sum_{\substack{i+j=k+2\,,\\ i,j \ge 0 }} \Big[ \Gamma(g^0_i, g^b_j) + \Gamma(g^b_j, g^0_i) \Big]\\
\no & \qquad \quad + \sum_{\substack{i+l+j=k+2\,,\\ 1\le l \le N\,, i,j \ge 0}} \tfrac{\zeta^l}{l!} \Big[ \Gamma(g^{(l)}_i, g^b_j) + \Gamma(g^b_j, g^{(l)}_i) \Big] \,,
\end{align}
where the Taylor expansion at $x_3 = 0$ is used:
\begin{equation*}
  \begin{aligned}
    g_i = g_i^0 + \sum_{1 \leq l \leq N} \tfrac{\zeta^l}{l !} g_i^{(l)} + \tfrac{\zeta^{N+1}}{(N+1) !} \widetilde{g}_i^{(N+1)} \,.
  \end{aligned}
\end{equation*}
Here, the number $N \in \mathbb{N}_+$ will be chosen later. We define
\begin{equation}\label{pb_k}
  \begin{aligned}
    p^b_k = \rho^b_k + \theta^b_k \,.
  \end{aligned}
\end{equation}
Similar to the analysis in the interior, we have
\begin{equation}\label{def_gb0}
  g^b_0= B(U^b_0) :=  \big( \rho^b_0 + u^b_0 \cdot v + \theta^b_0 \tfrac{|v|^2 -3}{2}\big)\sqrt{\mu}\,,
\end{equation}
\begin{equation}\label{def_gb1}
  g^b_1= B(U^b_1) :=  \big( \rho^b_1 + u^b_1 \cdot v + \theta^b_1 \tfrac{|v|^2 -3}{2}\big)\sqrt{\mu}\,,
\end{equation}
\begin{equation}\label{def_gb2}
  \begin{aligned}
    g^b_2=& B(U^b_2) + \big( \A : u^0_0 \otimes u^b_0 +  \B (u^0_0 \theta^b_0 + u^b_0 \theta^0_0 )+ \C \theta^0_0 \theta^b_1 \big)\\
    &+ \tfrac{1}{2}\big( \A : u^b_0 \otimes u^b_0 + 2 \B u^b_0 \theta^b_0 + \C (\theta^b_0)^2 \big)\,.
  \end{aligned}
\end{equation}
The contribution of $O(\sqrt{\eps}^{-1})$ gives
\begin{equation}
  v_3 \partial_\zeta g^b_0 \in \mathcal{N}^\perp  \,.
\end{equation}
That is 
\begin{equation}
  \partial_\zeta (\rho^b_0 + \theta^b_0) v_3 \sqrt{\mu} + \partial_\zeta u^b_{0,3} \tfrac{|v|^2}{3} \sqrt{\mu} + \partial_\zeta \theta^b_0 \B_3 + \sum_{j=1}^{3} \A_{3j} \partial_\zeta u^b_{0,j} \in \mathcal{N}^\perp\,,
\end{equation}
which further implies
\begin{equation}
  \partial_\zeta (\rho^b_0 + \theta^b_0)=0\,, \partial_\zeta u^b_{0,3}=0\,.
\end{equation}
Together with the far-field condition \eqref{viscous_far_field}, we obtain
\begin{equation}\label{pb0_ub03}
 p^b_0 (t,\bar{x},\zeta)= (\rho^b_0 + \theta^b_0)(t,\bar{x},\zeta) \equiv 0\,,\quad  u^b_{0,3}(t,\bar{x},\zeta) \equiv 0\,.
\end{equation}
Moreover, we obtain that
\begin{equation}\label{def_gb3}
 \begin{aligned}
   g^b_3 = &B(U^b_3) + \L^{-1} \big\{ - \P^\perp (v_3 \p_\zeta g^b_0) + \big[ \Gamma(g^b_0, g^b_1 + g^0_1) +\Gamma( g^b_1 + g^0_1, g^b_0)\big] + \big[\Gamma(g^0_0, g^b_1) + \Gamma(g^b_1, g^0_0)\big]  \\
    &+ \zeta \big[ \Gamma(g_0^{(1)}, g^b_0) +  \Gamma(g^b_0, g_0^{(1)})\big] \big\} \,,\\
   = & B(U^b_3) -\sum_{j=1}^{3} \hat{\A}_{3j} \p_\zeta u^b_{0,j} - \hat{\B}_3 \p_\zeta \theta^b_0 \\
   &+ \big( \A : u^b_0 \otimes (u^b_1 + u^0_1 + \zeta u^{(1)}_0 )+  \B (u^b_0 (\theta^b_1 + \theta^0_1 + \zeta \theta^{(1)}_0 ) + \theta^b_0 (u^b_1 + u^0_1 + \zeta u^{(1)}_0) )\\
   &+ \C \theta^b_0 (\theta^b_1 + \theta^0_1 + \zeta \theta^{(1)}_0) \big)\\
   &+ \big(\A : u^0_0 \otimes u^b_1 +  \B (u^0_0 \theta^b_1 + u^b_1 \theta^0_0 )+ \C \theta^0_0 \theta^b_1 \big) \,,
 \end{aligned}
\end{equation}
where $\hat{\A} = \L^{-1} \A\,, \hat{\B} = \L^{-1} \B$ are defined in \eqref{hat_A_B}.

The following lemma states that the fluid quantities of $g^b_k$ satisfies the Prandtl-type equations.
\begin{lemma}\label{Lmm-Prandtl}
Let $g^b_k$ be the solution of \eqref{Order_Anal_Viscous}, then $(u^b_{0,1},u^b_{0,2},u^b_{1,3},\theta^b_0)$ satisfy the {\em nonlinear incompressible Prandtl equations} \eqref{nonlinear_Prandtl}, and $(u^b_{k,1},u^b_{k,2},u^b_{k+1,3},\theta^b_k)$ $(k\geq 1)$ satisfy the {\em linear incompressible Prandtl equations} \eqref{linear_incompressible_Prandtl}. The sources $\mathrm{f}^b_{k-1,i}$ and $\mathrm{g}^b_{k}$ are given in \eqref{f_b_k-1} and \eqref{g_b_k}, respectively.  Once we have solved $(u^b_{k,1},u^b_{k,2},u^b_{k+1,3},\theta^b_k)(k\geq 1)$, then $ \P^\perp g^b_{k+2}$ and $p^b_{k+2}$ can be determined by \eqref{def_gb_k+2} and \eqref{pb_k+2}, respectively.
\end{lemma}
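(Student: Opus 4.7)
The plan is to derive each equation by imposing solvability on the hierarchy \eqref{Order_Anal_Viscous} and projecting against the collision invariants $\sqrt{\mu}, v_j\sqrt{\mu}\ (j=1,2,3), \tfrac{|v|^2-3}{2}\sqrt{\mu}$ that span $\N$. Since $\L g^b_{k+4}\in\N^\perp$, solvability at order $\sqrt{\eps}^k$ amounts to requiring
\[
\p_t g^b_{k-2}+\bar v\cdot\nabla_{\bar x}g^b_k+v_3\p_\zeta g^b_{k+1}-\text{RHS}_k\in\N^\perp.
\]
Given the explicit polynomial-in-$v$ structure of $g^b_0,g^b_1,g^b_2,g^b_3$ in \eqref{def_gb0}--\eqref{def_gb3}, each projection then reduces to standard Gaussian moment identities against the Burnett functions $\A,\B,\C$ and their pseudo-inverses $\hat\A,\hat\B$, together with the Taylor contributions of the interior solutions at $x_3=0$.

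For the leading (nonlinear) Prandtl, I would first take the $\sqrt{\mu}$-projection at order $\sqrt{\eps}^{0}$ to obtain the Prandtl incompressibility $\nabla_{\bar x}\cdot\bar u^b_0+\p_\zeta u^b_{1,3}=0$, and then the $v_j\sqrt{\mu}$-projection ($j=1,2$) and $\tfrac{|v|^2-3}{2}\sqrt{\mu}$-projection at order $\sqrt{\eps}^{2}$. At the latter order, the time derivative of $U^b_0$ comes from $\p_t g^b_0$; the convective operator $(\bar u^0_0+\bar u^b_0)\cdot\nabla_{\bar x}(\cdot)+u^b_{1,3}\p_\zeta(\cdot)$ assembles from the macroscopic and $\Gamma$-induced pieces of $g^b_2$ in \eqref{def_gb2} together with $v_3\p_\zeta B(U^b_3)$ and the $\zeta\,\Gamma(g_0^{(1)},g_0^b)$-type Taylor terms propagated from order $\sqrt{\eps}^{-1}$; and, crucially, the diffusion $-\eta\p_\zeta^2\bar u^b_0$ and $-\kappa\p_\zeta^2\theta^b_0$ is produced by the microscopic $\hat\A_{3l}\p_\zeta u^b_{0,l}$ and $\hat\B_3\p_\zeta\theta^b_0$ components of $g^b_3$ identified in \eqref{def_gb3}, with $\eta,\kappa$ given by standard Burnett integrals. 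The nominal horizontal gradient of $p^b_2$ in the momentum equation drops out because the $v_3\sqrt{\mu}$-projection forces $p^b_2$ to depend on $\zeta$ alone (analogously to how \eqref{pb0_ub03} was obtained for $p^b_0$), and under the shear-flow ansatz the interior traces simplify accordingly. The system collapses to \eqref{nonlinear_Prandtl}.

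For $k\ge 1$, repeating the same procedure at order $\sqrt{\eps}^{k+2}$ yields \eqref{linear_incompressible_Prandtl}: only $(u^b_{k,1},u^b_{k,2},u^b_{k+1,3},\theta^b_k)$ enter undifferentiated in the principal part, so the equation is linear in these unknowns, while all lower-order quadratic contributions $\Gamma(g^b_i,g^b_j)$ with $i+j\le k$, cross-products with the interior boundary traces $g^0_i$, and Taylor pieces $\zeta^l g^{(l)}_i/l!$ are collected into the explicit sources $\mathrm{f}^b_{k-1,i}$ and $\mathrm{g}^b_k$ recorded in \eqref{f_b_k-1}--\eqref{g_b_k}. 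Once $U^b_k$ is solved, the microscopic part $\P^\perp g^b_{k+2}$ is recovered by inverting $\L$ on the (now known) right-hand side at the corresponding order, producing \eqref{def_gb_k+2}; the pressure $p^b_{k+2}$ is then fixed by the $v_3\sqrt{\mu}$-projection at the next order exactly as \eqref{pb0_ub03} fixed $p^b_0$, yielding \eqref{pb_k+2}.

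The main obstacle is the multi-order bookkeeping: each target equation for $U^b_k$ simultaneously involves $g^b_k,g^b_{k+1},g^b_{k+2},g^b_{k+3}$ together with the interior Taylor remainders $g^{(l)}_j$ arising from expansion of the Euler/Boltzmann bulk solution at $x_3=0$. Correctly pairing the $\zeta\,\Gamma(g^{(1)}_0,g^b_0)$-type contributions with the $v_3\p_\zeta g^b_3$ terms, isolating the pure viscous operator from the $\hat\A,\hat\B$ inversion, and verifying that the spurious horizontal pressure gradients cancel under the shear-flow reduction, is where the bulk of the careful algebra lies.
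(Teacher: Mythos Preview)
Your approach matches the paper's: project \eqref{Order_Anal_Viscous} onto the collision invariants at successive orders, read off the diffusion from the $\hat\A_{3j},\hat\B_3$ components of $\P^\perp g^b_3$, and package the lower-order contributions into the sources. The one substantive gap is your treatment of the pressure $p^b_2=\rho^b_2+\theta^b_2$.

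The $v_3\sqrt\mu$-projection at order $\sqrt\eps^{\,1}$ does \emph{not} force $p^b_2$ to ``depend on $\zeta$ alone.'' Using $u^b_{0,3}=u^0_{0,3}=0$ and the far-field condition it yields the explicit relation
\[
p^b_2 \;=\; \tfrac{2}{3}\bigl[u^b_0\cdot u^0_0+\tfrac12|u^b_0|^2\bigr],
\]
which depends on $(t,\bar x,\zeta)$ in general. In the tangential momentum equation at $k=0$ the term $\p_i p^b_2$ therefore does not simply drop out: it cancels \emph{exactly} against the diagonal piece $-\tfrac{2}{3}\delta_{ij}\,\p_j\bigl[u^b_0\cdot(u^0_0+\tfrac12 u^b_0)\bigr]$ produced by $\sum_j\int\A_{ij}\,\P^\perp\p_j g^b_2\,\d v$ via \eqref{A_ij_A_kl}. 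This algebraic cancellation is what gives the pressureless form of \eqref{nonlinear_Prandtl}, and it requires no shear-flow assumption; the paper invokes shear flow only later, for the remainder estimates. The same mechanism, recorded in \eqref{pb_k+2}, is what decouples $p^b_{k+2}$ from the linear system \eqref{linear_incompressible_Prandtl} at every $k\ge 1$.

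A smaller point: the vertical convection $u^b_{1,3}\,\p_\zeta u^b_{0,i}$ does not come from $v_3\p_\zeta B(U^b_3)$. For $i=1,2$ the macroscopic part of $g^b_3$ is orthogonal to $\A_{i3}$ and contributes nothing to $\int\A_{i3}\,\p_\zeta g^b_3\,\d v$; the convection arises from the $\A$-tensor portion of $\P^\perp g^b_3$ displayed in \eqref{def_gb3}.
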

Before going into the proof, we present some frequently used equalities. Let
$$M:= \A: u_1 \otimes u_2 + \B (u_1 \theta_2 + u_2 \theta_1) + \C(\theta_1 \theta_2)\,.$$
Then, for fixed $i$, we have
\begin{align}
  \sum_{j=1}^{2}\p_j \int_{\R^3} \A_{ij} M \d v &=\sum_{j=1}^{2} \p_j(u_{1,i} u_{2,j} + u_{1,j} u_{2,i} - \tfrac{2}{3} \delta_{ij} u_1 \cdot u_2) \,, \label{A_ij_M}\\
  \p_\zeta \int_{\R^3} \A_{i3} M \d v &= \p_\zeta(u_{1,3} u_{2,i} + u_{1,i} u_{2,3} - \tfrac{2}{3} \delta_{i3} u_1 \cdot u_2) \,, \label{A_i3_M}\\
  \sum_{j=1}^{2}\p_j \int_{\R^3} \B_{j} M \d v &=\sum_{j=1}^{2} \tfrac{5}{2} \p_j(u_{1} \theta_2 + u_{2} \theta_1)_j \,, \label{B_j_M}\\
  \p_\zeta \int_{\R^3} \B_{3} M \d v &= \tfrac{5}{2} \p_\zeta (u_{1} \theta_2 + u_{2} \theta_1)_3 \,,\label{B_3_M}
\end{align}
where we have used (see \cite[lemma 4.4]{BGL2} for instance) 
\begin{equation}\label{A_ij_A_kl}
\l {\A}_{ij},\A_{kl} \r =(\delta_{ij}\delta_{jl}+\delta_{il}\delta_{jk}-\tfrac{2}{3}\delta_{ij}\delta_{kl})\,, \quad 
  \l {\B}_{i},\B_{j} \r = \tfrac{5}{2} \delta_{ij}\,. 
\end{equation}
We also have 
\begin{equation}\label{hat_A_ij_A_kl}
  \l \hat{\A}_{ij},\A_{kl} \r = \kappa_1(\delta_{ij}\delta_{jl}+\delta_{il}\delta_{jk}-\tfrac{2}{3}\delta_{ij}\delta_{kl})\,, \quad 
    \l \hat{\B}_{i},\B_{j} \r = \kappa_2 \tfrac{5}{2} \delta_{ij} \,,
  \end{equation}
  for constants $\kappa_1,\kappa_2 >0$.
\begin{proof}[Proof of Lemma \ref{Lmm-Prandtl}]
Multiplying the equation of $O(\sqrt{\eps}^{k+2})$ in \eqref{Order_Anal_Viscous} by $\sqrt{\mu},v \sqrt{\mu},\tfrac{|v|^2-3}{2}\sqrt{\mu}$, and integrating over $v \in \mathbb{R}^3$, yields
\begin{equation}\label{viscous_macro_eqn}
  \begin{cases}
    \partial_t\rho^b_k+\sum_{j=1}^2\partial_ju_{k+2,j}^b+\partial_\zeta u^b_{k+3,3}=0\,,\\
    \partial_tu^b_{k,i}+\partial_i(\rho^b_{k+2}+\theta^b_{k+2}) +\sum_{j=1}^{2}\int_{\mathbb{R}^3} \A_{ij} \P^\perp \partial_j g^b_{k+2} \d v + \int_{\mathbb{R}^3} \A_{i3} \P^\perp \partial_\zeta g^b_{k+3} \d v =0   \quad (i=1,2)\,,\\
    \partial_t u^b_{k,3} + \partial_\zeta(\rho^b_{k+3}+\theta^b_{k+3}) +\sum_{j=1}^{2}\int_{\mathbb{R}^3} \A_{3j} \P^\perp \partial_j g^b_{k+2} \d v + \int_{\mathbb{R}^3} \A_{33} \P^\perp \partial_\zeta g^b_{k+3} \d v=0\,,   \\
    \tfrac{3}{2} \partial_t\theta^b_k - \partial_t\rho^b_k +\sum_{j=1}^{2}\int_{\mathbb{R}^3} \B_j \P^\perp\partial_j g^b_{k+2} \d v +\int_{\mathbb{R}^3} \B_3 \P^\perp \partial_\zeta g^b_{k+3} \d v=0\,.
  \end{cases}
\end{equation}
By setting $k=-2$ in $\eqref{viscous_macro_eqn}_1$, we obtain the incompressibility condition
\begin{equation}\label{incompressibility_ub0}
  \sum_{j=1}^{2}\p_j u^b_{0,j} + \p_\zeta u^b_{1,3} =0\,.
\end{equation}
By setting $k=-1$ in $\eqref{viscous_macro_eqn}_3$, and using the fact that $u^b_{0,3} (t, \bar{x}, \zeta) \equiv 0$, we have
\begin{equation*}
  \p_\zeta (\rho^b_2 + \theta^b_2) - \tfrac{2}{3} \p_\zeta[u^b_0 \cdot u^0_0 + \tfrac{1}{2}|u^b_0|^2] =0\,.
\end{equation*}
Together with the far-field condition \eqref{viscous_far_field},
\begin{equation}\label{pb2}
   (\rho^b_2 + \theta^b_2) - \tfrac{2}{3}  [u^b_0 \cdot u^0_0 + \tfrac{1}{2}|u^b_0|^2] \equiv 0 \,.
\end{equation}
Now, we are already to derive the equations for $( u^b_{0,1}, u^b_{0,2}, u^b_{1,3}, \theta^b_0)$. \\
\underline{\em Calculation of $\sum_{j=1}^{2}\int_{\mathbb{R}^3} \A_{ij} \P^\perp \partial_j g^b_{2} \d v$ with $i=1,2$.}
By \eqref{def_gb2} and \eqref{A_ij_M},
\begin{equation*}
	\begin{aligned}
		\sum_{j=1}^{2}\int_{\mathbb{R}^3} \A_{ij} \P^\perp \partial_j g^b_{2} \d v 
		= &\sum_{j=1}^{2} \p_j [u^b_{0,i}(u^0_0 + \tfrac{1}{2} u^b_0)_j +u^b_{0,j}(u^0_0 + \tfrac{1}{2} u^b_0)_i - \tfrac{2}{3} \delta_{ij} u^b_{0} \cdot (u^0_0 + \tfrac{1}{2} u^b_0) ]\\
		= &\sum_{j=1}^{2} \p_j [u^b_{0,i} u^b_{0,j} + u^b_{0,i} u^0_{0,j} +u^b_{0,j}u^0_{0,i} - \tfrac{2}{3}u^b_{0} \delta_{ij} \cdot (u^0_0 + \tfrac{1}{2} u^b_0) ]\,.
	\end{aligned}
\end{equation*}
\underline{\em Calculation of $\int_{\mathbb{R}^3} \A_{i3} \P^\perp \partial_\zeta g^b_{3} \d v$ with $i=1,2$.} By \eqref{def_gb3}, \eqref{A_i3_M} and \eqref{hat_A_ij_A_kl},
\begin{equation*}
	\begin{aligned}
		\int_{\mathbb{R}^3} \A_{i3} \P^\perp \partial_\zeta g^b_{3} \d v
		= & - \p^2_\zeta \int_{\R^3} \A_{3i} \sum_{j=1}^{3}\hat{ \A}_{3j} u^b_{0,j} \d v  + \p_\zeta [u^b_{0,3}(u^b_1 + u^0_1 + \zeta u^{(1)}_0)_i 
		+ u^b_{0,i}(u^b_1 + u^0_1 + \zeta u^{(1)}_0)_3] \\
    & +\p_\zeta (u^0_{0,3} u^b_{1,i} + u^b_{1,3} u^0_{0,i})\\
		= & -\kappa_1 \p^2_\zeta u^b_{0,i} + \p_\zeta  [ u^b_{0,i}(u^b_1 + u^0_1 + \zeta u^{(1)}_0)_3] +\p_\zeta (u^b_{1,3} u^0_{0,i})\,,
	\end{aligned}
\end{equation*}
where we have used the fact that $u^b_{0,3} = u^0_{0,3} =0$ in the last equality, see \eqref{pb0_ub03} and \eqref{BC_u_03}.\\
\underline{\bf Equation for $u^b_{0,i}$ with $i=1,2$.} By \eqref{viscous_macro_eqn} and the above two identities,
\begin{equation*}
	\begin{aligned}
		\partial_tu^b_{0,i}+\partial_i(\rho^b_{2}+\theta^b_{2}) +\sum_{j=1}^{2} \p_j [u^b_{0,i} u^b_{0,j} + u^b_{0,i} u^0_{0,j} +u^b_{0,j}u^0_{0,i} - \delta_{ij} \tfrac{2}{3}u^b_{0} \cdot (u^0_0 + \tfrac{1}{2} u^b_0) ] \\
		-\kappa_1 \p^2_\zeta u^b_{0,i} + \p_\zeta  [ u^b_{0,i}(u^b_1 + u^0_1 + \zeta u^{(1)}_0)_3] +\p_\zeta (u^b_{1,3} u^0_{0,i})=0\,.
	\end{aligned}
\end{equation*}
By a simple cancellation,
\begin{equation}\label{eqn_ub_0i}
	\begin{aligned}
		\partial_tu^b_{0,i} &+\sum_{j=1}^{2} u^b_{0,j} \p_j u^b_{0,i} + u^b_{1,3} \p_\zeta u^b_{0,i} - \kappa_1 \p^2_\zeta u^b_{0,i} \\
		&+ \p_j \sum_{j=1}^{2} (u^b_{0,i} u^0_{0,j}) + \sum_{j=1}^{2} u^b_{0,j} \p_j u^0_{0,i} + \p_\zeta  [ u^b_{0,i}( u^0_1 + \zeta u^{(1)}_0)_3] =0\,,
	\end{aligned}
\end{equation}
where we have used \eqref{incompressibility_ub0}  and \eqref{pb2}. These are the equations for $u^b_{0,1}$ and $u^b_{0,2}$.

\underline{\em Calculation of $\sum_{j=1}^{2}\int_{\mathbb{R}^3} \A_{ij} \P^\perp \partial_j g^b_{2} \d v$ with $i=3$.}
Noting that $u^b_{0,3} = u^0_{0,3} = 0$, we have from \eqref{def_gb2} and \eqref{A_ij_M}
\begin{equation*}
	\begin{aligned}
		\sum_{j=1}^{2}\int_{\mathbb{R}^3} \A_{3j} \P^\perp \partial_j g^b_{2} \d v 
		= \sum_{j=1}^{2} \p_j [u^b_{0,3}(u^0_0 + \tfrac{1}{2} u^b_0)_j +u^b_{0,j}(u^0_0 + \tfrac{1}{2} u^b_0)_3]= 0\,.
	\end{aligned}
\end{equation*}
\underline{\em Calculation of $\int_{\mathbb{R}^3}\A_{i3} \P^\perp \partial_\zeta g^b_{3} \d v$ with $i=3$.} By \eqref{def_gb3} and \eqref{A_i3_M},
\begin{equation*}
	\begin{aligned}
		\int_{\mathbb{R}^3} \A_{33} \P^\perp \partial_\zeta g^b_{3} \d v
		= &- \p^2_\zeta \int_{\R^3} \A_{33} \sum_{j=1}^{3}\hat{ \A}_{3j} u^b_{0,j} \d v \\
		&+ \p_\zeta [ 2 u^b_{0,3}(u^b_1 + u^0_1 + \zeta u^{(1)}_0)_3 +
		  2 u^0_{0,3} u^b_{1,3} ] - \tfrac{2}{3} \p_\zeta[   u^b_{0} \cdot (u^b_1 + u^0_1 + \zeta u^{(1)}_0)
		  + u^0_{0} \cdot u^b_{1} ] \\
		= & -\tfrac{4}{3} \kappa_1 \p^2_\zeta u^b_{0,3} - \tfrac{2}{3} \p_\zeta  [ u^b_{0} \cdot (u^b_1 + u^0_1 + \zeta u^{(1)}_0) + (u^b_{1} \cdot u^0_{0}) ]\,.
	\end{aligned}
\end{equation*}
\underline{\bf Equation for $u^b_{0,3}$}
\begin{equation*}
	\begin{aligned}
		\partial_tu^b_{0,3}+\partial_\zeta(\rho^b_{3}+\theta^b_{3})- \tfrac{2}{3} \p_\zeta [ u^b_{0} \cdot (u^b_1 + u^0_1 + \zeta u^{(1)}_0) + (u^b_{1} \cdot u^0_{0}) ]=0\,.
	\end{aligned}
\end{equation*}
\begin{remark}
  The value of $u^b_{0,3}$ is already obtained from \eqref{pb0_ub03}. The above system actually provides the equation for $p^b_3 = \rho^b_3 + \theta^b_3$.
\end{remark}
Next, we consider the equation for $\theta^b_0$.\\
\underline{\em Calculation of $\sum_{j=1}^{2}\int_{\mathbb{R}^3} \B_{j} \P^\perp \partial_j g^b_{2} \d v$.} By \eqref{B_j_M} and \eqref{def_gb2}
\begin{equation*}
	\sum_{j=1}^{2}\int_{\mathbb{R}^3} \B_{j} \P^\perp \partial_j g^b_{2} \d v = \tfrac{5}{2} \sum_{j=1}^{2} \p_j (u^b_{0,j} \theta^b_0 + u^0_{0,j} \theta^b_0 + u^b_{0,j} \theta^0_0 )\,.
 \end{equation*}
\underline{\em Calculation of $\int_{\mathbb{R}^3} \B_3 \P^\perp \partial_j g^b_{3} \d v$.} By \eqref{B_3_M}, \eqref{hat_A_ij_A_kl}, and \eqref{def_gb3}
\begin{equation*}
	\begin{aligned}
		\int_{\mathbb{R}^3} \B_3 \P^\perp \partial_j g^b_{3} \d v
		=& - \tfrac{5}{2} \kappa_2 \p^2_\zeta \theta^b_0 + \tfrac{5}{2}\p_\zeta [u^b_{0,3}(\theta^b_1 + \theta^0_1 + \zeta \theta^{(1)}_0) + \theta^b_{0}(u^b_1 + u^0_1 + \zeta u^{(1)}_0)_3]\\
   & + \tfrac{5}{2} \p_\zeta (u^0_{0,3} \theta^b_1 + u^b_{1,3} \theta^0_0 )\,.
	\end{aligned}
\end{equation*}
\underline{\bf Equation for $\theta^b_0$.} By \eqref{viscous_macro_eqn} and the above equalities,
\begin{equation*}
	\tfrac{3}{2}\p_t \theta^b_0 - \p_t \rho^b_0 + \tfrac{5}{2} \sum_{j=1}^{2} \p_j (u^b_{0,j} \theta^b_0 + u^0_{0,j} \theta^b_0 + u^b_{0,j} \theta^0_0 ) - \tfrac{5}{2} \kappa_2 \p^2_\zeta \theta^b_0 + \tfrac{5}{2}\p_\zeta [\theta^b_{0}(u^b_1 + u^0_1 + \zeta u^{(1)}_0)_3 +u^b_{1,3} \theta^0_0] = 0\,,
\end{equation*}
where we have used the fact that $u^b_{0,3} = u^0_{0,3}$ again. By a simple cancellation
\begin{equation*}
	 \p_t \theta^b_0 +  \sum_{j=1}^{2}   u^b_{0,j} \p_j \theta^b_0 + u^b_{1,3} \p_\zeta \theta^b_0 -  \kappa_2 \p^2_\zeta \theta^b_0 + \sum_{j=1}^{2} \p_j (u^0_{0,j} \theta^b_0 + u^b_{0,j} \theta^0_0) + \p_\zeta [\theta^b_{0}(u^0_1 + \zeta u^{(1)}_0)_3 + u^b_{1,3}\theta^0_0] =0\,,
\end{equation*} 
where we used \eqref{pb0_ub03} and \eqref{incompressibility_ub0}. Equivalently,
\begin{equation}\label{eqn_thetab_0}
  \p_t \theta^b_0 +  \sum_{j=1}^{2}   u^b_{0,j} \p_j \theta^b_0 + u^b_{1,3} \p_\zeta \theta^b_0 -  \kappa_2 \p^2_\zeta \theta^b_0 + \sum_{j=1}^{2} \p_j (u^0_{0,j} \theta^b_0) + \sum_{j=1}^{2} u^b_{0,j} \p_j \theta^0_0 + \p_\zeta [\theta^b_{0}(u^0_1 + \zeta u^{(1)}_0)_3] =0\,.
\end{equation} 
As a consequence, we obtain the following incompressible Prandtl equation:
\begin{equation}\label{nonlinear_Prandtl}
	\begin{cases}
    \rho^b_0 + \theta^b_0 = 0\,,\\
		\sum_{j=1}^{2}\p_j u^b_{0,j} + \p_\zeta u^b_{1,3} =0\,,\\
		\partial_tu^b_{0,i} +\sum_{j=1}^{2} u^b_{0,j} \p_j u^b_{0,i} + u^b_{1,3} \p_\zeta u^b_{0,i} - \kappa_1 \p^2_\zeta u^b_{0,i} \\
		\qquad\qquad + \p_j \sum_{j=1}^{2} (u^b_{0,i} u^0_{0,j}) + \sum_{j=1}^{2} u^b_{0,j} \p_j u^0_{0,i} + \p_\zeta  [ u^b_{0,i}( u^0_1 + \zeta u^{(1)}_0)_3] = 0 \quad (i=1,2)\,,\\
		\p_t \theta^b_0 +  \sum_{j=1}^{2}   u^b_{0,j} \p_j \theta^b_0 + u^b_{1,3} \p_\zeta \theta^b_0 -  \kappa_2 \p^2_\zeta \theta^b_0 + \sum_{j=1}^{2} \p_j (u^0_{0,j} \theta^b_0) \\
    \qquad\qquad+ \sum_{j=1}^{2} u^b_{0,j} \p_j \theta^0_0 + \p_\zeta [\theta^b_{0}(u^0_1 + \zeta u^{(1)}_0)_3] =0\,,\\
    \lim_{\zeta \to \infty}(\bar{u}^b_0,u^b_{1,3},\theta^b_0) =0\,.
	\end{cases}
\end{equation}
\begin{remark}
 The quantities $u_0$ and $\theta_0$ are uniquely determined by \eqref{incompressible_Euler_0} and the boundary condition \eqref{BC_u_03}, while the quantity $u^0_{1,3}$ can be expressed in terms of $u^{b}_{1,3}$ using the relation \eqref{BC_u_13} and the far-field condition \eqref{viscous_far_field}, i.e.
 $$u^0_{1,3} = - u^{b,0}_{1,3} =  \int_0^\infty \p_\zeta u^{b}_{1,3} \d \zeta \,.$$
 Hence, the above system is well-posed.
\end{remark}
Next, we consider the cases $k \geq 1$ in \eqref{viscous_macro_eqn}.
It is known from \eqref{Order_Anal_Viscous} that 
  \begin{align*}
    \mathcal{L}g^b_{k+2}=& -\P^\perp \big(\partial_t g^b_{k-4} +  \bar{v}\cdot \nabla_{\bar{x}} g^b_{k-2} + v_3\partial_\zeta g^b_{k-1}\big)\\
    &+\sum_{\stackrel{i+j=k,}{i, j \geq 0}} \Gamma\left(g_{i}^{b}, g_{j}^{b}\right) + \sum_{\stackrel{i+j=k, }{i, j \geq 0}}\left[\Gamma\left(g_{i}^{0}, g_{j}^{b}\right)+\Gamma\left(g_{j}^{b}, g_{i}^{0}\right)\right] \\
    & +\sum_{\stackrel{i+j+l=k,}{ 1 \leq l \leq N, i, j \geq 0}} \frac{\zeta^{l}}{l!}\left[\Gamma\left(g_{i}^{(l)}, g_{j}^{b}\right)+\Gamma\left(g_{j}^{b}, g_{i}^{(l)}\right)\right] \\
    : =& \Gamma(g_0^b + g_0^0, \P g^b_k) + \Gamma(\P g^b_k, g_0^b + g_0^0 ) + \L J_{k-1}\,,
  \end{align*} 
where 
\begin{equation}\label{def_J_k-1}
  \begin{aligned} 
   J_{k-1}=&\mathcal{L}^{-1}\left\{-\P^\perp  \big(\p_t g^b_{k-4} + \bar{v}\cdot\nabla_{\bar{x}}  g^b_{k-2} + v_3 \p_\zeta g^b_{k-1}\big) \right.\\
   &\phantom{=\;\;}
   \left.{+\sum_{\stackrel{i+j=k, }{i \geq 1, j \geq 0}}\left[\Gamma\left(g_{i}^{0}, g_{j}^{b}\right) + \Gamma\left(g_{j}^{b}, g_{i}^{0}\right)\right] 
 +\sum_{\stackrel{i+j=k,}{i, j \geq 1}} \Gamma\left(g_{i}^{b}, g_{j}^{b}\right)} \right.\\
 &\phantom{=\;\;}
 \left.\sum_{\stackrel{i+j+l=k, }{i,j \geq 0, 1\leq l \leq N}}\frac{\zeta^l}{l!}\left[\Gamma(g^{(l)}_i,g^b_j)+\Gamma(g^b_j,g^{(l)}_i)\right]\right.\\
 &\phantom{=\;\;}
 \left.{
 +\Gamma(g^b_0 +g^0_0,\P^\perp g^b_k)+\Gamma( \P^\perp g^b_k,g^b_0 +g^0_0) }
 \right\}\,,
\end{aligned}
\end{equation}
which depends only on the lower-order terms. Moreover, 
  \begin{align*}
    \mathcal{L}g^b_{k+3}=& -\P^\perp \big(\partial_t g^b_{k-3} +  \bar{v}\cdot \nabla_{\bar{x}} g^b_{k-1} + v_3\partial_\zeta g^b_{k}\big)\\
    &+\sum_{\stackrel{i+j=k+1,}{i, j \geq 0}} \Gamma\left(g_{i}^{b}, g_{j}^{b}\right) + \sum_{\stackrel{i+j=k+1, }{i, j \geq 0}}\left[\Gamma\left(g_{i}^{0}, g_{j}^{b}\right)+\Gamma\left(g_{j}^{b}, g_{i}^{0}\right)\right] \\
    & +\sum_{\stackrel{i+j+l=k+1,}{ 1 \leq l \leq N, i, j \geq 0}} \frac{\zeta^{l}}{l!}\left[\Gamma\left(g_{i}^{(l)}, g_{j}^{b}\right)+\Gamma\left(g_{j}^{b}, g_{i}^{(l)}\right)\right] \\
    =& - \P^\perp (v_3 \p_\zeta \P g^b_k) + \Gamma(g_0^b + g_0^0, \P g^b_{k+1}) + \Gamma(\P g^b_{k+1}, g_0^b + g_0^0 )\\
    & + \Gamma(g_1^b + g_1^0, \P g^b_{k}) + \Gamma(\P g^b_{k}, g_1^b + g_1^0 ) + \zeta [\Gamma( g^{(1)}_0, \P g^b_k) + \Gamma( \P g^b_k, g^{(1)}_0 ) ] \\
    &+  [\Gamma( g^b_0, \P g^0_{k+1}) + \Gamma( \P g^0_{k+1}, g^b_0 ) ] - \delta_{k1} \Gamma(g^b_k,g^b_1)+ \L I_{k-1} \,,
  \end{align*}
where
\begin{equation} \label{def_I_k-1}
  \begin{aligned} 
   I_{k-1}=&\mathcal{L}^{-1}\left\{-\P^\perp  \big(\p_t g^b_{k-3} + \bar{v}\cdot\nabla_{\bar{x}}  g^b_{k-1} + v_3 \p_\zeta \P^\perp g^b_{k}\big) \right.\\
   &\phantom{=\;\;}
   \left.{+\sum_{\stackrel{i+j=k+1, }{i \geq 2, j \geq 1}}\left[\Gamma\left(g_{i}^{0}, g_{j}^{b}\right) + \Gamma\left(g_{j}^{b}, g_{i}^{0}\right)\right] 
 +\sum_{\stackrel{i+j=k+1,}{i, j \geq 2}} \Gamma\left(g_{i}^{b}, g_{j}^{b}\right)} \right.\\
 &\phantom{=\;\;}
 \left.\sum_{\stackrel{i+j+l=k+1, }{i \geq 0, 0 \leq j \leq k-1, 1\leq l \leq N}} \tfrac{\zeta^l}{l!}\left[\Gamma(g^{(l)}_i,g^b_j)+\Gamma(g^b_j,g^{(l)}_i)\right]\right.\\
 &\phantom{=\;\;}
 \left.
 +\Gamma(g^b_0 + g^0_0, \P^\perp g^b_{k+1} )+\Gamma(\P^\perp g^b_{k+1} , g^b_0 + g^0_0 )+\Gamma(g^b_1 + g^0_1, \P^\perp g^b_k)+\Gamma( \P^\perp g^b_k, g^b_1 + g^0_1) 
 \right.\\
 &\phantom{=\;\;}
 \left.
  + \zeta [\Gamma( g^{(1)}_0, \P^\perp g^b_k) + \Gamma( \P^\perp g^b_k, g^{(1)}_0 )] +[\Gamma( g^b_0, \P^\perp g^0_{k+1}) + \Gamma( \P^\perp g^0_{k+1}, g^b_0 ) ]
  \right\} \,.
\end{aligned}
\end{equation}
Together with \eqref{Gamma_f_g}, we obtain
\begin{equation}\label{def_gb_k+2}
  \begin{aligned}
    \P^\perp g^b_{k+2}=& \A(v):u^b_k\otimes (u^b_0 + u^0_0) 
    + \B(v) \cdot \big(u^b_k(\theta^b_0 + \theta^0_0) +\theta^b_k(u^b_0 + u^0_0)\big)\\
    &+ \C(v)\theta^b_k(\theta^b_0 + \theta^0_0) +J_{k-1} \,,
  \end{aligned}
\end{equation}
and
\begin{equation}\label{def_gb_k+3}
  \begin{aligned}
    \P^\perp g^b_{k+3}=&-\sum_{j=1}^{3}\hat{\A}_{3j}\partial_\zeta u^b_{k,j} -\hat{\B}_3 \partial_\zeta \theta^b_k + \A(v):u^b_{k+1} \otimes (u^b_0 + u^0_0)\\
    &+ \B(v) \cdot \big(u^b_{k+1}( \theta^b_0 + \theta^0_0) +\theta^b_{k+1} (u^b_0 + u^0_0)\big) + \C(v)\theta^b_{k+1} (\theta^b_0 + \theta^0_0) \\
     &+\A(v):u^b_k\otimes (u^b_1 + u^0_1+ \zeta u^{(1)}_0) 
     + \B(v) \cdot \big(u^b_k(\theta^b_1 + \theta^0_1 + \zeta \theta^{(1)}_0) +\theta^b_k(u^b_1 + u^0_1 + \zeta u^{(1)}_0)\big)\\
     &+ \C(v)\theta^b_k(\theta^b_1 + \theta^0_1 + \zeta \theta^{(1)}_0)\\
     & +\A(v):u^0_{k+1} \otimes u^b_0 + \B(v) \cdot \big( u^0_{k+1} \theta^b_0 +\theta^0_{k+1} u^b_0 \big) + \C(v)\theta^0_{k+1} \theta^b_0 \\
     &- \delta_{k1}\tfrac{1}{2}[\A(v):u^b_{1} \otimes u^b_{1} + 2 \B(v) \cdot u^b_{1} \theta^b_1  + \C(v)\theta^b_{1} \theta^b_1]
     + I_{k-1}\,.
  \end{aligned}
\end{equation}
\underline{\em Calculation of $\sum_{j=1}^{2}\int_{\mathbb{R}^3} \A_{ij} \P^\perp \partial_j g^b_{k+2} \d v$ with $i=1,2$.} By \eqref{def_gb_k+2} and \eqref{A_ij_M},
	\begin{align*}
		&\sum_{j=1}^{2}\int_{\mathbb{R}^3} \A_{ij} \P^\perp \partial_j g^b_{k+2} \d v \\
		= &\sum_{j=1}^{2} \p_j [u^b_{k,i}(u^0_0 +  u^b_0)_j +u^b_{k,j}(u^0_0 +  u^b_0)_i - \delta_{ij} \tfrac{2}{3}u^b_{k} \cdot (u^0_0 + u^b_0) ] + \sum_{j=1}^{2} \int_{\R^3} \A_{ij} \p_j J_{k-1} \d v\,.
	\end{align*} 
\underline{\em Calculation of $\int_{\mathbb{R}^3}A_{i3} \P^\perp \partial_\zeta g^b_{k+3} \d v$ with $i=1,2$.} By \eqref{def_gb_k+3}, \eqref{A_i3_M} and \eqref{hat_A_ij_A_kl},
		\begin{align*}
      \int_{\mathbb{R}^3} \A_{i3} \P^\perp \partial_\zeta g^b_{k+3} \d v
      		= & -\kappa_1 \p^2_\zeta u^b_{k,i} 
      		+ \p_\zeta [u^b_{k,3}(u^b_1 + u^0_1 + \zeta u^{(1)}_0)_i
      		+ u^b_{k,i}(u^b_1 + u^0_1 + \zeta u^{(1)}_0)_3] \\
      		&+\p_\zeta [(u^b_{0}+u^0_0)_3 u^b_{k+1,i} + u^b_{k+1,3} (u^0_{0} + u^b_0)_i] + \p_\zeta[u^0_{k+1,3} u^b_{0,i} + u^0_{k+1,i} u^b_{0,3}] \\
          &- \delta_{k1} \p_\zeta (u^b_{1,3}u^b_{1,i})+ \int_{\R^3} \A_{3i} \p_\zeta I_{k-1} \d v\,.
    \end{align*}
\underline{\bf Equation for $u^b_{k,i}$ with $i=1,2$.} By \eqref{viscous_macro_eqn} and the above two identities,
		\begin{align*}
      \partial_tu^b_{k,i} &+\partial_i(\rho^b_{k+2}+\theta^b_{k+2}) +\sum_{j=1}^{2} \p_j [u^b_{k,i} (u^b_{0}+ u^0_0)_j + u^b_{k,j}(u^b_{0} + u^0_0)_i - \delta_{ij} \tfrac{2}{3}u^b_{k} \cdot (u^0_0 + u^b_0) ] \\
      		&-\kappa_1 \p^2_\zeta u^b_{k,i} + \p_\zeta  [ u^b_{k,i}(u^b_1 + u^0_1 + \zeta u^{(1)}_0)_3 + u^b_{k,3}(u^b_1 + u^0_1 + \zeta u^{(1)}_0)_i] \\
      		&+\p_\zeta [u^b_{k+1,3}(u^b_0 + u^0_0)_i + u^0_{k+1,3} u^b_{0,i}] - \delta_{1k} \p_\zeta (u^b_{1,3}u^b_{1,i}) \\
          & + \sum_{j=1}^{2}\int_{\R^3} \A_{ij} \p_j J_{k-1} \d v + \int_{\R^3} \A_{i3} \p_\zeta I_{k-1} \d v  =0\,.
    \end{align*}
By a simple cancellation, using \eqref{pb_k+2} and $\eqref{viscous_macro_eqn}_1$,
\begin{equation}\label{f_b_k-1}
	\begin{aligned}
		\partial_tu^b_{k,i} &+ \sum_{j=1}^{2} (u^b_{0}+ u^0_0)_j \p_j u^b_{k,i} + u^b_{k+1,3} \p_\zeta (u^b_{0} + u^0_0)_i - \kappa_1 \p^2_\zeta u^b_{k,i} +  \sum_{j=1}^{2} u^b_{k,j} \p_j(u^b_0 +u^0_{0})_i \\
    &+ \sum_{j=1}^{2} u^b_{k,i} \p_j (u^b_{0}+ u^0_0)_j + \p_\zeta [u^b_{k,i} u^b_{1,3} + u^0_{k+1,3}u^b_{0,i}]+ \p_\zeta  [ u^b_{k,i}( u^0_1 + \zeta u^{(1)}_0)_3] \\
    =&\p_t \rho^b_{k-2}(u^b_0 + u^0_0)_i - \p_i H^b_{k-1} - \p_\zeta[u^b_{k,3}( u^b_1 + u^0_1 + \zeta u^{(1)}_0)_i] \\
    &+ \delta_{1k} \p_\zeta (u^b_{1,3}u^b_{1,i}) - \sum_{j=1}^{2}\int_{\R^3} \A_{ij} \p_j J_{k-1} \d v - \int_{\R^3} \A_{i3} \p_\zeta I_{k-1} \d v \\
    : =& \mathrm{f}^b_{k-1,i}\,,
	\end{aligned}
\end{equation}
which further implies 
		\begin{align*}
      \partial_tu^b_{k,i} & + \sum_{j=1}^{2} (u^b_{0}+ u^0_0)_j \p_j u^b_{k,i} + u^b_{k+1,3} \p_\zeta (u^b_{0} + u^0_0)_i - \kappa_1 \p^2_\zeta u^b_{k,i} +  \sum_{j=1}^{2} u^b_{k,j} \p_j(u^b_0 +u^0_{0})_i\\
          & + \sum_{j=1}^{2} u^b_{k,i} \p_j u^0_{0,j} + \p_\zeta u^b_{k,i} u^b_{1,3} + \p_\zeta [ u^0_{k+1,3}u^b_{0,i} + u^b_{k,i}( u^0_1 + \zeta u^{(1)}_0)_3] =\mathrm{f}^b_{k-1,i} \,,
    \end{align*} 
    where we used the fact that $ \sum_{j=1}^{2}\p_j u^b_{0,j} + \p_\zeta u^b_{1,3} = 0$.
\begin{remark}
 The term $\mathrm{f}^b_{k-1,i}$ is treated as a source term that is independent of $u^b_{k,i} \ (i=1,2)$. In particular, when $k = 1$, it is clear that the terms in $\mathrm{f}^b_{k-1,i}$ that depend on $u^b_{1,i} \ (i=1,2)$ vanish.
\end{remark}
Next, we derive the equation for $u^b_{k,3}$. As in the case $k = 0$, this in fact yields the equation for $p^b_{k+3}$, since $u^b_{k,3}$ has already been determined together with $(u^b_{k,i})$ for $i = 1,2$.\\
\underline{\em Calculation of $\sum_{j=1}^{2}\int_{\mathbb{R}^3} \A_{ij} \P^\perp \partial_j g^b_{k+2} \d v$ with $i=3$.} By \eqref{def_gb_k+2} and \eqref{A_ij_M},
\begin{equation*}
	\begin{aligned}
		\sum_{j=1}^{2}\int_{\mathbb{R}^3} \A_{3j} \P^\perp \partial_j g^b_{k+2} \d v 
		= &\sum_{j=1}^{2} \p_j [u^b_{k,3}(u^0_0 + u^b_0)_j +u^b_{k,j}(u^0_0 +  u^b_0)_3] + \sum_{j=1}^{2} \int_{\R^3} \A_{3j} \p_j J_{k-1} \d v\\
		= &\sum_{j=1}^{2} \p_j [u^b_{k,3}(u^0_0 + u^b_0)_i] + \sum_{j=1}^{2} \int_{\R^3} \A_{3j} \p_j J_{k-1} \d v\,.
	\end{aligned}
\end{equation*}
\underline{\em Calculation of $\int_{\mathbb{R}^3}A_{i3} \P^\perp \partial_\zeta g^b_{k+3} \d v$ with $i=3$.} By \eqref{def_gb_k+3}, \eqref{A_i3_M} and \eqref{hat_A_ij_A_kl},
		\begin{align*}
      &\int_{\mathbb{R}^3} \A_{33} \P^\perp \partial_\zeta g^b_{k+3} \d v
      		= - \p^2_\zeta \int_{\R^3} \A_{33} \sum_{j=1}^{3}\hat{ \A}_{3j} u^b_{k,j} \d v \\
      		&+ \p_\zeta [2 u^b_{k,3}(u^b_1 + u^0_1 + \zeta u^{(1)}_0)_3 - \tfrac{2}{3} u^b_{k} \cdot (u^b_1 + u^0_1 + \zeta u^{(1)}_0)] 
          +\p_\zeta [2 u^0_{k+1,3} u^b_{0,3} - \tfrac{2}{3} u^0_{k+1} \cdot u^b_0] \\
          &+\p_\zeta [2 u^b_{k+1,3} (u^b_{0} + u^0_0)_3 -\tfrac{2}{3} u^b_{k+1} \cdot(u^b_0 + u^0_0)] -\delta_{1k}\p_\zeta [(u^b_{1,3})^2 -\tfrac{1}{3}|u^b_1|^2] + \int_{\R^3} \A_{33} \p_\zeta I_{k-1} \d v\\
      		=& -\tfrac{4}{3} \kappa_1 \p^2_\zeta u^b_{k,3} + \p_\zeta [2 u^b_{k,3}(u^b_1 + u^0_1 + \zeta u^{(1)}_0)_3 - \tfrac{2}{3} u^b_{k} \cdot (u^b_1 + u^0_1 + \zeta u^{(1)}_0)] \\
      		&- \tfrac{2}{3} \p_\zeta [u^b_{k+1} \cdot(u^b_0 + u^0_0) + u^0_{k+1} \cdot u^b_0] -\delta_{1k}\p_\zeta [(u^b_{1,3})^2 -\tfrac{1}{3}|u^b_1|^2] + \int_{\R^3} \A_{33} \p_\zeta I_{k-1} \d v\,.
    \end{align*}
\underline{\bf Equation for $u^b_{k,3}$.}
		\begin{align*}
      &\partial_tu^b_{k,3}+\partial_\zeta(\rho^b_{k+3}+\theta^b_{k+3}) + \sum_{j=1}^{2} \p_j [u^b_{k,3} (u^b_0 + u^0_0)_j] - \tfrac{4}{3} \kappa_1 \p^2_\zeta u^b_{k,3}\\ 
          &- \tfrac{2}{3} \p_\zeta [u^b_{k+1}(u^b_0 + u^0_0) + u^0_{k+1} \cdot u^b_0] + \p_\zeta  [ 2 u^b_{k,3} (u^b_1 + u^0_1 + \zeta u^{(1)}_0)_3- \tfrac{2}{3} u^b_{k} \cdot (u^b_1 + u^0_1 + \zeta u^{(1)}_0)] \\
          & -\delta_{1k}\p_\zeta [(u^b_{1,3})^2 -\tfrac{1}{3}|u^b_1|^2] 
           + \sum_{j=1}^{2} \int_{\R^3} \A_{3j} \p_j J_{k-1} \d v + \int_{\R^3} \A_{33} \p_\zeta I_{k-1} \d v=0\,.
    \end{align*}
\begin{remark}
  This gives the system for pressure $p^b_{k+2}$.
 \begin{equation}\label{W_b_k-1}
   \begin{aligned}
    &\partial_\zeta(\rho^b_{k+2}+\theta^b_{k+2}) - \tfrac{2}{3} \p_\zeta [u^b_{k} \cdot(u^b_0 + u^0_0) ]\\
    =& -\partial_tu^b_{k-1,3} - \sum_{j=1}^{2} \p_j [u^b_{k-1,3} (u^b_0 + u^0_0)_j] + \tfrac{4}{3} \kappa_1 \p^2_\zeta u^b_{k-1,3}+ \tfrac{2}{3} \p_\zeta [ u^0_{k} \cdot u^b_0]\\
  		& - \p_\zeta  [ 2 u^b_{k-1,3} (u^b_1 + u^0_1 + \zeta u^{(1)}_0)_3- \tfrac{2}{3} u^b_{k-1} \cdot (u^b_1 + u^0_1 + \zeta u^{(1)}_0)] \\
      &+\delta_{1\{k-1\}}\p_\zeta [(u^b_{1,3})^2 -\tfrac{1}{3}|u^b_1|^2]   - \sum_{j=1}^{2} \int_{\R^3} \A_{3j} \p_j J_{k-2} - \int_{\R^3} \A_{33} \p_\zeta I_{k-2} \d v:=W^b_{k-1}\,.
   \end{aligned}
 \end{equation}
 Together with the far-field condition \eqref{viscous_far_field}, we have
 \begin{equation}\label{pb_k+2}
  \Big((\rho^b_{k+2}+\theta^b_{k+2}) - \tfrac{2}{3} [u^b_{k} \cdot(u^b_0 + u^0_0) ]\Big)(t,\bar{x},\zeta) = - \int_{\zeta}^{\infty} W^b_{k-1}(t,\bar{x},\zeta^\prime ) \d \zeta^\prime  := H^b_{k-1}\,.
 \end{equation}
 Once we have solved the system for $\bar{u}^b_k$, we can obtain $p^b_{k+2}$ by \eqref{pb_k+2}.
\end{remark}

Next, we derive the equation for $\theta^b_k$.\\
\underline{\em Calculation of $\sum_{j=1}^{2}\int_{\mathbb{R}^3} \B_{j} \P^\perp \partial_j g^b_{k+2} \d v$.}
\begin{equation}
	\sum_{j=1}^{2}\int_{\mathbb{R}^3} \B_{j} \P^\perp \partial_j g^b_{k+2} \d v = \tfrac{5}{2} \sum_{j=1}^{2} \p_j [u^b_{k,j}( \theta^b_0 + \theta^0_0) + \theta^b_{k}( u^b_0 + u^0_0)_j ] + \sum_{j=1}^{2} \int_{\R^3} \B_j \p_j J_{k-1} \d v \,.
\end{equation}
\underline{\em Calculation of $\int_{\mathbb{R}^3} \B_3 \P^\perp \partial_\zeta g^b_{k+3} \d v$.} 
	\begin{align*}
		\int_{\mathbb{R}^3} \B_3 \P^\perp \partial_\zeta g^b_{k+3} \d v
		=& - \tfrac{5}{2} \kappa_2 \p^2_\zeta \theta^b_k + \tfrac{5}{2}\p_\zeta [u^b_{k,3}(\theta^b_1 + \theta^0_1 + \zeta \theta^{(1)}_0) + \theta^b_{k}(u^b_1 + u^0_1 + \zeta u^{(1)}_0)_3] \\
		 &+ \tfrac{5}{2}\p_\zeta [u^b_{k+1,3}( \theta^b_0 + \theta^0_0) + \theta^b_{k+1} (u^b_0 + u^0_0)_3] + \tfrac{5}{2}\p_\zeta [u^0_{k+1,3} \theta^b_0  + \theta^0_{k+1} u^b_{0,3}] \\
     & - \tfrac{5}{2}\delta_{k1} \p_\zeta[u^b_{1,3} \theta^b_1] + \int_{\R^3} \B_3 \p_\zeta I_{k-1} \d v \,.
	\end{align*} 
\underline{\bf Equation for $\theta^b_k$.} As a result, we derive
		\begin{align*}
      \tfrac{3}{2}\p_t \theta^b_k - \p_t \rho^b_k & +  \tfrac{5}{2} \sum_{j=1}^{2} \p_j [u^b_{k,j}( \theta^b_0 + \theta^0_0) + \theta^b_{k}( u^b_0 + u^0_0)_j ]  - \tfrac{5}{2} \kappa_2 \p^2_\zeta \theta^b_k \\
         & + \tfrac{5}{2} \p_\zeta [u^b_{k,3}( \theta^b_1 + \theta^0_1 + \zeta \theta^{(1)}_0) + \theta^b_{k} (u^b_1 + u^0_1 + \zeta u^{(1)}_0)_3] \\
      		& + \tfrac{5}{2}\p_\zeta [u^b_{k+1,3} (\theta^b_0 + \theta^0_0) + u^0_{k+1,3}\theta^b_0] - \tfrac{5}{2}\delta_{k1} \p_\zeta[u^b_{1,3} \theta^b_1]\\
          & + \sum_{j=1}^{2} \int_{\R^3}\B_j \p_j J_{k-1} \d v + \int_{\R^3} \B_3 \p_\zeta I_{k-1} \d v  = 0 \,.
    \end{align*}
By a simple cancellation, and using \eqref{pb_k},
\begin{equation}\label{g_b_k}
\begin{aligned}
  	 &\p_t \theta^b_k - \kappa_2 \p^2_\zeta \theta^b_k +  \sum_{j=1}^{2} \p_j  [ (u^b_{0}+ u^0_0)_j \theta^b_k ] + \p_\zeta [\theta^b_{k}(u^b_1 + u^0_1 + \zeta u^{(1)}_0)_3] \\
     = &\tfrac{2}{5} \p_t p^b_k  -\sum_{j=1}^{2} \p_j [u^b_{k,j}( \theta^b_0 + \theta^0_0) ] - \p_\zeta [u^b_{k,3}( \theta^b_1 + \theta^0_1 + \zeta \theta^{(1)}_0)]
     - \p_\zeta [u^b_{k+1,3} (\theta^b_0 + \theta^0_0) + u^0_{k+1,3}\theta^b_0] \\
     &+ \delta_{k1} \p_\zeta[u^b_{1,3} \theta^b_1] - \sum_{j=1}^{2} \tfrac{2}{5} \int_{\R^3}\B_j \p_j J_{k-1} \d v - \tfrac{2}{5} \int_{\R^3} \B_3 \p_\zeta I_{k-1} \d v  \\
     :=& \mathrm{g}^b_{k}\,,
\end{aligned}
\end{equation}
where $\mathrm{g}^b_{k}$ represents known terms when solving $\theta^b_k$. {\bf This is because the equation of $\bar{u}^b_k$ is independent of $\theta^b_k$, allowing us to first solve for $u^b_k$, then determine $\theta^b_k$.}

As a consequence, we obtain the following linear incompressible Prandtl equation:
\begin{equation}\label{linear_incompressible_Prandtl}
  \begin{cases}
    \sum_{j=1}^{2}\p_j u^b_{k,j} + \p_\zeta u^b_{k+1,3} = -\p_t \rho_{k-2}^b\,,\\
    \partial_tu^b_{k,i}  + \sum_{j=1}^{2} (u^b_{0}+ u^0_0)_j \p_j u^b_{k,i} + u^b_{k+1,3} \p_\zeta (u^b_{0} + u^0_0)_i - \kappa_1 \p^2_\zeta u^b_{k,i} +  \sum_{j=1}^{2} u^b_{k,j} \p_j(u^b_0 +u^0_{0})_i\\
    \qquad\quad + \sum_{j=1}^{2} u^b_{k,i} \p_j u^0_{0,j} + \p_\zeta u^b_{k,i} u^b_{1,3} + \p_\zeta [ u^0_{k+1,3}u^b_{0,i} + u^b_{k,i}( u^0_1 + \zeta u^{(1)}_0)_3] =\mathrm{f}^b_{k-1,i} \ (i=1,2)\,,\\
    \p_t \theta^b_k - \kappa_2 \p^2_\zeta \theta^b_k +  \sum_{j=1}^{2} \p_j  [ (u^b_{0}+ u^0_0)_j \theta^b_k ] + \p_\zeta [\theta^b_{k}(u^b_1 + u^0_1 + \zeta u^{(1)}_0)_3] = \mathrm{g}^b_{k}\,,\\
    \lim_{\zeta \to \infty}( \bar{u}^b_{k}, u^b_{k+1,3}, \theta^b_k )(t,\bar{x},\zeta)=0\,.
  \end{cases}
\end{equation}
\end{proof}
We emphasize that $W^b_{k-1}$, $H^b_{k-1}$, $J_{k-1}$, $I_{k-1}$, $\mathrm{f}^b_{k-1,i}$ and $\mathrm{g}^b_{k-1}$ depend on $g^b_j\ (1 \le j \le k-1)$ and $g_j \ (1 \le j \le k)$. Moreover, we use the notation $f_k =0$ whenever the subscript $k <0$. Finally, the initial conditions of \eqref{nonlinear_Prandtl} and \eqref{linear_incompressible_Prandtl} are imposed on
\begin{equation}\label{IC_Prandtl}
\begin{aligned}
(\bar{u}^b_0, \theta^b_k) (0, \bar{x}, \zeta) = (\bar{u}^{b, in}_0 , \theta^{b, in}_0 ) ( \zeta)  \,,\quad  (\bar{u}^b_k, \theta^b_k) (0, \bar{x}, \zeta) = (\bar{u}^{b, in}_k , \theta^{b, in}_k ) (\bar{x}, \zeta)  \,, \quad k = 1,2, \cdots
\end{aligned}
\end{equation}
with $\lim_{\zeta \to \infty} (\bar{u}^{b, in}_k , \theta^{b, in}_k) (\bar{x}, \zeta) = 0$ for $k =0,1,2\cdots$. This implies that the leading-order term corresponds a shear flow.

\subsection{Knudsen boundary layer and boundary conditions}
Since the sum of the Hilbert and viscous boundary layer solutions is a infinitesimal Maxwellian at $O(1)$ and $O(\sqrt{\eps})$. It matches the Maxwell reflection boundary condition if and only if 
\begin{equation}\label{BC_u_01_ub_01}
  u^0_j + u^{b,0}_j=0\,, \quad  \theta^0_j + \theta^{b,0}_j=0\,, \quad \text{for}\quad \! j=0,1\,.
\end{equation}
This gives the boundary conditions to the macroscopic part of $g_0,g_1,g^b_0,$ and $g^b_1$. Recalling \eqref{pb0_ub03}, we obtain the boundary condition for the incompressible Euler system \eqref{incompressible_Euler_0}
\begin{equation}\label{BC_u_03}
	u^0_{0,3} =0\,.
\end{equation}
Once the system \eqref{incompressible_Euler_0} is solved, we obtain the boundary conditions for the nonlinear Prandtl layer problem \eqref{nonlinear_Prandtl}:
\begin{equation}\label{BC_ub_thetab_0}
	u^{b,0}_{0,i} = - u^0_{0,i} \ (i=1,2)\,, \quad
	\theta^{b,0}_0 = - \theta^0_{0}\,.
\end{equation}
While solving the Prandtl layer problem \eqref{nonlinear_Prandtl}, we can obtain the boundary value of $u^b_{1,3}$, and therefore we obtain the boundary condition for $u^0_{1,3}$:
\begin{equation}\label{BC_u_13}
	 u^0_{1,3}= - u^{b,0}_{1,3} \,.
\end{equation}
Then, the linear incompressible Euler system \eqref{linear_Euler} with $k=1$ can be solved. Consequently, the boundary conditions for the linear incompressible Prandtl system \eqref{linear_incompressible_Prandtl} with $k=1$ are derived from \eqref{BC_u_01_ub_01}. To solve it, we still need to know the relation between $u^b_{2,3}$ and $u^0_{2,3}$. This can be done by the analysis of the Knudsen layer, see \eqref{BC_u2_ub_2}.
 
Note that the solution $g_\eps + g^b _\eps$ does not match the boundary condition at $O(\sqrt{\eps}^k)$ for $(k \geq 2)$, and thus the Knudsen layer correction $g^{bb}_\eps$ is required. Inside the Knudsen layer, the new scaled normal coordinate is introduced (as mentioned earlier, the thickness of the Knudsen layer is $O(\eps^2)$):
\begin{equation*}
  \begin{aligned}
    \xi = \tfrac{x_3}{\eps^2} \,.
  \end{aligned}
\end{equation*}
Then the Knudsen boundary expansion is defined as
\begin{equation*}
  \begin{aligned}
    g^{bb}_\eps (t, \bar{x}, \xi, v) \thicksim \sum_{k \geq 2} \sqrt{\eps}^k g^{bb}_k (t, \bar{x}, \xi, v) \,.
  \end{aligned}
\end{equation*}
We also assume the following far-field condition 
\begin{equation}\label{far_condition_Knudsen}
	g^{bb}_k \to 0 \quad \text{as} \quad \! \xi\to \infty \,.
\end{equation}
From plugging $g_\eps + g^b_\eps + g^{bb}_\eps$ into \eqref{BE_g_eps}, we have
\begin{equation}\label{Order_Anal_Knudsen}
\begin{aligned}
\sqrt{\eps}^{-2} :\qquad & v_3 \p_\xi g^{bb}_2 + \L g^{bb}_2 = 0 \,,\\
\sqrt{\eps}^{-1} : \qquad & v_3 \p_\xi g^{bb}_3 + \L g^{bb}_3 = 0 \,,\\
......&\\
\sqrt{\eps}^k :\qquad &  v_3 \p_\xi g^{bb}_{k+4} + \L g^{bb}_{k+4} \\
= & - \p_t g^{bb}_{k-2} - \bar{v} \cdot \nabla_{\bar{x}} g^{bb}_k \\
& + \sum_{\substack{i+j=k+2\,,\\ i, j\ge 0}} \big[ \Gamma (g^0_i + g^{b,0}_i, g^{bb}_j) + \Gamma (g^{bb}_j, g^0_i + g^{b,0}_i) + \Gamma (g^{bb}_i, g^{bb}_j) \big] \\
& + \sum_{\substack{i+4l+j=k+2\,,\\1\le l \le N, i,j\ge 0}} \frac{\xi^l}{l!} \big[ \Gamma (g^{(l)}_i, g^{bb}_j) + \Gamma (g^{bb}_j, g^{(l)}_i)\big] \\
& + \sum_{\substack{i+3l+j=k+2\,,\\1\le l \le N, i,j\ge 0}} \frac{\xi^l}{l!} \big[ \Gamma (g^{b, (l)}_i, g^{bb}_j) + \Gamma (g^{bb}_j, g^{b, (l)}_i)\big]\,,
\end{aligned}
\end{equation}
where the Taylor expansion of $g^b_i$ at $\zeta = 0$ is utilized:
\begin{equation*}
  \begin{aligned}
    g_i^b = g_i^{b,0} + \sum_{1 \leq l \leq N} \tfrac{\xi^l}{l!} g_i^{b, (l)} + \tfrac{\xi^{N+1}}{(N+1)!} \widetilde{g}_i^{b, (N+1)} \,.
  \end{aligned}
\end{equation*}
We used the notation that $g^{bb}_k =0 $ for $k \leq 2$. To solve the Knudsen layer problems and obtain the boundary conditions for the fluid equations, we first consider $k=2$. Recalling \eqref{def_g2} and \eqref{def_gb2}, and applying \eqref{BC_u_01_ub_01}, we derive the system for $g^{bb}_2$
\begin{equation}
	\begin{cases}
		v_3 \p_\xi g^{bb}_2 + \L g^{bb}_2 =0 \,,\\
		(L^R- \alpha L^D) g^{bb}_2 = - (L^R - \alpha L^D) (\rho^0_2 + \rho^{b,0}_2 +( u^0_2 + u^{b,0}_2 )\cdot v +(\theta^0_2 + \theta^{b,0}_2) (\tfrac{|v|^2-3}{2}) ) \sqrt{ \mu}\,,\\
		\lim_{\xi \to \infty} g^{bb}_2 =0\,.
	\end{cases}
\end{equation}
This system is solvable only if 
\begin{equation}\label{BC_u2_ub_2}
		u^0_2 + u^{b,0}_2 =0 \,,
		\quad \theta^0_2 + \theta^{b,0}_2 =0 \,.
\end{equation} This gives the boundary conditions for the fluid variables of $g_2$ and $g^{b}_2$\,. 

Next, we consider $g^{bb}_3$. Recalling \eqref{def_g3} and \eqref{def_gb3}, and applying \eqref{BC_u_01_ub_01}, we derive the system for $g^{bb}_3$:
\begin{equation}\label{eqn_gbb_3}
	\begin{cases}
		v_3 \p_\xi g^{bb}_3 + \L g^{bb}_3 =0 \,,\\
		(L^R - \alpha L^D) g^{bb}_3 = - (L^R - \alpha L^D) (\rho^0_3 + \rho^{b,0}_3 + u^0_3 + u^{b,0}_3 \cdot v +(\theta^0_3 + \theta^{b,0}_3) (\tfrac{|v|^2-3}{2}) ) \sqrt{ \mu}\,,\\
		\qquad \qquad \qquad \qquad + (L^R - \alpha L^D) \hat{\B}_3 \p_\zeta \theta^{b,0}_0 + (L^R - \alpha L^D) \sum_{j=1}^{2} \hat{\A}_{3j} \p_\zeta u^{b,0}_{0,j} \,,\\
		\lim_{\xi \to \infty} g^{bb}_3 =0\,.
	\end{cases}
\end{equation}
The above system, together with the far-field condition $\lim_{\xi \to \infty} g^{bb}_3 = 0$, is overdetermined; see \cite{Bardos-Caflisch-Nicolaenko-1986-CPAM,Coron-Golse-Sulem-1988-CPAM,Golse-Perthame-Sulem-1988-ARMA,Jiang-Luo-Wu-arXiv}. There are four solvability conditions associated with this system, which yield the boundary conditions for the fluid equations. From the isotropic properties of $\L$ and $L^R- \alpha L^D$ (see \cite{Aoki-2017-JSP,He-Jiang-Wu-2024,Jiang-Wu-2025-KRM}), the above system is solvable if and only if 
\begin{equation}
	\begin{cases}
		u^0_{3,i} + u^{b,0}_{3,i} = b_1 \p_\zeta u^{b,0}_{0,i} \quad (i=1,2) \,, \\
		u^{0}_{3,3} + u^{b,0}_{3,3} = 0 \,,\\
		\theta^0_3 + \theta^{b,0}_3 = c_1 \p_\zeta \theta^{b,0}_0 \,,
	\end{cases}
\end{equation}
 where $b_1$ and $c_1$ are the so-called slip coefficients, uniquely determined together with $g^{bb}_3$. This gives the boundary conditions to the linear incompressible Euler system and the Prandtl-type system. Moreover, we know that the boundary conditions of the Prandtl system are of Dirichlet type. 
 
Furthermore, the boundary conditions for $u_k + u^b_k$ and $\theta_k + \theta^b_k$ for $k \geq 4$ can be determined inductively in this manner. Combined with the equations derived above, these boundary conditions allow us to solve for $g_k$, $g^b_k$, and $g^{bb}_k$ step by step.

\subsection{Truncations of the Hilbert expansion}\label{subsec_Truncations}

Our goal is to prove the incompressible Euler limit from the scaled Boltzmann equation by the above Hilbert-type expansion. The key point is to prove that the remainders of expansion will go to zero as the Knudsen number $\eps \to 0$. Mathematically, this means that we seek a special class of solutions to the scaled Boltzmann equation in the regime of sufficiently small Knudsen number $\eps$. The more terms are expanded, the more special the solutions are. We hope that the terms in the expansion are as less as possible.
\begin{equation}\label{Hilbert_truncation}
  \begin{aligned}
    g_\eps = \sum_{k=0}^{13} \sqrt{\eps}^k \big\{  g_{k}(t,x,v) + g^b_k(t, \bar{x},\tfrac{x_3}{\sqrt{\eps}}, v) + g^{bb}_k(t, \bar{x}, \frac{x_3}{\eps^2}, v) \big\} \\
    +\sqrt{\eps}^{8}\tilde{g}_{R,\eps}(t,x,v) \geq 0\,.
  \end{aligned}
\end{equation}
For the above ansatz, we take $N = 9$ in the Taylor expansions before. From plugging \eqref{Hilbert_truncation} into \eqref{BE_g_eps} with the boundary condition \eqref{MBC_g_eps}, we obtain the remainder equation 
\begin{align*}
    \p_t \tilde{g}_{R, \eps} + \tfrac{1}{\eps} v \cdot \nabla_x \tilde{g}_{R, \eps} + \tfrac{1}{\eps^3} \L \tilde{g}_{R,\eps} = \eps^{2} \Gamma(\tilde{g}_{R,\eps}, \tilde{g}_{R,\eps}) 
    + R_\eps + R^b_{\eps} + R^{bb}_\eps \\
    + \eps^{\frac{i-4}{2}} \sum_{i=0}^{13}  [\Gamma(\tilde{g}_{R,\eps}, g_i + g^b_{i} + g^{bb}_i) + \Gamma( g_i + g^b_{i} + g^{bb}_i, \tilde{g}_{R,\eps})]\,, 
\end{align*} 
with Maxwell reflection boundary condition 
\begin{equation*}
  \gamma_- \tilde{g}_{R,\eps} (t,x,v) = (1 - \alpha) \tilde{g}_{R,\eps} (t,x,R_x  v)+ \sqrt{2 \pi \mu} \int_{v^\prime \cdot n>0} (v^\prime \cdot n) \sqrt{\mu(v^\prime)} \tilde{g}_{R,\eps}  (t,x,v^\prime)\d v^\prime\,,
\end{equation*}
where 
\begin{equation}\label{def_R}
  R_\eps = - \eps^{\frac{i - 8}{2}} \sum_{i= 8}^{13} \p_t g_i - \eps^{\frac{i- 10}{2}} \sum_{ i = 10}^{ 13} v \cdot \nabla_x g_i  + \eps^{\frac{i+j - 12}{2}} \sum_{\substack{i+j \geq 12 \\
  1 \leq i, j \leq 13}}\Gamma (g_i, g_j) \,,
\end{equation}
\begin{equation}\label{def_R_b}
  \begin{aligned}
    R^b_{\eps} = &- \eps^{\frac{i - 8}{2}} \sum_{i= 8}^{13} \p_t g^b_i - \eps^{\frac{i  - 10 }{2}} \sum_{ i = 10}^{ 13} \bar{v} \cdot \nabla_{\bar{x}} g^b_i - \eps^{\frac{i-11}{2}} \sum_{ i = 11}^{ 13} v_3 \p_\zeta g^b_i  + \eps^{\frac{i+j- 12}{2}} \sum_{\substack{i+j \geq 12 \\
    1 \leq i, j \leq 13}}\Gamma (g^b_i, g^b_j)\\
    &+ \eps^{\frac{i+j- 12}{2}} \sum_{\substack{i+j \geq 12 \\
    1 \leq i, j \leq 13}}[\Gamma (g^b_i, g^0_j) + \Gamma (g^0_j, g^b_i)] \\
   & +\eps^{\frac{i+j+l - 12}{2}} \sum_{\substack{i+j+l \geq 12 \\
    1 \leq i, j \leq 13, 1 \leq l \leq 9}} \frac{\zeta^l}{ l !} [\Gamma (g^{(l)}_i, g^b_j) + \Gamma (g^b_j, g^{(l)}_i )]\\
    &+\eps^{\frac{i + j - 2}{2}} \sum_{1 \leq i, j \leq 13} \frac{\zeta^{10}}{ 10 !} [\Gamma (g^{(10)}_i, g^b_j) + \Gamma (g_j, g^{(10)}_i )]\,,
  \end{aligned}
\end{equation}
and
\begin{equation}\label{def_R_bb}
  \begin{aligned}
    R^{bb}_{\eps} = & - \eps^{\frac{i - 8}{2}} \sum_{i= 8}^{13} \p_t g^{bb}_i - \eps^{\frac{i - 10}{2}} \sum_{ i = 10}^{13} \bar{v} \cdot \nabla_{\bar{x}} g^{bb}_i   + \eps^{\frac{i+j -12}{2}} \sum_{\substack{i+j \geq 12 \\
    1 \leq i, j \leq 13}}\Gamma (g^{bb}_i, g^{bb}_j)\\
    &+ \eps^{\frac{i+j- 12}{2}} \sum_{\substack{i+j \geq 12 \\
    1 \leq i, j \leq 13}}[\Gamma (g^{bb}_i, g^0_j+ g^{b,0}_j) + \Gamma (g^0_j + g^{b,0}_j, g^{bb}_i)]  \\
    &+\eps^{\frac{i+j+ 4l - 12}{2}} \sum_{\substack{i+j+ 4l \geq 12 \\
    1 \leq i, j \leq 13, 1 \leq l \leq 9}} \frac{\xi^l}{ l !} [\Gamma (g^{(l)}_i, g^{bb}_j) + \Gamma (g^{bb}_j, g^{(l)}_i )]\\
    &+\eps^{\frac{i+j+ 3l - 12}{2}} \sum_{\substack{i+j+ 3l \geq 12 \\
    1 \leq i, j \leq 13, 1 \leq l \leq 9}} \frac{\xi^l}{ l !} [\Gamma (g^{b,(l)}_i, g^{bb}_j) + \Gamma (g^{bb}_j, g^{b,(l)}_i )]\\
    & +\eps^{\frac{i + j + 28}{2}} \sum_{1 \leq i, j \leq 13} \frac{\xi^{10}}{ 10 !} [\Gamma (g^{(10)}_i, g^{bb}_j) + \Gamma (g^{bb}_j, g^{(10)}_i )]\\
    & +\eps^{\frac{i + j + 18}{2}} \sum_{1 \leq i, j \leq 13} \frac{\xi^{10}}{ 10 !} [\Gamma (g^{b,(10)}_i, g^{bb}_j) + \Gamma (g^{bb}_j, g^{b,(10)}_i )]\,.
  \end{aligned}
\end{equation}
where we used the fact that $x_3 = \sqrt{\eps} \zeta = \eps^2 \xi$.

 As illustrated in subsection \ref{subsec_idea}, we need to deal with the singular term \(\tfrac{1}{\eps^2}[\Gamma(\tilde{g}_{R,\eps},g_0 + g^{bb}_0) + \Gamma(g_0 + g^{bb}_0,\tilde{g}_{R,\eps}) ]\). We introduce $(\rho_\eps, u_\eps, \theta_\eps) = (1 + \eps(\rho_0 + \rho^b_0), \eps (u_0 + u^b_0), 1 + \eps (\theta_0 + \theta^b_0) )$ and the local Maxwellian $\mu_\eps$ given in \eqref{def_mu_eps}, where $(\rho_0, u_0 ,\theta_0)$ solves the incompressible Euler system \eqref{incompressible_Euler_0} and $(\rho^b_0, u^b_0 ,\theta^b_0)$ solves the incompressible Prandtl equation \eqref{nonlinear_Prandtl}. A Taylor expansion with respect to $\eps$ yields
\begin{equation*}
  \mu_\eps = \mu + \eps (g_0 + g^b_0) \sqrt{\mu} + \eps^2 r,
\end{equation*}
where $r = O(1)$ is the second order remainder.

Define $g_{R,\eps} = \sqrt{\tfrac{\mu}{\mu_\eps}} \tilde{g}_{R,\eps}$, we derive the equation for $g_{R,\eps}$:
\begin{equation}\label{remainder_eqn}
 \begin{aligned}
   \p_t g_{R,\eps} + \tfrac{1}{\eps} v \cdot \nabla_x g_{R,\eps} + \tfrac{1}{\eps^3} \L_\eps g_{R,\eps} + \tfrac{(\p_t + \frac{1}{\eps} v \cdot \nabla_x ) {\mu_\eps}}{2 \mu_\eps} g_{R,\eps} : = S :=\sum_{i=1}^{4}S_i\,,
 \end{aligned}
\end{equation}
where 
\begin{equation}\label{S_1_S_4}
  \begin{aligned}
    S_1 =& \eps^{2} \Gamma_\eps (g_{R,\eps}, g_{R,\eps})\,,\\
    S_2 =& \eps^{\frac{i-4}{2}} \sum_{i=1}^{13}  [\Gamma_\eps(g_{R,\eps}, \sqrt{\tfrac{\mu}{\mu_\eps}}(g_i + g^b_{i} + g^{bb}_i)) + \Gamma_\eps ( \sqrt{\tfrac{\mu}{\mu_\eps}}(g_i + g^b_{i} + g^{bb}_i), g_{R,\eps})] \,,\\
    S_3 =& - \tfrac{1}{\eps} [\Gamma_\eps(\tfrac{r}{\sqrt{\mu_\eps}},g_{R,\eps}) + \Gamma_\eps(g_{R,\eps},\tfrac{r}{\sqrt{\mu_\eps}})] \,,\\
    S_4 =& \sqrt{\tfrac{\mu}{\mu_\eps}}(R_\eps + R^b_{\eps} + R^{bb}_\eps )\,.
  \end{aligned}
\end{equation}
where $\L_\eps, \Gamma_\eps$ are defined in \eqref{def_L_eps_Gamma_eps}.
As for the boundary condition for $g_{R,\eps}$, we observe from \eqref{BC_u_01_ub_01}, together with the identities $\rho_0 + \theta_0 =0$ and $\rho^b_0 + \theta^b_0 =0$, that $\mu_\eps = \mu$ on the boundary. Therefore, the boundary condition for $g_{R,\eps}$ is given by
\begin{equation}\label{g_R_BC}
    \gamma_- g_{R,\eps} (t,x,v) = (1 - \alpha) g_{R,\eps} (t,x,R_x  v)+ \alpha P_\gamma g_{R,\eps} (t,x,v)\,.
\end{equation}
Here, we recall that $R_x v = (\bar{v}, -v_3)$, and $P_\gamma$ is given in \eqref{def_P_gamma}.

\section{Solutions to the constructed expansions}\label{Sec_solution_expansion}
\subsection{The nonlinear incompressible Euler system \eqref{incompressible_Euler_0}}
\begin{equation}
  \begin{cases}
    \p_t u_0 + u_0 \cdot \nabla_x u_0 + \nabla_x p_0 =0\,,\\
    \p_t \theta_0 + u_0 \cdot \nabla_x \theta_0 = 0\,,\\
    \operatorname{div} u_0 = 0 \,,
  \end{cases}
 \end{equation}
with the boundary condition 
\begin{equation}
  u_{0,3}(t,\bar{x},0)  = 0\,,
\end{equation}
and the initial conditions \eqref{IC_incompressible_Euler}.
\subsection{The linear incompressible Euler system \eqref{linear_Euler}}
\begin{equation}
	\begin{cases}
		\p_t \rho_k + \operatorname{div} u_{k+2} =0\,,\\
		\p_t u_k + \operatorname{div} ( u_k \otimes u_0 + u_0 \otimes u_k) + \nabla_x (\rho_{k+2} + \theta_{k+2}) - \tfrac{2}{3}\nabla_x (u_0 \cdot u_k) = \mathcal{F}_{k-1} \,,\\
		\tfrac{3}{2}\p_t \theta_k + \operatorname{div} u_{k+2} + \tfrac{5}{2} \operatorname{div} (u_k \theta_0 + u_0 \theta_k) = \mathcal{G}_{k-1}\,,
	\end{cases}
 \end{equation}
with the boundary condition 
\begin{equation}
  u_{k,3}(t,\bar{x},0) = d_k(t,\bar{x})\,, 
\end{equation}
where $d_k(t,\bar{x}) \ (k\geq 1)$ are determined by $g_j\ (j\leq k-1)$, $g^b_j\ (j\leq k-1)$ and $u^b_{k,3}$. Furthermore, the initial conditions are given by \eqref{IC_incompressible_Euler}.

\subsection{The nonlinear incompressible Prandtl system \eqref{nonlinear_Prandtl}}
\begin{equation}
	\begin{cases}
		\sum_{j=1}^{2}\p_j u^b_{0,j} + \p_\zeta u^b_{1,3} =0\,,\\
		\partial_tu^b_{0,i} +\sum_{j=1}^{2} u^b_{0,j} \p_j u^b_{0,i} + u^b_{1,3} \p_\zeta u^b_{0,i} - \kappa_1 \p^2_\zeta u^b_{0,i} \\
		\qquad\qquad + \p_j \sum_{j=1}^{2} (u^b_{0,i} u^0_{0,j}) + \sum_{j=1}^{2} u^b_{0,j} \p_j u^0_{0,i} + \p_\zeta  [ u^b_{0,i}( u^0_1 + \zeta u^{(1)}_0)_3] = 0 \quad (i=1,2)\,,\\
		\p_t \theta^b_0 +  \sum_{j=1}^{2}   u^b_{0,j} \p_j \theta^b_0 + u^b_{1,3} \p_\zeta \theta^b_0 -  \kappa_2 \p^2_\zeta \theta^b_0 + \sum_{j=1}^{2} \p_j (u^0_{0,j} \theta^b_0) \\
    \qquad\qquad+ \sum_{j=1}^{2} u^b_{0,j} \p_j \theta^0_0 + \p_\zeta [\theta^b_{0}(u^0_1 + \zeta u^{(1)}_0)_3] =0\,,\\
    \lim_{\zeta \to \infty}( {u}^b_{0,1}, {u}^b_{0,2}, u^b_{1,3},\theta^b_0) =0\,,
	\end{cases}
\end{equation}
with the Dirichlet boundary condition 
\begin{equation}
  u^b_{0,i} = - u^0_{0,i} \ (i=1,2)\,,
  \theta^b_0 = - \theta^0_0\,,
\end{equation}
and the initial conditions \eqref{IC_Prandtl} with $k=0$.
\subsection{The linear incompressible Prandtl system \eqref{linear_incompressible_Prandtl}}
\begin{equation}
  \begin{cases}
    \sum_{j=1}^{2}\p_j u^b_{k,j} + \p_\zeta u^b_{k+1,3} = -\p_t \rho_{k-2}^b\,,\\
    \partial_tu^b_{k,i}  + \sum_{j=1}^{2} (u^b_{0}+ u^0_0)_j \p_j u^b_{k,i} + u^b_{k+1,3} \p_\zeta (u^b_{0} + u^0_0)_i - \kappa_1 \p^2_\zeta u^b_{k,i} +  \sum_{j=1}^{2} u^b_{k,j} \p_j(u^b_0 +u^0_{0})_i\\
    \qquad\quad + \sum_{j=1}^{2} u^b_{k,i} \p_j u^0_{0,j} + \p_\zeta u^b_{k,i} u^b_{1,3} + \p_\zeta [ u^0_{k+1,3}u^b_{0,i} + u^b_{k,i}( u^0_1 + \zeta u^{(1)}_0)_3] =\mathrm{f}^b_{k-1,i} \ (i=1,2)\,,\\
    \p_t \theta^b_k - \kappa_2 \p^2_\zeta \theta^b_k +  \sum_{j=1}^{2} \p_j  [ (u^b_{0}+ u^0_0)_j \theta^b_k ] + \p_\zeta [\theta^b_{k}(u^b_1 + u^0_1 + \zeta u^{(1)}_0)_3] = \mathrm{g}^b_{k}\,,\\
    \lim_{\zeta \to \infty}( {u}^b_{k,1}, {u}^b_{k,2}, u^b_{k+1,3}, \theta^b_k )(t,\bar{x},\zeta)=0\,,
  \end{cases}
\end{equation}
with the Dirichlet boundary condition 
\begin{equation}
  u^b_{k,i} = \u_k\,,
  \theta^b_k = \Theta_k\,,
\end{equation}
where $\u_k,\Theta_k$ are quantities determined by $g^b_j\ (j\leq k-1)$ and $g_j\ (l \leq k)$, and are already known. Furthermore, the initial conditions are given by \eqref{IC_Prandtl}.

\subsection{The Knudsen layer equation} Inside the Knudsen layer, the following system is derived:
\begin{equation}\label{Knudsen_layer_eqn}
  \begin{cases} 
    v_3 \p_\xi g^{bb}_k + \L g^{bb}_k = S^{bb}_k \,,\\
     \gamma_- g^{bb}_k (t,\bar{x},0,v) = (1 - \alpha) g^{bb}_k (t,\bar{x},0,R_x  v)+ \sqrt{2 \pi \mu} \int_{v^\prime \cdot n>0} (v^\prime \cdot n) \sqrt{\mu(v^\prime)} g^{bb}_k  (t,\bar{x},0,v^\prime)\d v^\prime + f^{bb}_k\,,\\
     \lim_{\xi \rightarrow \infty} g^{bb}_k = 0 \,. 
  \end{cases}
\end{equation}
According to \cite{Jiang-Luo-Wu-arXiv}, the solution to the Knudsen layer equation \eqref{Knudsen_layer_eqn} exhibits exponential decay in both the normal variable $\xi$ and the velocity variable $v$, i.e., 
$$\|e^{c_1 \xi + c_2|v|^2} g^{bb}_k \|_\infty \leq C({f^{bb}_k ,S^{bb}_k}) $$
for some $c_1,c_2>0$, where the constant $C({f^{bb}_k ,S^{bb}_k})$ depends on $f^{bb}_k ,S^{bb}_k$.\footnote{ In \cite{Jiang-Luo-Wu-arXiv}, the authors considered the cases where $S^{bb}_k \in \N^\perp$. We can decompose $S^{bb}_k = S^{bb}_{k,1} + S^{bb}_{k,2}$, where $S^{bb}_{k,1} \in \N$ and the corresponding solution can be explicitly constructed. The part with $S^{bb}_{k,2} \in \N^\perp$ then satisfies the assumptions required to apply the result in \cite{Jiang-Luo-Wu-arXiv}. We refer the reader to \cite{Bardos-Caflisch-Nicolaenko-1986-CPAM, Guo-Huang-Wang-2021-ARMA} for more details.} Moreover, tangential and time derivatives of the solution to \eqref{Knudsen_layer_eqn} can be readily obtained due to the linearity and regularity properties of the system.

The following proposition follows directly from the techniques developed in the Appendix of \cite{Wang-Wang-Zhang-ARMA}. For simplicity, we omit the detailed proof.
\begin{proposition}\label{Prop_g_k_g_b_k}
  Let $(\rho^{in}_k , u^{in}_k , \theta^{in}_k)$ and $(\rho^{b,in}_k , u^{b,in}_k , \theta^{b,in}_k) $ satisfy
  \begin{equation*}
  \begin{aligned}
     \sum_{\m \in \mathbb{N}^3_0, \m=(\m_1, \m_2, \m_3)} \tfrac{(2\sigma(0))^{2 |\m|}}{(\m !)^2}  \|\p_1^{\m_1}\p_2^{\m_2}\p_3^{\m_3}(\rho^{in}_k , u^{in}_k , \theta^{in}_k) \|_{L^\infty_x \cap L^2_x} \leq C \,,\\
      \sum_{\m \in \mathbb{N}^3_0, \m=(\m_1, \m_2, \m_3)}\tfrac{(2\sigma(0))^{2 |\m|}}{(\m !)^2}  \|\p_1^{\m_1}\p_2^{\m_2}\p_3^{\m_3}(\rho^{b,in}_k , u^{b,in}_k , \theta^{b,in}_k) \|_{L^\infty_x \cap L^2_x} \leq C \,,
  \end{aligned}
  \end{equation*}
for $1 \leq k \leq 13$. Then there exists $T_0>0 $ such that the corresponding solutions $(\rho_k , u_k , \theta_k) $ and $(\rho^{b}_k , u^{b}_k , \theta^{b}_k)$ exist in the time interval $[0,T_0$. Moreover, these solutions satisfy
    \begin{equation*}
  \begin{aligned}
     \sum_{\m \in \mathbb{N}^3_0, \m=(\m_1, \m_2, \m_3)} \tfrac{(\sigma(0))^{2 |\m|}}{(\m !)^2}  \|\p_1^{\m_1}\p_2^{\m_2}\p_3^{\m_3}(\rho_k , u_k , \theta_k) \|_{L^\infty_x \cap L^2_x} \leq C_0 \,,\\
      \sum_{\m \in \mathbb{N}^3_0, \m=(\m_1, \m_2, \m_3)}\tfrac{(\sigma(0))^{2 |\m|}}{(\m !)^2}  \|\p_1^{\m_1}\p_2^{\m_2}\p_3^{\m_3}(\rho^{b}_k , u^{b}_k , \theta^{b}_k) \|_{L^\infty_x \cap L^2_x} \leq C_0 \,.
  \end{aligned}
  \end{equation*}
As a direct consequence, applying the properties of the inverse operator $\L^{-1}$ established in \cite{JLT-2022-arXiv}, we derive the following estimates:
\begin{equation}\label{R_analytic}
\begin{aligned}
    \sum_{\m \in \mathbb{N}_0^2} \tfrac{\sigma(t)^{2 |\m|}}{(\m !)^2}  \|\mu^{-a} \p^\m (g_k, g^b_k,g^{bb}_k) \|_{L^\infty_{x,v}\cap L^2_{x,v}} \leq C_0  \,,\\
   \sum_{\m \in \mathbb{N}_0^2} \tfrac{\sigma(t)^{2 |\m|}}{(\m !)^2}  \|\mu^{-a} \p^\m (R_\eps , R^b_\eps, R^{bb}_\eps) \|_{L^\infty_{x,v}\cap L^2_{x,v}} \leq C_0\,.
\end{aligned}
\end{equation}
 for some $0<a <\tfrac{1}{2}$. 
\end{proposition}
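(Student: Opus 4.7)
The plan is to solve the cascade of Euler, Prandtl, and Knudsen systems inductively in $k$ using an abstract Cauchy--Kovalevskaya / Sammartino--Caflisch analytic framework with a radius of analyticity $\sigma(t)=\sigma(0)-Kt$ that shrinks linearly in time, where $K$ is chosen large depending on the norms of the forcing. The starting datum has analytic radius $2\sigma(0)$, which provides the ``buffer'' that allows us to absorb the $\p$-losses coming from the tangential transport terms, and still close the estimates with the smaller radius $\sigma(t)$ on a time interval $[0,T_0]$ with $T_0\le\sigma(0)/(2K)$.

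First I would handle the base layer $k=0$. The nonlinear Euler system \eqref{incompressible_Euler_0} with $u_{0,3}|_{x_3=0}=0$ and shear initial data is solved in the analytic class: for shear flows the nonlinear terms simplify drastically and the standard Alinhac--Métivier / Kukavica--Vicol analytic energy method yields an analytic solution on $[0,T_0]$ with the required analytic bound. Given $u_0^0$, $\theta_0^0$, the nonlinear Prandtl system \eqref{nonlinear_Prandtl} with Dirichlet data $\bar u_0^{b,0}=-\bar u_0^0$ and $\theta_0^{b,0}=-\theta_0^0$ is solved by the Sammartino--Caflisch theorem in the analytic-in-$(\bar x,\zeta)$ class. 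The key point is that although the Prandtl term $u^b_{1,3}\p_\zeta u^b_{0,i}$ loses one normal derivative (because $u^b_{1,3}=-\int_0^\zeta \sum_{j=1}^2\p_j u^b_{0,j}\,d\zeta'$ loses one tangential derivative), this loss is controlled by the shrinking radius of analyticity via the standard inequality $\sum_{|\m|} \tfrac{\sigma^{2|\m|}}{(\m!)^2}|\m| \le \tfrac{1}{\sigma\dot\sigma}\!\cdot\!\sum_{|\m|}\tfrac{\sigma^{2|\m|}}{(\m!)^2}$ applied to the energy inequality; choosing $K$ sufficiently large absorbs the derivative loss. The heat-type diffusion $-\kappa_i\p_\zeta^2$ gives a parabolic gain that handles the $\p_\zeta$ part of the forcing coming from the Euler trace.

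For the inductive step $k\ge 1$, I would assume $(\rho_j,u_j,\theta_j)$ and $(\rho_j^b,u_j^b,\theta_j^b)$ for $j\le k-1$ are already constructed with analytic estimates of radius $\sigma(t)$. The boundary data $u_{k,3}^0=-u^{b,0}_{k,3}$ for the linear Euler system is then already known (computed from the lower-order Prandtl profile), and \eqref{linear_Euler} is a linear transport--acoustic system with analytic coefficients and analytic source $\mathcal F_{k-1},\mathcal G_{k-1}$ built from \eqref{def_J_k-1}--\eqref{def_I_k-1}; it is solved by the same analytic propagation argument. Then the linear Prandtl system \eqref{linear_incompressible_Prandtl} has analytic coefficients built from $u_0^b,u_0^0,u_1^b,u_1^0,u_0^{(1)}$, analytic Dirichlet data from \eqref{BC_u_01_ub_01}, and analytic source $\mathrm f^b_{k-1,i},\mathrm g^b_k$; the heat operator plus the same analytic absorption trick yield existence on the same interval $[0,T_0]$. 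The pressure $p^b_{k+2}$ is recovered by \eqref{pb_k+2}. At this stage the fluid macroscopic profiles are controlled in the stated analytic norm.

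Once the fluid quantities are in hand, the kinetic profiles $g_k$ come from $\P g_k$ (determined above) plus $\P^\perp g_k$ recovered algebraically from expressions like $\hat\A,\hat\B$ acting on derivatives of lower-order fluid quantities; the $\mu^{-a}$-weighted $L^\infty_v\cap L^2_v$ bound follows from the pointwise--in-$v$ decay properties of $\L^{-1}$ on $\N^\perp$ restricted to polynomial-times-$\sqrt\mu$ sources, as established in the paper \cite{JLT-2022-arXiv} cited just above the statement. The Knudsen profile $g_k^{bb}$ satisfies the boundary-layer ODE \eqref{Knudsen_layer_eqn} whose solvability gives the four boundary conditions on $u_k^0+u_k^{b,0},\,\theta_k^0+\theta_k^{b,0}$ (already used in the induction), and whose solution enjoys pointwise exponential decay $e^{c_1\xi+c_2|v|^2}$ from the results of \cite{Jiang-Luo-Wu-arXiv}; tangential and time derivatives inherit the same bound by linearity, giving the desired analytic bound in $(t,\bar x)$. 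Finally, the bound on $R_\eps,R_\eps^b,R_\eps^{bb}$ defined in \eqref{def_R}--\eqref{def_R_bb} follows by plugging the analytic estimates for $g_i,g_i^b,g_i^{bb}$ into the bilinear expressions and using that $\Gamma$ preserves the $\mu^{-a}$ weight for $a<\tfrac12$.

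The main obstacle is the derivative loss in the nonlinear Prandtl layer at $k=0$ arising from the transport term $u^b_{1,3}\p_\zeta u^b_{0,i}$ together with the coupling term $\p_\zeta[u^b_{0,i}(u_1^0+\zeta u_0^{(1)})_3]$; this is precisely where the analytic framework with shrinking radius $\sigma(t)$ is forced upon us. Once this nonlinear base case is established, the inductive linear problems are essentially routine applications of the same analytic energy scheme with sources.
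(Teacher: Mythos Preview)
Your proposal is correct and matches the paper's approach: the paper itself does not give a proof at all, stating only that ``the following proposition follows directly from the techniques developed in the Appendix of \cite{Wang-Wang-Zhang-ARMA}'' and omitting the details; your outline of the Sammartino--Caflisch analytic scheme with shrinking radius is precisely what that reference does. One minor remark: since the leading-order data are shear (functions of $x_3$ and $\zeta$ only), the nonlinear Prandtl layer at $k=0$ is in fact $\bar x$-independent, so $\sum_j\p_j u^b_{0,j}=0$ forces $u^b_{1,3}\equiv 0$ and the ``main obstacle'' you flag actually disappears at that level; the genuine derivative loss first appears in the linear Prandtl problems for $k\ge 1$, where it is handled exactly as you describe.
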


\section{Uniform estimates for the remainder equation}\label{Sec_remainder_uniform}

This section is devoted to establishing uniform estimates for the remainder term. Due to the loss of derivatives in the equation, our analysis is carried out in an analytic function space.

We first recall the notation:
\begin{equation*}
  \p^\m = \p^{\m_1}_1 \p^{\m_2}_2  \quad \textrm{for} \quad \!  \m = (\m_1, \m_2) \in \mathbb{N}_0^2 \,.
\end{equation*}
Throughout this section, for notational simplicity, we denote $g = g_{R,\eps}$ and $h=h_{R,\eps}$. We define $g_\m$ by $\p^\m g$. We also recall the definitions of the analytic norms introduced in \eqref{def_X_Y}. 

Since our fluid quantities $(\rho_0,u_0,\theta_0)$ and $(\rho^b_0,u^b_0,\theta^b_0)$ are the shear flow, the differential $\partial_1$ and $\p_2$ communicate with $\L_\eps$ and $\Gamma_\eps$. Acting $\p^\m $ on \eqref{remainder_eqn}, we obtain
\begin{equation}\label{remainder_eqn_alpha}
 \begin{aligned}
   \p_t g_\m + \tfrac{1}{\eps} v \cdot \nabla_x g_\m + \tfrac{1}{\eps^3} \L_\eps g_\m + \tfrac{(\p_t + \frac{1}{\eps} v \cdot \nabla_x ) {\mu_\eps}}{2 \mu_\eps} g_\m = S_\m \,,
 \end{aligned}
\end{equation}
subject to the Maxwell reflection boundary condition
\begin{equation}\label{g_m_BC}
    \gamma_- g_{\m} (t,x,v) = (1 - \alpha) g_{\m} (t,x,R_x  v)+ \alpha P_\gamma g_\m(t,x,v) \,.
\end{equation}
Here, the local Maxwellian $\mu_\eps$ is defined in \eqref{def_mu_eps}:
\begin{equation*}
  \mu_\eps(v) : = \tfrac{\rho_\eps}{(2 \pi \theta_\eps )^{\frac{3}{2  }}}e^{-\frac{|v - u_\eps|^2}{2 \theta_\eps}} \,,
\end{equation*}
with $(\rho_\eps, u_\eps, \theta_\eps) = (1 + \eps(\rho_0 + \rho^b_0), \eps (u_0 + u^b_0), 1 + \eps (\theta_0 + \theta^b_0) )$.  
Recall the definition of $\chi_j$ for $j = 0,1,2,3,4$ given in \eqref{def_chi_0_4}. The macroscopic quantities $(a_\m,b_\m,c_\m) = \p^\m(a,b,c)$ are given by
\begin{equation}\label{def_am_bm_cm}
  a_\m = \int_{\R^3} g_\m \chi_0 \d v\,,\  b_{\m,j} = \int_{\R^3} g_\m \chi_j \d v \ (j=1,2,3)\,,\ c_\m = \tfrac{2}{3} \int_{\R^3} g_\m \chi_4 \d v \,.
\end{equation} 
We also adopt the following notations
$$\A_{ij}(v_\eps) = (v_{\eps,i} v_{\eps,j} - \tfrac{|v_\eps|^2}{3} \delta_{ij} ) \sqrt{\mu(v_\eps)}\,, \quad \B_i(v_\eps) = \tfrac{(|v_\eps|^2-5)v_{\eps,i}}{2}\sqrt{\mu(v_\eps)}\,,$$
where $v_\eps = \tfrac{v - u_\eps}{\sqrt{\theta_\eps}}$.

\subsection{$L^2$ estimates in analytic spaces} In this subsection, our objective is to derive the $L^2$ analytic estimates for $g$. We start by deriving the equations of the macroscopic variables $(a_\m,b_\m,c_\m)$.
\begin{lemma}
  It follows that
  \begin{equation}\label{eqn_am_bm_cm}
    \begin{cases}
      &\eps \p_t a_\m  + \sqrt{\theta_\eps} \nabla_x \cdot b_\m  = - \nabla_x \cdot (u_\eps a_\m ) + \mathfrak{f}_{\m} \,,\\
      &\eps \p_t b_\m  + \sqrt{\theta_\eps} \nabla_x (a_\m  + c_\m ) = - \nabla_x \cdot (u_\eps \otimes b_\m )  -\nabla_x \cdot \int_{\R^3}  \theta_\eps^{-\frac{1}{4}} \A(v_\eps) \P_\eps^\perp g_\m  \d v + \mathfrak{g}_{\m} \,,\\
      &\eps \p_t c_\m  + \tfrac{2}{3}\sqrt{\theta_\eps} \nabla_x \cdot b_\m  = - \nabla_x \cdot (u_\eps c_\m ) - \tfrac{2}{3} \nabla_x \cdot \int_{\R^3} \theta_\eps^{-\frac{1}{4}} \B(v_\eps) \P_\eps^\perp g_\m  \d v  + \mathfrak{h}_{\m} \,,\\
    \end{cases}
  \end{equation}
  where the scalar functions $\mathfrak{f}_\m(t,x) , $ and $ \mathfrak{h}_\m (t,x)$, as well as the vector function $\mathfrak{g}_\m (t,x)$, are defined in \eqref{def_f_1alpha}, \eqref{def_f_3alpha}  and \eqref{def_f_2alpha}, respectively. Moreover, it holds that 
  \begin{equation}\label{fm_gm_hm_L2}
    \|\mathfrak{f}_\m , \mathfrak{g}_\m , \mathfrak{h}_\m  \|_{L^2_x} \leq C(\sqrt{\eps} \|g_\m  \|_2 + \eps \|S_{4,\m} \|_2)\,.
  \end{equation}
\end{lemma}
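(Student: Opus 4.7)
The strategy is the standard one for extracting macroscopic balance laws from a kinetic equation: test \eqref{remainder_eqn_alpha} against the basis elements $\chi_0, \chi_1, \chi_2, \chi_3, \tfrac{2}{3}\chi_4$ of $\N_\eps$, and integrate in $v\in\R^3$. Because each $\chi_j\in\N_\eps$, the singular collision term $\tfrac{1}{\eps^3}\L_\eps g_\m$ drops out immediately. Multiplying by $\eps$ to balance the transport, each identity takes the schematic form
\begin{equation*}
\eps\p_t\!\int\!\chi_j g_\m\d v + \int\!(v\chi_j)\cdot\nabla_x g_\m\d v + \int\!\chi_j\tfrac{(\eps\p_t + v\cdot\nabla_x)\mu_\eps}{2\mu_\eps}g_\m\d v = \eps\!\int\!\chi_j S_\m\d v + \eps\!\int(\p_t\chi_j)g_\m\d v,
\end{equation*}
with a spatial integration by parts to be performed on the transport piece.

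To extract the fluid structure displayed in \eqref{eqn_am_bm_cm}, I would expand $v = u_\eps + \sqrt{\theta_\eps}\,v_\eps$ with $v_\eps = (v-u_\eps)/\sqrt{\theta_\eps}$, and use the elementary algebraic identities that follow directly from \eqref{def_chi_0_4},
\begin{equation*}
v_k\chi_0 = u_{\eps,k}\chi_0 + \sqrt{\theta_\eps}\,\chi_k, \qquad v_k\chi_i = u_{\eps,k}\chi_i + \theta_\eps^{-1/4}\A_{ki}(v_\eps) + \delta_{ki}\sqrt{\theta_\eps}\big(\chi_0 + \tfrac{2}{3}\chi_4\big),
\end{equation*}
together with the analogous formula $v_k\chi_4 = u_{\eps,k}\chi_4 + \theta_\eps^{-1/4}\B_k(v_\eps) + \sqrt{\theta_\eps}\,\chi_k$. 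Splitting $g_\m = \P_\eps g_\m + \P_\eps^\perp g_\m$ and using the orthogonality relations ($\int\chi_i\chi_j\d v = \delta_{ij}$ for $i,j\le 3$, and $\int\chi_4^2\d v = \tfrac{3}{2}$), the $\P_\eps$ contribution reconstructs exactly the main terms $\sqrt{\theta_\eps}\nabla_x\cdot b_\m$, $\sqrt{\theta_\eps}\nabla_x(a_\m+c_\m)$, $\tfrac{2}{3}\sqrt{\theta_\eps}\nabla_x\cdot b_\m$ and the conservative drifts $\nabla_x\cdot(u_\eps a_\m), \nabla_x\cdot(u_\eps\otimes b_\m), \nabla_x\cdot(u_\eps c_\m)$, while the $\P_\eps^\perp$ part produces the $\theta_\eps^{-1/4}\A(v_\eps)$ and $\theta_\eps^{-1/4}\B(v_\eps)$ fluxes appearing on the right-hand side of \eqref{eqn_am_bm_cm}. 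Everything else—the time/space derivatives of $\chi_j$, the weight factor acting on $g_\m$, and the projection $\eps\int\chi_j S_\m\d v$—is collected into $\mathfrak{f}_\m, \mathfrak{g}_\m, \mathfrak{h}_\m$.

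For the bound \eqref{fm_gm_hm_L2}, I would use three ingredients. \emph{(i)} Since $(\rho_\eps-1, u_\eps, \theta_\eps-1) = O(\eps)$ in amplitude, all time and tangential derivatives of $\mu_\eps$ and $\chi_j$ are $O(\eps)$, while normal derivatives inherit the Prandtl scaling $\eps\p_3(\rho^b_0, u^b_0, \theta^b_0) = \sqrt{\eps}\,\p_\zeta(\rho^b_0, u^b_0, \theta^b_0)$, which is $O(\sqrt{\eps})$. Hence $(\eps\p_t + v\cdot\nabla_x)\mu_\eps/(2\mu_\eps) = O(\sqrt{\eps})$ pointwise, the polynomial factors in $v$ being absorbed by the residual $\sqrt{\mu_\eps}$ in the integrand; a single Cauchy--Schwarz yields the $\sqrt{\eps}\|g_\m\|_2$ contribution. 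The same estimate handles commutator-type pieces such as $b_\m\cdot\nabla_x\sqrt{\theta_\eps}$ and $\int(v\cdot\nabla_x\chi_j)g_\m\d v$. \emph{(ii)} For the source, $S_{1,\m}, S_{2,\m}, S_{3,\m}$ from \eqref{S_1_S_4} are all bilinear expressions of the form $\Gamma_\eps(\cdot,\cdot)$, and the conservation identity $\int\chi_j\Gamma_\eps(f,g)\d v = 0$ holds because each $\chi_j/\sqrt{\mu_\eps}$ is one of the classical collision invariants $1, v, |v|^2$ of $B$; thus only $\eps\int\chi_j S_{4,\m}\d v$ survives, bounded by $C\eps\|S_{4,\m}\|_2$. \emph{(iii)} The remaining $\eps\int(\p_t\chi_j)g_\m\d v$ term is in fact $O(\eps^2)\|g_\m\|_2$ since $\p_t\chi_j = O(\eps)$, which is stronger than needed.

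The main obstacle is the algebraic bookkeeping in the second step: one has to verify that all the $u_\eps$– and $\sqrt{\theta_\eps}$–pieces produced when expanding $v\chi_j$ against $\P_\eps g_\m$ recombine cleanly into the conservative drifts $\nabla_x\cdot(u_\eps a_\m), \nabla_x\cdot(u_\eps\otimes b_\m), \nabla_x\cdot(u_\eps c_\m)$ on the right of \eqref{eqn_am_bm_cm}, and that all leftover $\nabla_x u_\eps, \nabla_x\theta_\eps$ terms cancel against the matching pieces generated by the weight $(\p_t + \tfrac{1}{\eps}v\cdot\nabla_x)\mu_\eps/(2\mu_\eps)$ modulo genuinely $O(\sqrt{\eps})$–small remainders. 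This is the $L^2$–part of the De Masi--Esposito--Lebowitz calculation, routine in principle, but delicate because $\chi_j$ itself depends on $(\rho_\eps, u_\eps, \theta_\eps)$ and every derivative therefore generates a commutator-type contribution that must be checked explicitly.
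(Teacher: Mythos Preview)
Your proposal is correct and follows essentially the same route as the paper: test \eqref{remainder_eqn_alpha} against $\chi_0,\chi_j,\tfrac{2}{3}\chi_4$, use $\L_\eps g_\m,\Gamma_\eps(\cdot,\cdot)\in\N_\eps^\perp$ to kill the collision and $S_{1,\m},S_{2,\m},S_{3,\m}$ contributions, write $v=u_\eps+\sqrt{\theta_\eps}\,v_\eps$, and read off the fluxes. Your algebraic identities for $v_k\chi_j$ are exactly what the paper uses implicitly when it computes $\nabla_x\cdot\int v\chi_j g_\m\,\d v$ directly.

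One remark on your ``main obstacle'': there is actually no cancellation to check. Because $(\rho_\eps-1,u_\eps,\theta_\eps-1)=O(\eps)$ in amplitude, \emph{every} spatial or time derivative of $\chi_j$ and of $\mu_\eps$ is already $O(\sqrt{\eps})$ at worst (the normal derivative through the Prandtl profile). Thus each leftover piece---$b_\m\cdot\nabla_x\sqrt{\theta_\eps}$, $\int v\cdot\nabla_x\chi_j\,g_\m\,\d v$, the weight term---is individually $O(\sqrt{\eps})\|g_\m\|_2$, and the paper simply \emph{defines} $\mathfrak{f}_\m,\mathfrak{g}_\m,\mathfrak{h}_\m$ to be the sum of these leftovers (see \eqref{def_f_1alpha}, \eqref{def_f_2alpha}, \eqref{def_f_3alpha}). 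No De Masi--Esposito--Lebowitz--type cancellation between the transport commutators and the $\tfrac{(\p_t+\frac{1}{\eps}v\cdot\nabla_x)\mu_\eps}{2\mu_\eps}$ weight is needed here; both go straight into the remainder.
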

\begin{proof} We first derive the equation for $a_\m $. Multiplying \eqref{remainder_eqn_alpha} by $\eps \chi_0$, using the fact that $\L_\eps f,\Gamma_\eps(f,g) \in \mathcal{N}_\eps^\perp$, we obtain
\begin{equation}
 \eps \tfrac{\d}{\d t} \int_{\R^3} g_\m  \chi_0 \d v - \eps \int_{\R^3} g_\m  \p_t \chi_0 \d v + \int_{\R^3} v \cdot \nabla_x g_\m  \chi_0 \d v + \eps \int_{\R^3} \tfrac{(\p_t + \frac{1}{\eps} v \cdot \nabla_x) \mu_\eps}{2 \mu_\eps} g_\m \chi_0 \d v = \eps \int_{\R^3} S_{4,\m}  \chi_0 \d v\,.
\end{equation}
Noting that $v = \sqrt{\theta_\eps} v_\eps + u_\eps$, it follows that
\begin{equation}
  \begin{aligned}
    \int_{\R^3} v \cdot \nabla_x g_\m  \chi_0 \d v = &\nabla_x \cdot \int_{\R^3} v  g_\m  \chi_0 \d v - \int_{\R^3} v \cdot  \nabla_x \chi_0 g_\m   \d v \\
    = & \nabla_x \cdot \int_{\R^3} (\sqrt{\theta_\eps} v_\eps + u_\eps)  g_\m  \chi_0 \d v - \int_{\R^3} v \cdot  \nabla_x \chi_0 g_\m   \d v \\
    =& \nabla_x \cdot (\sqrt{\theta_\eps} b_\m ) + \nabla_x \cdot (u_\eps a_\m ) -  \int_{\R^3} v \cdot  \nabla_x \chi_0 g_\m   \d v \,.
  \end{aligned}
\end{equation}
It thereby holds
  \begin{equation}\label{def_f_1alpha}
    \begin{aligned}
      \eps \p_t a_\m  + \sqrt{\theta_\eps} \nabla_x \cdot b_\m  =& - \nabla_x \cdot (u_\eps a_\m ) - b_\m \cdot \nabla_x \sqrt{\theta_\eps} + \int_{\R^3} v \cdot  \nabla_x \chi_0 g_\m   \d v \\
      &+ \eps \int_{\R^3} g_\m  \p_t \chi_0 \d v - \eps \int_{\R^3} \tfrac{(\p_t + \frac{1}{\eps} v \cdot \nabla_x) \mu_\eps}{2 \mu_\eps} g \chi_0 \d v + \eps \int_{\R^3} S_{4,\m}  \chi_0 \d v\\
      := &- \nabla_x \cdot (u_\eps a_\m ) + \mathfrak{f}_{\m }\,.
    \end{aligned}
  \end{equation}
Recall that $(\rho_\eps, u _\eps , \theta_\eps)(t,x,v) = \big(1 + \eps (\rho_0(t,x,v) + \rho^b_0(t,\bar{x},\tfrac{x_3}{\sqrt{\eps}},v)) , \eps (u_0 + u^b_0(t,\bar{x},\tfrac{x_3}{\sqrt{\eps}},v)), 1 + \eps (\theta_0(t,x,v) + \theta^b_0(t,\bar{x},\tfrac{x_3}{\sqrt{\eps}},v))\big)$. As a consequence, $\nabla_x \mu_\eps = O(\sqrt{\eps})$ and $\p_t \mu_\eps = O(\eps) $. Hence,
\begin{equation}
  \|\mathfrak{f}_\m \|_2 \leq C(\sqrt{\eps} \| g_\m \|_2 + \eps\|S_{4,\m} \|_2)\,.
\end{equation}

For the equation of $b_\m $, we multiply \eqref{remainder_eqn_alpha} by $\chi_j \ (j=1,2,3)$ and integrate over $v \in \R^3$, 
\begin{equation}
 \eps \tfrac{\d}{\d t} \int_{\R^3} g_\m  \chi_j \d v - \eps \int_{\R^3} g_\m  \p_t \chi_j \d v + \int_{\R^3} v \cdot \nabla_x g_\m  \chi_j \d v + \eps \int_{\R^3} \tfrac{(\p_t + \frac{1}{\eps} v \cdot \nabla_x) \mu_\eps}{2 \mu_\eps} g_\m \chi_j \d v = \eps \int_{\R^3} S_{4,\m}  \chi_j \d v \,.
\end{equation}
Observe that $ \chi_j = \sqrt{\tfrac{\mu_\eps}{\rho_\eps}} v_{\eps,j}$ and $\mu_\eps(v) = \tfrac{\rho_\eps}{\theta_\eps^{\frac{3}{2}}} \mu(v_\eps)$. Then, the third term above can be calculated as
\begin{align*}
   \int_{\R^3} v \cdot \nabla_x g_\m  \chi_j \d v =& \nabla_x \cdot \int_{\R^3} v  g_\m  \chi_j \d v - \int_{\R^3} v \cdot  \nabla_x \chi_j g_\m   \d v \\
   =& \sum_{i=1}^3 \p_i \int_{\R^3}  \sqrt{\tfrac{ \theta_\eps \mu_\eps}{\rho_\eps }} v_{\eps,i} v_{\eps,j} g_\m  \d v + \nabla_x \cdot \int_{\R^3} u_\eps \chi_j g_\m  \d v - \int_{\R^3} v \cdot  \nabla_x \chi_j g_\m   \d v \\
   =& \sum_{i=1}^3 \p_i \int_{\R^3}  \theta_\eps^{-\frac{1}{4}} \A_{ij}(v_\eps) \P_\eps^\perp g_\m  \d v +  \p_j (\sqrt{\theta_\eps}(a_\m  + c_\m )) \\
   &+ \nabla_x \cdot ( u_\eps b_{\m ,j} ) - \int_{\R^3} v \cdot  \nabla_x \chi_j g_\m   \d v \,, 
\end{align*}
where we used the fact that $\A_{ij}(v_\eps) \in \N^\perp_\eps$. Consequently,
  \begin{equation}\label{def_f_2alpha}
    \begin{aligned}
      \eps \p_t b_\m  + \sqrt{\theta_\eps} \nabla_x (a_\m  + c_\m )=& - \nabla_x \cdot (u_\eps \otimes b_\m ) -\nabla_x \cdot \int_{\R^3}  \theta_\eps^{-\frac{1}{4}} \A(v_\eps) \P_\eps^\perp g_\m  \d v - (a_\m  + c_\m ) \nabla_x \sqrt{\theta_\eps} \\
      &+ \int_{\R^3} v \cdot  \nabla_x \chi g_\m   \d v + \eps \int_{\R^3} g_\m  \p_t \chi \d v - \eps \int_{\R^3} \tfrac{(\p_t + \frac{1}{\eps} v \cdot \nabla_x) \mu_\eps}{2 \mu_\eps} g_\m \chi \d v\\
      & + \eps \int_{\R^3} S_{4,\m}  \chi \d v\\
      := &- \nabla_x \cdot (u_\eps \otimes b_\m )  -\nabla_x \cdot \int_{\R^3}  \theta_\eps^{-\frac{1}{4}} \A(v_\eps) \P_\eps^\perp g_\m  \d v + \mathfrak{g}_{\m }\,.
    \end{aligned}
  \end{equation}

As for $c_\m $, we multiply \eqref{remainder_eqn_alpha} by $\tfrac{2}{3}\eps \chi_4$ and integrate over $v \in \R^3$, 
\begin{equation*}
\begin{aligned}
\tfrac{2}{3} \eps \tfrac{\d}{\d t} \int_{\R^3}  g_\m  \chi_4 \d v -\tfrac{2}{3} \eps \int_{\R^3} g_\m  \p_t \chi_4 \d v &+ \tfrac{2}{3}\int_{\R^3} v \cdot \nabla_x g_\m  \chi_4 \d v \\
&+ \eps \int_{\R^3} \tfrac{(\p_t + \frac{1}{\eps} v \cdot \nabla_x) \mu_\eps}{3 \mu_\eps} g \chi_4 \d v = \tfrac{2}{3} \eps \int_{\R^3} S_{4,\m}  \chi_4 \d v \,. 
\end{aligned}
\end{equation*}
Using the fact that $\chi_4 = \sqrt{\tfrac{\mu_\eps}{\rho_\eps}} (\tfrac{|v_\eps|^2 -3}{2})$ and $\B(v_\eps ) \in \N^\perp_\eps$, it holds that 
 \begin{align*}
   \tfrac{2}{3}\int_{\R^3} v \cdot \nabla_x g_\m  \chi_4 \d v = & \tfrac{2}{3} \nabla_x \cdot \int_{\R^3} v  g_\m  \chi_4 \d v - \tfrac{2}{3} \int_{\R^3} v \cdot  \nabla_x \chi_4 g_\m   \d v \\
   = & \nabla_x \cdot \int_{\R^3}  \sqrt{\tfrac{ \theta_\eps \mu_\eps}{\rho_\eps}} (\tfrac{|v_\eps|^2 -3}{3})v_\eps g_\m  \d v + \tfrac{2}{3} \nabla_x \cdot \int_{\R^3} u_\eps \chi_4 g_\m  \d v - \tfrac{2}{3} \int_{\R^3} v \cdot  \nabla_x \chi_4 g_\m   \d v \\
   = & \tfrac{2}{3}\nabla_x \cdot \int_{\R^3}  \theta_\eps^{-\frac{1}{4}} \B(v_\eps) \P_\eps^\perp g_\m  \d v + \tfrac{2}{3} \nabla_x \cdot  (\sqrt{\theta_\eps}b_\m ) \\
   &+ \nabla_x \cdot ( u_\eps c_{\m } ) - \tfrac{2}{3} \int_{\R^3} v \cdot  \nabla_x \chi_4 g_\m   \d v \,. \\ 
 \end{align*} 
As a result, the equation for $c_\m$ is derived
  \begin{equation}\label{def_f_3alpha}
    \begin{aligned}
      \eps \p_t c_\m  + \tfrac{2}{3}\sqrt{\theta_\eps} \nabla_x \cdot b_\m  =& - \nabla_x \cdot (u_\eps c_\m ) - \tfrac{2}{3} \nabla_x \cdot \int_{\R^3} \theta_\eps^{-\frac{1}{4}} \B(v_\eps) \P_\eps^\perp g_\m  \d v  - \tfrac{2}{3}b_\m \cdot \nabla_x \sqrt{\theta_\eps} \\
      & + \tfrac{2}{3}\int_{\R^3} v \cdot  \nabla_x \chi_4 g_\m   \d v  + \tfrac{2}{3} \eps \int_{\R^3} g_\m  \p_t \chi_4 \d v - \eps \int_{\R^3} \tfrac{(\p_t + \frac{1}{\eps} v \cdot \nabla_x) \mu_\eps}{3 \mu_\eps} g_\m \chi_4 \d v \\
      &+ \tfrac{2}{3} \eps \int_{\R^3} S_{4,\m}  \chi_4 \d v\\
      := &- \nabla_x \cdot (u_\eps c_\m ) - \tfrac{2}{3} \nabla_x \cdot \int_{\R^3} \theta_\eps^{-\frac{1}{4}} \B(v_\eps) \P_\eps^\perp g_\m  \d v  + \mathfrak{h}_{\m } \,.
    \end{aligned}
  \end{equation}

  Finally, observe that $ \p_t (\rho_\eps,u_\eps,\theta_\eps) = O(\eps)$ and $ \p_3 (\rho_\eps,u_\eps,\theta_\eps) = O(\sqrt{\eps})$, the relation \eqref{fm_gm_hm_L2} holds directly by the H\"older inequality.
\end{proof}

In the next lemma, we employ the conservation law \eqref{eqn_am_bm_cm} to derive the estimates for the singular term  $$\int_{\Omega \times \R^3} \tfrac{(\p_t + \frac{1}{\eps} v \cdot \nabla_x ) {\mu_\eps}}{2 \mu_\eps} g_\m ^2 \d x \d v \,, $$
which is crucial for closing our energy estimates. Notice that $(\p_t + \frac{1}{\eps} v \cdot \nabla_x ) {\mu_\eps} = O(\tfrac{1}{\sqrt{\eps}})$.
\begin{lemma}\label{Lmm_mu_eps}Under the same assumptions as in Proposition \ref{Prop_g_k_g_b_k}, there exists a sufficiently small constant $\eps_0$ such that for all $0<\eps<\eps_0$,
  \begin{equation}
    \begin{aligned}
      \int_{\Omega \times \R^3} \tfrac{(\p_t + \frac{1}{\eps} v \cdot \nabla_x ) {\mu_\eps}}{2 \mu_\eps} g_\m ^2 \d x \d v  \leq & \eps \tfrac{\d}{\d t} \mathbb{G}_\m (t) + C_1 \| g_\m \|_2(\sum_{i=1}^{2}\|\p_i g_{\m+1} \|_2)  \\
      &+ C\Big(  \|g_\m \|_2^2 + \eps^{-\frac{5}{2}} \|\P^\perp_\eps g_\m \|_\nu^2 + \eps^7 \|h_\m \|^2_\infty + \|S_{4,\m}\|_2^2 \Big) \,,
    \end{aligned}
  \end{equation}
  for some constants $C,C_1 >0$, and a function $\mathbb{G}_\m$ satisfying $|\mathbb{G}_\m(t)| \leq C \|g_\m(t) \|_2^2\,.$
\end{lemma}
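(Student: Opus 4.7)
The strategy is to localize the singular part of $(\p_t+\tfrac{1}{\eps}v\cdot\nabla_x)\mu_\eps/(2\mu_\eps)$, exploit the vanishing of $b_{\m,3}$ at the wall together with the conservation laws \eqref{eqn_am_bm_cm} to convert the $O(\eps^{-1/2})$ normal-derivative singularity into a one-derivative loss in the tangential direction only, and then to absorb the residual microscopic pieces via the coercivity \eqref{def_tilde_c_0} and the weighted $L^\infty$ bound built into $h_\m$.

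First, since $\p_t(\rho_\eps,u_\eps,\theta_\eps)=O(\eps)$ while the Prandtl profiles force $\p_3(\rho^b_0,u^b_0,\theta^b_0)=O(\eps^{-1/2})$ in the scaled variable, the pointwise expansion of $\tfrac{1}{2\mu_\eps}(\p_t+\tfrac{1}{\eps}v\cdot\nabla_x)\mu_\eps$ shows that only the terms carrying $\tfrac{v_3}{\eps}\p_3$ are singular (of order $O(\eps^{-1/2})$); all non-singular pieces integrate against $g_\m^2$ to give contributions of schematic form $\|g_\m\|_2^2+\eps^{-5/2}\|\P_\eps^\perp g_\m\|_\nu^2$ after the decomposition $g_\m=\P_\eps g_\m+\P_\eps^\perp g_\m$ and Cauchy--Schwarz, while the high-velocity tails are controlled by $\mu_\eps\lesssim\mu_M^{\alpha_0}$ and the definition \eqref{def_h} of $h_\m$, producing the $\eps^7\|h_\m\|_\infty^2$ term. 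What remains are the boundary-layer macroscopic integrals
\begin{equation*}
  \tfrac{1}{\eps}\!\int_\Omega\!\p_3 u_{\eps,i}\,b_{\m,3}\,b_{\m,i}\,\d x\ (i=1,2),\qquad
  \tfrac{1}{\eps}\!\int_\Omega\!\p_3(\rho_\eps+\theta_\eps)\,b_{\m,3}(a_\m+c_\m)\,\d x,\qquad
  \tfrac{1}{\eps}\!\int_\Omega\!\p_3\theta_\eps\,b_{\m,3}\,c_\m\,\d x,
\end{equation*}
which I treat identically and describe only the first.

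Next, the Maxwell reflection boundary condition \eqref{g_m_BC} is even in $v_3$, so $b_{\m,3}(t,\bar{x},0)=0$ and hence $b_{\m,3}(t,\bar{x},x_3)=\int_0^{x_3}\p_3 b_{\m,3}(t,\bar{x},x_3')\,\d x_3'$. I substitute $\sqrt{\theta_\eps}\p_3 b_{\m,3}=-\eps\p_t a_\m-\sqrt{\theta_\eps}\sum_{j=1}^{2}\p_j b_{\m,j}-\nabla_x\!\cdot(u_\eps a_\m)+\mathfrak{f}_\m$ from $\eqref{eqn_am_bm_cm}_1$. The tangential piece is handled by the Hardy inequality: since $\tfrac{1}{\eps}x_3\,\p_3 u_{\eps,i}=\tfrac{x_3}{\sqrt{\eps}}\p_\zeta u^b_{0,i}+O(\sqrt{\eps})=O(1)$, one obtains
\begin{equation*}
  \Bigl|\tfrac{1}{\eps}\!\int_\Omega\!\p_3 u_{\eps,i}\!\int_0^{x_3}\!\sum_{j=1}^{2}\p_j b_{\m,j}\,\d x_3'\,b_{\m,i}\,\d x\Bigr|\le C_1\|g_\m\|_2\sum_{j=1}^{2}\|\p_j g_{\m+1}\|_2,
\end{equation*}
yielding exactly the stated loss of one tangential derivative compatible with the analytic norm \eqref{def_X_Y}. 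The contributions of $\nabla_x\!\cdot(u_\eps a_\m)$ and $\mathfrak{f}_\m$ are bounded by $C(\|g_\m\|_2^2+\|S_{4,\m}\|_2^2)$ via \eqref{fm_gm_hm_L2}.

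The main obstacle is the time-derivative piece $-\tfrac{1}{\eps}\!\int_\Omega\!\p_3 u_{\eps,i}\!\int_0^{x_3}\!\eps\p_t a_\m\,\d x_3'\,b_{\m,i}\,\d x$. I integrate by parts in time, writing $\p_t a_\m\cdot b_{\m,i}=\tfrac{\d}{\d t}(a_\m b_{\m,i})-a_\m\p_t b_{\m,i}$; the boundary-in-time piece defines $\eps\tfrac{\d}{\d t}\mathbb{G}_\m(t)$ with $|\mathbb{G}_\m(t)|\le C\|g_\m(t)\|_2^2$. Substituting $\eps\p_t b_{\m,i}$ from $\eqref{eqn_am_bm_cm}_2$, the tangential gradient $\sqrt{\theta_\eps}\p_i(a_\m+c_\m)$ again gives $C_1\|g_\m\|_2\|\p_i g_{\m+1}\|_2$ via Hardy, while the microscopic flux $\p_3\!\int\!\theta_\eps^{-1/4}\A_{i3}(v_\eps)\P_\eps^\perp g_\m\,\d v$ is handled by a further integration by parts in $x_3$: moving $\p_3$ onto the coefficient $\tfrac{1}{\eps}\p_3 u_{\eps,i}\!\int_0^{x_3}\!a_\m\,\d x_3'$ removes the derivative and leaves
\begin{equation*}
  \Bigl|\tfrac{1}{\eps}\!\int_\Omega\!\p_3 u_{\eps,i}\,a_\m\!\int_{\R^3}\!\theta_\eps^{-1/4}\A_{i3}(v_\eps)\P_\eps^\perp g_\m\,\d v\,\d x\Bigr|\le \tfrac{C}{\sqrt{\eps}}\|a_\m\|_2\|\P_\eps^\perp g_\m\|_\nu,
\end{equation*}
which by Young's inequality produces $\|g_\m\|_2^2+\eps^{-5/2}\|\P_\eps^\perp g_\m\|_\nu^2$, well within the $\eps^{-3}$ coercivity budget of \eqref{def_tilde_c_0}. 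The two remaining singular integrals involving $\p_3(\rho_\eps+\theta_\eps)$ and $\p_3\theta_\eps$ are treated in the same way, using $\eqref{eqn_am_bm_cm}_3$ and the Boussinesq-type cancellations $\rho_0+\theta_0=0$, $\rho^b_0+\theta^b_0=0$ that further reduce their apparent size. Collecting the contributions yields the stated estimate; the delicate point is the careful tracking of $\eps$-powers at each substitution to ensure that every singular contribution is either absorbed into $\eps\tfrac{\d}{\d t}\mathbb{G}_\m$, traded for a single tangential-derivative loss, or bounded by a quantity strictly dominated by the coercivity dissipation.
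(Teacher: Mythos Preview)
Your proposal is correct and follows essentially the same approach as the paper: macro--micro decomposition of $g_\m$, high-velocity control through $h_\m$ and \eqref{mu_eps_mu_M}, reduction of the singular part to the three macroscopic integrals involving $\tfrac{1}{\eps}\p_3(\rho_\eps,u_\eps,\theta_\eps)$, and then the key substitution $b_{\m,3}=\int_0^{x_3}\p_3 b_{\m,3}\,\d x_3'$ combined with the conservation laws \eqref{eqn_am_bm_cm}, the Hardy inequality with $\tfrac{1}{\eps}x_3\p_3 u_{\eps,i}=O(1)$, integration by parts in time to define $\mathbb{G}_\m$, and a final integration by parts in $x_3$ to remove the normal derivative on the microscopic flux. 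The paper carries out exactly these steps (its $\J_1$ is your first integral, and $\J_2,\J_3$ are handled analogously), so there is nothing to add.
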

\begin{proof}
We begin by applying the macro-micro decomposition to $g_\m $
  \begin{equation}
   \begin{aligned}
    \int_{\Omega \times \R^3} \tfrac{(\p_t + \frac{1}{\eps} v \cdot \nabla_x ) {\mu_\eps}}{2 \mu_\eps} g_\m ^2 \d x \d v  
    =& \int_{\Omega \times \R^3} \tfrac{ (\p_t + \frac{1}{\eps} v \cdot \nabla_x ) {\mu_\eps}}{2 \mu_\eps} ( (\P_\eps g_\m )^2 + 2 \P_\eps g_\m  \P_\eps^\perp g_\m  +(\P^\perp_\eps g_\m )^2)\d x \d v \\
    :=& \I_{1} + \I_{2} + \I_{3} \,.
   \end{aligned}
  \end{equation}
We first treat the microscopic part $\I_3$. Recall \eqref{def_h} and \eqref{mu_eps_mu_M}. Observing that $\tfrac{(\p_t + \frac{1}{\eps} v \cdot \nabla_x ) {\mu_\eps}}{2 \mu_\eps}$ is a cubic polynomial in $v$, it follows that, for $a = \tfrac{2}{3 - \gamma}$,
  \begin{equation}\label{estimate_I_3}
  \begin{aligned}
    \I_{3} = &\int_{|v| \geq \tfrac{1}{\eps^a}}(\cdots) + \int_{|v| \leq \tfrac{1}{\eps^a}}(\cdots) \\
    \leq & C \|(\p_t + \tfrac{1}{\eps} \nabla_x)(\rho_\eps, u_\eps, \theta_\eps) \|_2 \|\{ 1 + |v|^2\}^{3/2}  \P^\perp_\eps g_\m \mathbbm{1}_{|v| \geq \tfrac{1}{\eps^a}}\|_\infty \|g_\m \|_2\\
    &+ C \|(\p_t + \tfrac{1}{\eps} \nabla_x)(\rho_\eps, u_\eps, \theta_\eps) \|_\infty \|\{ 1 + |v|^2\}^{3/4} \P^\perp_\eps g_\m \mathbbm{1}_{|v| \leq \tfrac{1}{\eps^a}}\|^2_2   \\
    \leq& \eps^{\frac{7}{2}}C \|h_\m \|_\infty \|g_\m \|_2 +  \eps^{-\frac{5}{2}} C\|\P^\perp_\eps g_\m \|_\nu^2 \\
    \leq & C (\eps^{7} \|h_\m \|^2_\infty  + \|g_\m \|^2_2 +  \eps^{-\frac{5}{2}} \|\P^\perp_\eps g_\m \|_\nu^2) \,,
  \end{aligned}
\end{equation}
where we used the fact that
\begin{equation*}
    \|\{ 1 + |v|^2\}^{3/2}  \P^\perp g_\m \mathbbm{1}_{|v| \geq \tfrac{1}{\eps^a}}\|_\infty  \leq C \| \{ 1 + |v|^2\}^{\frac{ 2 \gamma - 6}{2}} h_\m\mathbbm{1}_{|v| \geq \tfrac{1}{\eps^a}}\|_\infty \leq \eps^4 C \| h_\m\|_\infty \,,
\end{equation*}
and
\begin{equation*}
    \|\{ 1 + |v|^2\}^{3/4} \P^\perp g_\m \mathbbm{1}_{|v| \leq \tfrac{1}{\eps^a}}\|^2_2 = \|\{ 1 + |v|^2\}^{\frac{3 - \gamma}{4}} \nu^{\frac{1}{2}} \P^\perp g_\m \mathbbm{1}_{|v| \leq \tfrac{1}{\eps^a}}\|^2_2 \leq  \eps^{-2} C\| \P^\perp_\eps g_\m \|_\nu^2 \,.
\end{equation*}
The H\"older inequality implies
\begin{equation}\label{estimate_I_2}
    \I_{2} \leq \|\nu^{-\frac{1}{2}}\tfrac{(\p_t + \frac{1}{\eps} v \cdot \nabla_x ) {\mu_\eps}}{ \mu_\eps} \P_\eps g_\m  \|_2 \|\P_\eps^\perp g_\m  \|_\nu\\
    \leq C (\| g_\m \|_2^2 + \tfrac{1}{\eps}\| \P^\perp g_\m \|^2_\nu)  \,.
\end{equation}

It remains to estimate the term $\I_{1}$. Note that $v = \sqrt{\theta_\eps} v_\eps + u_\eps$,
  \begin{equation}
    \begin{aligned}
      \I_{1} =&  \int_{\Omega \times \R^3} \tfrac{ (\p_t + \frac{1}{\eps} v_3 \p_3) \mu_\eps  }{2 \mu_\eps}  (\P_\eps g_\m )^2\d x \d v \\
       =& \int_{\Omega \times \R^3} \tfrac{  (\p_t +  \frac{1}{\eps} u_{\eps,3}  \p_3)  \mu_\eps  }{2 \mu_\eps}  (\P_\eps g_\m )^2\d x \d v + \int_{\Omega \times \R^3} \tfrac{   \sqrt{\theta_\eps}  v_{\eps,3} \p_3  \mu_\eps  }{2 \eps \mu_\eps}  (\P_\eps g_\m )^2\d x \d v  
      \,.
    \end{aligned}
  \end{equation}
The first term above is regular and does not exhibit singularity,
\begin{equation}\label{estimate_I_1_1}
    \int_{\Omega \times \R^3} \tfrac{  (\p_t +  \frac{1}{\eps} u_{\eps,3}  \p_3)  \mu_\eps  }{2 \mu_\eps}  (\P_\eps g_\m )^2\d x \d v  \leq C\sqrt
{\eps}  \|\P_\eps g_\m \|_2^2 \,. 
\end{equation}
Observe that 
\begin{equation*}
  \d \mu_\eps = \mu_\eps \big(\tfrac{\d \rho_\eps}{\rho_\eps}  + \tfrac{(v- u_\eps)\cdot \d u_\eps}{\theta_\eps}  + (\tfrac{|v - u_\eps|^2}{2 \theta_\eps} - \tfrac{3}{2}) \tfrac{\d \theta_\eps}{\theta_\eps}\big)\,.
\end{equation*}
The second term on the right-hand side of $\I_1$ can be further calculated as
\begin{equation}
  \begin{aligned}
  &\int_{\Omega \times \R^3} \tfrac{   \sqrt{\theta_\eps}  v_{\eps,3} \p_3  \mu_\eps  }{2 \eps \mu_\eps}  (\P_\eps g_\m )^2\d x \d v  \\
      = & \tfrac{1}{\eps}  \sum_{i=1}^2 \int_{\Omega} \p_3 u_{\eps,i} b_{\m ,3} b_{\m ,i} \d x + \tfrac{1}{\eps} \int_{\Omega} (\tfrac{\p_3 \rho_\eps}{\rho_\eps} + \tfrac{\p_3 \theta_\eps}{\theta_\eps}) \sqrt{\theta_\eps} b_{\m ,3}(a_\m  + c_\m ) \d x \\
    &+ \tfrac{5}{2 \eps} \int_{\Omega} \tfrac{\p_3 \theta_\eps}{\sqrt{\theta_\eps}} b_{\m ,3} c_\m  \d x + \int_{\Omega \times \R^3} \tfrac{v_{\eps,3}^2 \p_3 u_{\eps,3}}{2 \eps } (\P_\eps g_\m)^2 \d x \d v 
    := \sum_{k=1}^4 \J_k \,.
  \end{aligned}
\end{equation}
These terms are singular because $\tfrac{1}{\eps} \p_3 (\rho_\eps, u_\eps, \theta_\eps) = O(\tfrac{1}{\sqrt{\eps}})$. To handle this singularity, we will employ the conservation law to convert it into a loss of tangential spatial derivatives, which can be controlled and absorbed within analytic energy estimates.

Now we turn to the control of the term $\J_1$. From the Maxwell reflection boundary condition \eqref{g_m_BC} and the fact that $\chi_3 (t,\bar{x}, 0 , v)= v_3 \sqrt{\mu} $, it follows that
$$b_{\m,3}(t, \bar{x}, 0) = \int_{\R^3} v_3 \sqrt{\mu} g_\m \d v  = 0\,.$$
Hence, $b_{\m ,3}(t, x) = \int_{0}^{x_3}\p_3 b_{\m ,3}(t, \bar{x}, x_3^\prime ) \d x_3^\prime $. We then apply the mass conservation law \eqref{eqn_am_bm_cm}$_1$ to obtain
\begin{align*}
  \J_1 = & \tfrac{1}{\eps}  \sum_{i=1}^2 \int_{\Omega} \p_3 u_{\eps,i}  \int_{0}^{x_3}\p_3 b_{\m ,3} \d x_3^\prime  b_{\m ,i} \d x \\
  = & \tfrac{1}{\eps}  \sum_{i=1}^2 \int_{\Omega} \p_3 u_{\eps,i}  \int_{0}^{x_3}(- \eps \tfrac{\p_t a_\m }{\sqrt{\theta_\eps}} - \sum_{j=1}^{2} \p_j b_{\m ,j} - \tfrac{\nabla_x \cdot (u_\eps a_\m )}{\sqrt{\theta_\eps}} + \tfrac{\mathfrak{f_\m}}{\sqrt{\theta_\eps}})\d x_3^\prime  b_{\m ,i} \d x \\
  = & -   \sum_{i=1}^2 \int_{\Omega} \p_3 u_{\eps,i}  \int_{0}^{x_3} \tfrac{\p_t a_\m }{\sqrt{\theta_\eps}} \d x_3^\prime  b_{\m ,i} \d x 
  - \tfrac{1}{\eps}  \sum_{i=1}^2 \int_{\Omega} \p_3 u_{\eps,i}  \int_{0}^{x_3}  \sum_{j=1}^{2} \p_j (b_{\m ,j} ) \d x_3^\prime  b_{\m ,i} \d x \\
  & - \tfrac{1}{\eps}  \sum_{i=1}^2 \int_{\Omega} \p_3 u_{\eps,i}  \int_{0}^{x_3}  \sum_{j=1}^{2} \tfrac{ \p_j  (u_{\eps,j} a_\m )}{\sqrt{\theta_\eps}} \d x_3^\prime  b_{\m ,i} \d x \\
 & -\tfrac{1}{\eps}  \sum_{i=1}^2 \int_{\Omega} \p_3 u_{\eps,i}  \int_{0}^{x_3} \tfrac{\p_3 (u_{\eps,3} a_\m )}{\sqrt{\theta_\eps}} \d x_3^\prime  b_{\m ,i} \d x 
  +\tfrac{1}{\eps}  \sum_{i=1}^2 \int_{\Omega} \p_3 u_{\eps,i}  \int_{0}^{x_3} \tfrac{\mathfrak{f_\m}}{\sqrt{\theta_\eps}} \d x_3^\prime  b_{\m ,i} \d x \\
  = &\sum_{k=1}^{5}\J_{1,k} \,.
\end{align*}
To remove the loss of time regularity in $\J_{1,1}$, we make use of the chain rule
\begin{equation*}
  \begin{aligned}
    \J_{1,1} = &   \sum_{i=1}^2 \int_{\Omega} \p_3 u_{\eps,i}  \int_{0}^{x_3} \tfrac{  a_\m }{\sqrt{\theta_\eps}} \d x_3^\prime  \p_t b_{\m ,i} \d x +    \sum_{i=1}^2 \int_{\Omega} \p_3 u_{\eps,i}  \int_{0}^{x_3} \tfrac{ a_\m }{\p_t \sqrt{\theta_\eps}} \d x_3^\prime  b_{\m ,i} \d x\\
    & + \sum_{i=1}^2 \int_{\Omega} \p_t \p_3 u_{\eps,i}  \int_{0}^{x_3} \tfrac{  a_\m }{\sqrt{\theta_\eps}} \d x_3^\prime  b_{\m ,i} \d x- \tfrac{\d}{\d t}   \sum_{i=1}^2 \int_{\Omega} \p_3 u_{\eps,i}  \int_{0}^{x_3} \tfrac{  a_\m }{\sqrt{\theta_\eps}} \d x_3^\prime  b_{\m ,i} \d x \,.
  \end{aligned}
\end{equation*}
For the first term on the right-hand side above, we apply the momentum conservation law $\eqref{eqn_am_bm_cm}_2$, to obtain 
  \begin{align}\label{J_1_k_1}
    &\nonumber  \sum_{i=1}^2 \int_{\Omega} \p_3 u_{\eps,i}  \int_{0}^{x_3} \tfrac{  a_\m }{\sqrt{\theta_\eps}} \d x_3^\prime  \p_t b_{\m ,i} \d x \nonumber\\
    = &\nonumber\tfrac{1}{\eps}  \sum_{i=1}^2 \int_{\Omega} \p_3 u_{\eps,i}  \int_{0}^{x_3} \tfrac{  a_\m }{\sqrt{\theta_\eps}} \d x_3^\prime  (- \sqrt{\theta_\eps} \p_i(a_\m  + c_\m ) - \nabla_x \cdot (u_\eps b_{\m,i}) \\
    &\nonumber\qquad\qquad\qquad- \nabla_x \cdot \int_{\R^3} \theta_\eps^{-\frac{1}{4}} \A(v_\eps) \P^\perp_\eps g_\m  \d v + \mathfrak{g}_{\m ,i})\d x \\
    = & \tfrac{1}{\eps}  \sum_{i=1}^2 \int_{\Omega} \p_3 u_{\eps,i}  \int_{0}^{x_3} \tfrac{  a_\m }{\sqrt{\theta_\eps}} \d x_3^\prime  (- \sqrt{\theta_\eps} \p_i(a_\m  + c_\m ) - \sum_{j=1}^{2}\p_j (u_{\eps,j} b_{\m,i}) \\
    &\nonumber\qquad\qquad\qquad- \sum_{j=1}^{2}\p_j \int_{\R^3} \theta^{-\frac{1}{4}}_\eps \A(v_\eps) \P^\perp_\eps g_\m  \d v + \mathfrak{g}_{\m ,i})\d x \\
    &\nonumber- \tfrac{1}{\eps}  \sum_{i=1}^2 \int_{\Omega} \p_3 u_{\eps,i}  \int_{0}^{x_3} \tfrac{  a_\m }{\sqrt{\theta_\eps}} \d x_3^\prime  (  \p_3 (u_{\eps,j} b_{\m,i}) + \p_3\int_{\R^3} \theta^{-\frac{1}{4}}_\eps \A(v_\eps) \P^\perp_\eps g_\m  \d v  )\d x \,.
  \end{align} 
Note that $x_3 \p_3 u_{\eps, i} = O(\eps)$. The first term on the right-hand side of \eqref{J_1_k_1} loss only tangtial derivatives, and can be estimated as
\begin{equation}
  \tfrac{1}{\eps}  \sum_{i=1}^2 \int_{\Omega} x_3 \p_3 u_{\eps,i}  \tfrac{\int_{0}^{x_3} \tfrac{  a_\m }{\sqrt{\theta_\eps}} \d x_3^\prime }{x_3} (\cdots)\d x \\
  \leq C \|g_\m  \|_2 ( \sum_{i=1}^{2}\|\p_i g_{\m} \|_2 + \sqrt{\eps} \|g_\m \|_2 + \eps \|S_{4,\m} \|_2) \,,
\end{equation}
where we used the Hardy inequality and \eqref{fm_gm_hm_L2}. The second term on the right-hand side of \eqref{J_1_k_1} loss the normal derivatives.  Noticing that \(\int_{0}^{x_3} \tfrac{a_\m}{\sqrt{\theta_\eps}} \d x_3^\prime\) vanishes on the boundary, we integrate by parts to obtain, 
  \begin{align*}
  &- \tfrac{1}{\eps}  \sum_{i=1}^2 \int_{\Omega} \p_3 u_{\eps,i}  \int_{0}^{x_3} \tfrac{  a_\m }{\sqrt{\theta_\eps}} \d x_3^\prime \p_3\big(u_{\eps,j} b_{\m,i} +  \int_{\R^3} \theta^{-\frac{1}{4}}_\eps \A(v_\eps) \P^\perp_\eps g_\m  \d v\big)\d x \\
    =& \tfrac{1}{\eps}  \sum_{i=1}^2 \int_{\Omega} \p^2_3 u_{\eps,i}  \int_{0}^{x_3} \tfrac{  a_\m }{\sqrt{\theta_\eps}} \d x_3^\prime  \big(   u_{\eps,j} b_{\m,i} +  \int_{\R^3} \theta^{-\frac{1}{4}}_\eps \A(v_\eps) \P^\perp_\eps g_\m  \d v  \big)\d x \\
    &+ \tfrac{1}{\eps}  \sum_{i=1}^2 \int_{\Omega} \p_3 u_{\eps,i}    \tfrac{  a_\m }{\sqrt{\theta_\eps}}   \big(   (u_{\eps,j} b_{\m,i}) +  \int_{\R^3} \theta^{-\frac{1}{4}}_\eps \A(v_\eps) \P^\perp_\eps g_\m  \d v   \big)\d x \\
    \leq & \tfrac{C}{\eps}(\|x_3 \p_3^2 u_{\eps,i} \|_\infty + \| \p_3 u_{\eps,i} \|_\infty) \|a_\m  \|_2 (\eps \| b_\m  \|_2 +  \| \P^\perp_\eps g_\m  \|_2)\\
    \leq &  C \sqrt{\eps}\|g_\m \|^2_2 + \tfrac{C}{\eps^{\frac{3}{2}}} \|\P^\perp_\eps g_\m\|_\nu^2  \,,
  \end{align*} 
where the fact that $\|x_3 \p_3^2 u_{\eps,i} \|_\infty + \| \p_3 u_{\eps,i} \|_\infty = O(\sqrt{\eps})$ has been used. As a result, 
  \begin{align*}
    \J_{1,1} \leq & C \|g_\m  \|_2 ( \sum_{i=1}^{2}\|\p_i g_{\m} \|_2)+ C (\eps \|S_{4,\m} \|^2_2 + \sqrt{\eps} \|g_\m \|^2_2 + \tfrac{1}{\eps^{\frac{3}{2}}} \|\P^\perp_\eps g_\m\|_\nu^2 )\\
    & - \tfrac{\d}{\d t}   \sum_{i=1}^2 \int_{\Omega} \p_3 u_{\eps,i}  \int_{0}^{x_3} \tfrac{  a_\m }{\sqrt{\theta_\eps}} \d x_3^\prime   b_{\m ,i} \d x \,.
  \end{align*}
For $\J_{1,2}, \J_{1,3}$ and $\J_{1,5}$, we use the Hardy inequality to derive
\begin{equation*}
  \begin{aligned}
    \J_{1,2} +\J_{1,3} + \J_{1,5} = & - \tfrac{1}{\eps}  \sum_{i=1}^2 \int_{\Omega} x_3 \p_3 u_{\eps,i} \tfrac{\int_{0}^{x_3} (\cdots) \d x_3^\prime }{x_3} b_{\m ,i} \d x \\
    \leq & C \| g_\m  \|_2 \|( \sum_{i=1}^{2}\|\p_i g_{\m} \|_2 +  \sqrt{\eps}\| g_\m \|_2  + \eps \| S_{4,\m}\|_2)\,.
  \end{aligned}
\end{equation*}
For $\J_{1,4}$, we note that $u_{\eps,3} (t,\bar{x},0) =0$, and hence it holds that
\begin{equation*}
  \begin{aligned}
    \J_{1,4} = & -\tfrac{1}{\eps}  \sum_{i=1}^2 \int_{\Omega} \p_3 u_{\eps,i}  \int_{0}^{x_3} \p_3(\tfrac{  u_{\eps,3} a_\m }{\sqrt{\theta_\eps}}) \d x_3^\prime  b_{\m ,i} \d x + \tfrac{1}{\eps}  \sum_{i=1}^2 \int_{\Omega} \p_3 u_{\eps,i}  \int_{0}^{x_3} \tfrac{ u_{\eps,3} a_\m  }{\p_3 \sqrt{\theta_\eps}} \d x_3^\prime  b_{\m ,i} \d x \\
    = & -\tfrac{1}{\eps}  \sum_{i=1}^2 \int_{\Omega} \p_3 u_{\eps,i}   (\tfrac{  u_{\eps,3} a_\m }{\sqrt{\theta_\eps}})   b_{\m ,i} \d x + \tfrac{1}{\eps}  \sum_{i=1}^2 \int_{\Omega} x_3 \p_3 u_{\eps,i} \tfrac{\int_{0}^{x_3} \frac{ u_{\eps,3} a_\m  }{\p_3 \sqrt{\theta_\eps}} \d x_3^\prime}{x_3}  b_{\m ,i} \d x \\
    \leq & C \sqrt{\eps} \|g_\m \|_2^2  \,.
  \end{aligned}
\end{equation*}
Consequently, we obtain the estimate for $\J_1$
\begin{equation}\label{estimate_J_1}
 \begin{aligned}
   \J_1 \leq& C \|g_\m  \|_2 ( \sum_{i=1}^{2}\|\p_i g_{\m} \|_2)+ C (\eps \|S_{4,\m} \|^2_2 + \sqrt{\eps} \|g_\m \|^2_2 + \tfrac{1}{\eps^\frac{3}{2}} \|\P^\perp_\eps g_\m\|_\nu^2 )\\
     & - \tfrac{\d}{\d t}   \sum_{i=1}^2 \int_{\Omega} \p_3 u_{\eps,i}  \int_{0}^{x_3} \tfrac{  a_\m }{\sqrt{\theta_\eps}} \d x_3^\prime   b_{\m ,i} \d x \,.
 \end{aligned}
\end{equation}

The terms $\J_j \ (j=2,3,4)$ can be handled in a similar manner, by employing the conservation law \eqref{eqn_am_bm_cm} and the Hardy inequality. Since the computations are analogous yet tedious, we omit the details for brevity.

Collecting \eqref{estimate_I_3}-\eqref{estimate_I_1_1}, and \eqref{estimate_J_1}, we conclude the proof of the lemma.
\end{proof}

Now we are already to derive the $L^2$ estimates for $g$ in analytic spaces. In the sequel, we take
 $$\sigma(t) = 2 -\lambda t$$
 for some $\lambda >0 $ determined later. We always assume $\sigma(t) \in [1,2]$.
\begin{lemma}\label{Lmm_L_2}  Under the same assumptions as in Proposition \ref{Prop_g_k_g_b_k}, there exists a sufficiently small constant $\eps_0$ such that for all $0<\eps<\eps_0$,
  \begin{equation}
  \begin{aligned}
    & \tfrac{1}{2}\tfrac{\d}{\d t} \|g \|_X^2 + \lambda \| g\|^2_{X^\frac{1}{2}}+ \tfrac{(2 \alpha - \alpha^2)}{2 \eps} \sum_{\m \in \mathbb{N}_0^2} \tfrac{\sigma(t)^{2 |\m|}}{(\m !)^2}\int_{\Sigma_+}(v\cdot n)\left|(I-P_\gamma)g_\m \right|^2 \d \bar{x}\d v  + \tfrac{c}{\eps^3} \|\nu^{\frac{1}{2}} \P^\perp_\eps g \|_{X}^2 \\
     \leq & \eps \tfrac{\d}{\d t} \sum_{\m \in \mathbb{N}_0^2} \tfrac{\sigma(t)^{2 |\m|}}{(\m !)^2}  \mathbb{G}_\m(t)+  C\eps^7 \| h\|_Y^2 \| g\|_X^2 + C \| g \|_{X}^2 + C + (C_1 + 2 C \eps \lambda ) \| g \|_{X^\frac{1}{2}}^2 \,.
  \end{aligned}
  \end{equation}
  for some constant $c>0$ and function $\mathbb{G}_\m \leq C \| g_\m \|_2^2$, where $P_\gamma g$ is defined in \eqref{def_P_gamma}.
\end{lemma}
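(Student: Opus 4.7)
My plan is to perform the standard $L^2$ energy estimate on equation \eqref{remainder_eqn_alpha} for each multi-index $\m$, and then lift it to the analytic norm $X$ by weighted summation. Take the $L^2_{x,v}$ inner product of \eqref{remainder_eqn_alpha} with $g_\m$ to produce
\begin{equation*}
\tfrac{1}{2}\tfrac{\d}{\d t}\|g_\m\|_2^2 + \tfrac{1}{2\eps}\int_\Sigma (v\cdot n)|g_\m|^2\d\bar{x}\d v + \tfrac{1}{\eps^3}\langle \L_\eps g_\m, g_\m\rangle + \int_{\Omega\times\R^3}\tfrac{(\p_t + \frac{1}{\eps}v\cdot\nabla_x)\mu_\eps}{2\mu_\eps} g_\m^2 \d x\d v = \langle S_\m, g_\m\rangle.
\end{equation*}
For the boundary term, I would use the Maxwell reflection \eqref{g_m_BC}: decomposing the $\Sigma$ integral into $\Sigma_\pm$ contributions, inserting the reflection identity on $\Sigma_-$, and exploiting the orthogonality of $P_\gamma g_\m$ and $(I-P_\gamma)g_\m$ on $\Sigma_+$ together with $(1-\alpha)^2 = 1-\alpha(2-\alpha)$, produces exactly $\tfrac{(2\alpha-\alpha^2)}{2\eps}\int_{\Sigma_+}(v\cdot n)|(I-P_\gamma)g_\m|^2$. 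The coercivity \eqref{def_tilde_c_0} supplies $\tfrac{\tilde{c}_0}{\eps^3}\|\nu^{1/2}\P_\eps^\perp g_\m\|_2^2$, and Lemma \ref{Lmm_mu_eps} handles the singular $\mu_\eps$-drift term, yielding the total-derivative piece $\eps\tfrac{\d}{\d t}\mathbb{G}_\m$ plus $C_1\|g_\m\|_2\sum_i\|\p_i g_\m\|_2$ and s.o.t.

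Next I would estimate the four pieces of $S_\m$. Because $\Gamma_\eps(\cdot,\cdot)\in \N_\eps^\perp$, for $S_{1,\m}$ one pairs $\langle \Gamma_\eps(g,g),g\rangle = \langle \Gamma_\eps(g,g),\P_\eps^\perp g\rangle$ and bounds the bilinear form by $\|h_\m\|_\infty \|g_\m\|_\nu$ after the weight shift \eqref{def_h}; Young's inequality $\eps^2 ab\leq \eps^7 a^2/2 + b^2/(2\eps^3)$ then yields $C\eps^7\|h_\m\|_\infty^2\|g_\m\|_2^2$ plus a microscopic piece absorbed by coercivity. For $S_{2,\m}$ and $S_{3,\m}$ (linear in $g$ with analytic profiles controlled by Proposition \ref{Prop_g_k_g_b_k}), the most singular power is $\eps^{-3/2}$, and the elementary Young estimate $\eps^{-1}ab\leq \tfrac{\eps a^2}{2}+\tfrac{b^2}{2\eps^3}$ together with the $\P_\eps^\perp$-projection trick gives $C\|g_\m\|_2^2 + \delta\eps^{-3}\|\P_\eps^\perp g_\m\|_\nu^2$. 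The inhomogeneity $S_{4,\m}$ is bounded in analytic norm by the constant $C$ coming from \eqref{R_analytic}.

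Finally, multiply by $\sigma(t)^{2|\m|}/(\m!)^2$ and sum over $\m\in\mathbb{N}_0^2$. The identity
\begin{equation*}
\sum_\m \tfrac{\sigma^{2|\m|}}{(\m!)^2}\tfrac{\d}{\d t}\|g_\m\|_2^2 = \tfrac{\d}{\d t}\|g\|_X^2 + 2\lambda\|g\|_{X^{1/2}}^2
\end{equation*}
delivers both the time derivative of $\|g\|_X^2$ and the dissipation $\lambda\|g\|_{X^{1/2}}^2$ on the left. The analogous derivative identity applied to $\sum \sigma^{2|\m|}/(\m!)^2\cdot\eps\mathbb{G}_\m$ produces the stated $\eps\tfrac{\d}{\d t}\mathbb{G}(t)$ term modulo $2\eps\lambda\sum |\m|\sigma^{2|\m|-1}/(\m!)^2|\mathbb{G}_\m|\leq 2C\eps\lambda\|g\|_{X^{1/2}}^2$, accounting for the $2C\eps\lambda$ coefficient on the right. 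For the tangential-derivative loss $C_1\|g_\m\|_2\sum_i\|\p_i g_\m\|_2 = C_1\|g_\m\|_2\sum_i\|g_{\m+e_i}\|_2$, apply Cauchy--Schwarz with the weight shift $\m\to\m+e_i$ and the combinatorial identity $((\m+e_i)!)^2=(\m_i+1)^2(\m!)^2$ to obtain precisely the bound $C_1\|g\|_{X^{1/2}}^2$.

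The \emph{main obstacle} is the last step: verifying that the tangential loss, after weighted summation, lands exactly in $X^{1/2}$ rather than a stronger norm. This is the whole point of the novel macroscopic-conservation trick in Lemma \ref{Lmm_mu_eps}, which converted a lossy normal derivative into a single tangential one, thereby leaving a gap that can be closed by the analyticity radius shrinkage $\sigma'(t)=-\lambda$. Once this accounting is verified, choosing $\lambda$ large will absorb $(C_1+2C\eps\lambda)\|g\|_{X^{1/2}}^2$ into $\lambda\|g\|_{X^{1/2}}^2$ in the subsequent closure argument; all other contributions reduce to the routine bounds above.
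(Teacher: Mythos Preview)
Your proposal is correct and follows essentially the same route as the paper: the paper likewise takes the $L^2_{x,v}$ inner product of \eqref{remainder_eqn_alpha} with $g_\m$, computes the boundary contribution via \eqref{g_m_BC} to obtain the $(2\alpha-\alpha^2)$ term, invokes the coercivity \eqref{def_tilde_c_0} and Lemma \ref{Lmm_mu_eps}, estimates each $S_{i,\m}$ (with the Leibniz expansion $\p^\m\Gamma_\eps(g,g)=\sum_{\n+\j=\m}C_\m^\n\Gamma_\eps(g_\n,g_\j)$ for $S_{1,\m}$, which you should make explicit), and then performs the weighted summation together with the same combinatorial shift $\m\to\m+e_i$ to close the tangential loss in $X^{1/2}$. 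The only minor imprecisions are notational (e.g.\ the Young split for $S_{2,\m}$ needs the $\eps^{-3/2}$ prefactor rather than $\eps^{-1}$), but the argument is the paper's.
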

\begin{proof}
Multiplying \eqref{remainder_eqn_alpha} by $\tfrac{\sigma(t)^{2 \m}}{(\m !)}g_\m $ and integrating over $(x,v) \in \Omega \times \R^3$,
\begin{equation}
  \begin{aligned}
  &\sum_{\m \in \mathbb{N}_0^2} \tfrac{\sigma(t)^{2 |\m|}}{(\m !)^2} \Big(\tfrac{1}{2} \tfrac{\d}{\d t} \|g_\m  \|_2^2 + \tfrac{1}{2 \eps} \int_{\Omega \times \R^3} v \cdot \nabla_x g_\m ^2 \d x \d v + \tfrac{1}{\eps^3} \int_{\Omega \times \R^3} g_\m  \L_\eps g_\m  \d x \d v \Big)\\
     = &- \sum_{\m \in \mathbb{N}_0^2} \tfrac{\sigma(t)^{2 |\m|}}{(\m !)^2}\int_{\Omega \times \R^3} \tfrac{(\p_t + \frac{1}{\eps} v \cdot \nabla_x ) {\mu_\eps}}{2 \mu_\eps} g_\m ^2 \d x \d v + \sum_{\m \in \mathbb{N}_0^2} \tfrac{\sigma(t)^{2 |\m|}}{(\m !)^2}\langle g_\m , S_\m  \rangle \,.
  \end{aligned}
\end{equation}
A direct calculation shows
\begin{equation}
  \tfrac{1}{2}\sum_{\m \in \mathbb{N}_0^2}\tfrac{\sigma(t)^{2 \m}}{(\m !)^2}\tfrac{\d}{\d t} \|g_\m  \|_2^2 =\tfrac{1}{2} \tfrac{\d}{\d t} \|g \|_{X}^2 + \lambda \|g \|_{X^\frac{1}{2}}^2 \,.
\end{equation}
Using the boundary condition \eqref{g_m_BC}, we have
\begin{equation}\label{3.1.3}
\begin{aligned}
  &\tfrac{1}{2 \eps} \int_{\Omega \times \R^3} v \cdot \nabla_x g_\m ^2 \d x \d v=\tfrac{1}{2 \eps} \int_{\Sigma}(v\cdot n)|g_\m|^2 \d \bar{x}\d v\\
   =&\tfrac{1}{2 \eps} \int_{\Sigma_-}(v\cdot n)\left[(1-\alpha)g_\m(t,x,R_x v) +\alpha P_\gamma g_\m \right]^2 \d\bar{x} \d v + \tfrac{1}{2 \eps}\int_{\Sigma_+}(v\cdot n)g_\m ^2 \d \bar{x}\d v\\
   =&\tfrac{(2-\alpha)\alpha}{2 \eps}\int_{\Sigma_+}(v\cdot n)g_\m^2 \d\bar{x}\d v -\tfrac{(2-\alpha)\alpha}{2 \eps}\sqrt{2\pi}\int_{\partial \Omega}\left(\int_{v\cdot n>0}(v\cdot n) g_\m \sqrt{\mu(v)}\d v\right)^2 \d\bar{x}\\
   =&\tfrac{(2-\alpha)\alpha}{2 \eps}\int_{\Sigma_+}(v\cdot n)\left|(I-P_\gamma)g_\m \right|^2d\bar{x}\d v \,.
\end{aligned}
\end{equation}
The coercivity of the linear operator $\L_\eps$ in \eqref{def_tilde_c_0} implies
\begin{equation}
 \sum_{\m \in \mathbb{N}_0^2} \tfrac{\sigma(t)^{2 |\m|}}{(\m !)^2}  \tfrac{1}{\eps^3} \int_{\Omega \times \R^3} g_\m  \L_\eps g_\m  \d x \d v \geq \tfrac{ \tilde{c}_0 }{\eps^3} \|\nu^{\frac{1}{2}} \P^\perp_\eps g \|_{X}^2 \,.
\end{equation} 

Recall from \eqref{def_h} that $h = \sqrt{\tfrac{\mu_\eps}{\mu_M}} \omega_\beta g$. Following the same argument as in \cite[Lemma 2.3]{Guo-2002-CPAM}, it follows that
\begin{equation}
 \begin{aligned}
&\sum_{\m \in \mathbb{N}_0^2} \tfrac{\sigma(t)^{2 |\m|}}{(\m !)^2} \langle g_\m, S_{1,\m} \rangle\\
 =& \sum_{\m \in \mathbb{N}_0^2, \n+ \j = \m} \tfrac{\sigma(t)^{2 |\m|}}{(\m !)^2}\eps^{2} C_{\m }^\n \int_{\Omega \times \R^3} \Gamma_\eps (g_\n,g_\j) g_\m  \d x \d v \\
\leq & \sum_{\m \in \mathbb{N}_0^2, \n+ \j = \m} \tfrac{\sigma(t)^{2 |\m|}}{(\m !)^2}\eps^{2} C_{\m }^\n C   \| \P_\eps^\perp g_\m  \|_\nu (\|h_\n \|_{\infty} \|g_\j \|_2 + \|h_\j \|_{\infty} \|g_\n \|_2 ) \\
\leq & \tfrac{\delta}{\eps^3} \|\nu^{\frac{1}{2}} \P^\perp_\eps g  \|_X^2 + \sum_{\m \in \mathbb{N}_0^2, \n+ \j = \m} \tfrac{\sigma(t)^{2 |\m|}}{(\m !)^2} (C^\n_\m)^2C_\delta \eps^7 (\|h_\n \|^2_{\infty} \|g_\j \|^2_2 + \|h_\j \|^2_{\infty} \|g_\n \|^2_2 ) \\
\leq & \tfrac{\delta}{\eps^3} \|\nu^{\frac{1}{2}} \P^\perp_\eps g  \|_X^2 + C_\delta \eps^7 \| h \|_Y^2 \|g\|^2_{X} \,.
 \end{aligned}
\end{equation} 
Using Proposition \ref{Prop_g_k_g_b_k}, it holds
\begin{equation}
\begin{aligned}
   \langle g_\m, S_{2,\m} \rangle = &\sum_{i=1}^{13} \eps^{\frac{i-4}{2}} C^\n_\m \big( \Gamma_\eps (g_\n, \p^\j (\sqrt{\tfrac{\mu}{\mu_\eps}}(g_i + g^b_i + g^{bb}_i))) + \Gamma_\eps ( \p^\j (\sqrt{\tfrac{\mu}{\mu_\eps}}(g_i + g^b_i + g^{bb}_i)), g_\n) , g_\m  
  \big) \\
  \leq & C \Big(\sum_{i=1}^{13} \eps^{\frac{i-4}{2}} C^\n_\m \| \omega_\beta \p^\j (\sqrt{\tfrac{\mu}{\mu_\eps}}(g_i + g^b_i + g^{bb}_i)) \|_\infty \Big)\| g_\n\|_\nu \|\P^\perp_\eps g_\m  \|_\nu  \\
  \leq & \tfrac{\delta + C_\delta C_0 \eps^3}{\eps^3} \| \P^\perp_\eps g_\m \|_\nu^2 + C_\delta C_0 \| g \|_X^2\,,
\end{aligned}
\end{equation}
where we used $\|g_\n \|_\nu \leq \|\P_\eps g_\n \|_\nu + \|\P_\eps^\perp g_\n \|_\nu \leq \| g_\n \|_2+ \|\P_\eps^\perp g_\n \|_\nu$. Similarly,
\begin{equation}
\begin{aligned}
  \sum_{\m \in \mathbb{N}_0^2} \tfrac{\sigma(t)^{2 |\m|}}{(\m !)^2} \langle g_\m, S_{3,\m} \rangle = & - \sum_{\m \in \mathbb{N}_0^2, \n + \j = \m} \tfrac{\sigma(t)^{2 |\m|}}{(\m !)^2}\tfrac{1}{\eps} C^\n_\m \big( \Gamma_\eps (g_\n, \tfrac{r_\j}{\sqrt{\mu_\eps}}) + \Gamma_\eps ( \tfrac{r_\j}{\sqrt{\mu_\eps}}, g_\n) , g_\m  
  \big) \\
  \leq & \sum_{\m \in \mathbb{N}_0^2} \tfrac{\sigma(t)^{2 |\m|}}{(\m !)^2}\tfrac{C}{\eps} C^\n_\m \|\omega_\beta \tfrac{r_\j}{\sqrt{\mu_\eps}}\|_\infty \| g_\n\|_\nu \|\P^\perp_\eps g_\m  \|_\nu  \\
  \leq & \tfrac{\delta}{\eps^3} \| \nu^{\frac{1}{2}} \P^\perp_\eps g \|_X^2 + C_\delta \eps \| g \|_X^2 \,.
\end{aligned}
\end{equation}
Applying Proposition \ref{Prop_g_k_g_b_k} together with the H\"older inequality yields
 \begin{equation}
 \sum_{\m \in \mathbb{N}_0^2} \tfrac{\sigma(t)^{2 |\m|}}{(\m !)^2}\langle g_\m, S_{4,\m} \rangle \leq \|S_{4} \|_X \|g \|_X \leq\|g \|_X^2 + C_0  \,.
\end{equation}
It follows from Lemma \ref{Lmm_mu_eps} that
  \begin{equation}
    \begin{aligned}
    & \sum_{\m \in \mathbb{N}_0^2} \tfrac{\sigma(t)^{2 |\m|}}{(\m !)^2} \int_{\Omega \times \R^3} \tfrac{(\p_t + \frac{1}{\eps} v \cdot \nabla_x ) {\mu_\eps}}{2 \mu_\eps} g_\m ^2 \d x \d v  \\
     \leq & \eps \sum_{\m \in \mathbb{N}_0^2} \tfrac{\sigma(t)^{2 |\m|}}{(\m !)^2} \tfrac{\d}{\d t} \mathbb{G}_\m (t) +C_1 \sum_{\m \in \mathbb{N}_0^2} \tfrac{\sigma(t)^{2 |\m|}}{(\m !)^2} \| g_\m \|_2 (\sum_{i=1}^{2}\|\p_i g_{\m} \|_2) \\
    &+ \sum_{\m \in \mathbb{N}_0^2} \tfrac{\sigma(t)^{2 |\m|}}{(\m !)^2} C\Big( \| g\|_X^2 + \eps^{-\frac{5}{2}} \|\nu^{\frac{1}{2}}\P^\perp_\eps g_\m \|_X^2 + \eps^7 \|h \|^2_Y  \Big)\,.
    \end{aligned}
  \end{equation}
Using the fact that $|\mathbb{G}_\m| \leq C \| g_\m\|_2^2$, we obtain
  \begin{equation}
     \begin{aligned}
      \eps \sum_{\m \in \mathbb{N}_0^2} \tfrac{\sigma(t)^{2 |\m|}}{(\m !)^2} \tfrac{\d}{\d t} \mathbb{G}_\m (t) = & \eps \tfrac{\d}{\d t}  \sum_{\m \in \mathbb{N}_0^2} \tfrac{\sigma(t)^{2 |\m|}}{(\m !)^2}\mathbb{G}_\m (t) - 2\eps \lambda \sum_{\m \in \mathbb{N}_0^2} \tfrac{ | \m |\sigma(t)^{2 |\m| - 1}}{(\m !)^2} \mathbb{G}_\m (t) \\
      \leq & \eps \tfrac{\d}{\d t}  \sum_{\m \in \mathbb{N}_0^2} \tfrac{\sigma(t)^{2 |\m|}}{(\m !)^2}\mathbb{G}_\m (t) + 2 C \eps \lambda  \| g \|^2_{X^\frac{1}{2}}\,.
     \end{aligned}
  \end{equation}
By direct computations,
\begin{equation}
   \begin{aligned}
    &\sum_{\m \in \mathbb{N}_0^2} \tfrac{\sigma(t)^{2 |\m|}}{(\m !)^2}\| g_\m \|_2 (\sum_{i=1}^{2}\|\p_i g_{\m} \|_2) \\
     = &  \sum_{\m \in \mathbb{N}_0^2} \big(\tfrac{\sigma(t)^{2 |\m| - 1} |\m|}{(\m !)^2}\big)^\frac{1}{2}\| g_\m \|_2 \times  \big(\tfrac{\sigma(t)^{2 |\m| + 1} (|\m| + 1)}{((\m_1 +1) ! \m_2 !)^2}\big)^\frac{1}{2} \|\p_1 g_{\m} \|_2 \tfrac{(\m_1+ 1)}{\sqrt{|\m| (|\m | +1)}}\\
     & + \sum_{\m \in \mathbb{N}_0^2} \big(\tfrac{\sigma(t)^{2 |\m| - 1} |\m|}{(\m !)^2}\big)^\frac{1}{2}\| g_\m \|_2 \times  \big(\tfrac{\sigma(t)^{2 |\m| + 1} (|\m| + 1)}{(\m_1  ! (\m_2 + 1) !)^2}\big)^\frac{1}{2} \|\p_2 g_{\m} \|_2 \tfrac{(\m_2+ 1)}{\sqrt{|\m| (|\m | +1)}}\\
    \leq &\|g \|_{X^\frac{1}{2}}^2 \,.
   \end{aligned}
\end{equation}

By combining all the estimates derived above and choosing $\eps_0$ and $\delta$ sufficiently small, the lemma follows.
\end{proof}

\subsection{\(L^\infty\) estimates in analytic spaces} 
In this subsection, we deduce the $L^\infty$ estimates in analytic spaces for the solutions of the remainder equation \eqref{remainder_eqn_alpha}.  As in \cite{Guo-2010-ARMA} or \cite{Guo-Huang-Wang-2021-ARMA}, we introduce the following backward characteristics. Given $(t,x,v)$, we define $[X(s),V(s)]$ satisfying
\begin{equation*}
  \begin{cases}
    \frac{\d}{\d s }X(s)= \tfrac{1}{\eps} V(s)\,,\quad \frac{\d}{\d s}V(s)=0\,,\\
    [X(t;t,x,v),V(t;t,x,v)]=[x,v]\,.
  \end{cases}
\end{equation*}
Then $[X(s;t,x,v),V(s;t,x,v)]=[x-\tfrac{1}{\eps}(t-s)v,v]=[X(s),V(s)]$. For $(x,v)\in \Omega\times \mathbb{R}^3$, the backward exist time $t_b(x,v)>0$ is defined to be the first moment at which the backward characteristics line $[X(s;0,x,v);V(s;0,x,v)]$ emerges from $\partial\Omega$. Hence, it holds that
$$t_b(x,v)=\inf\{t>0:x- \tfrac{1}{\eps}tv\notin \Omega\}\,,$$
which indicates that $x-t_b v\in \partial\Omega$. We also define $x_b(x,v)=x-\tfrac{1}{\eps}t_bv$. Note that we use $t_b(x,v)$ whenever it is well-defined.
For the half-space problem, the backward trajectory does not hit the boundary for the case $v_3<0$, and the backward trajectory will hit the boundary once for the case $v_3>0$. By the Maxwell reflection boundary condition, the backward characteristics can be written as
\begin{equation*}
\begin{aligned}
    X_{cl}(s;t,x,v)=&\mathbbm{1}_{[t_1,t)}(s)\{x- \tfrac{1}{\eps}(t-s)v\}\\
    &+\mathbbm{1}_{(-\infty,t_1)}(s)\{x_b-\tfrac{1}{\eps}[(1-\alpha)R_{x_1}v+\alpha v^\prime ](t_1-s)\}\,,\\
    V_{cl}(s;t,x,v)=&\mathbbm{1}_{[t_1,t)}(s)v+\mathbbm{1}_{(-\infty,t_1)}(s)[(1-\alpha)R_{x_1}v+\alpha v^\prime ]\,,
\end{aligned}
\end{equation*}
where $v^\prime \in \mathcal{V}:=\{v^\prime :v^\prime \cdot n>0\}$ and
$$(t_1,x_1)=(t-t_b(x,v),x_b(x,v))\,.$$

As in \cite{Caflish-1980-CPAM,Guo-Jang-Jiang-2010-CPAM}, we define 
\begin{equation*}
  \L_M g = -\tfrac{1}{\sqrt{\mu_M}} \{ B(\mu_\eps, \sqrt{\mu_M} g) + B( \sqrt{\mu_M} , \mu_\eps)\} = (\nu_\eps - K )g\,,
\end{equation*}
where $\nu_\eps$ is defined in \eqref{def_nu_eps} and \(Kg =K_{2}g - K_{1}g  \) with
\begin{align*}
K_{1}g &= \int\limits_{\mathbb{R}^{3}\times\mathbb{S}^{2}} b(\theta)|v_1 -v|^{\gamma}\sqrt{\mu_{M}(v_1)}\frac{\mu_\eps (v)}{\sqrt{\mu_{M}(v)}}\,g(v_1)\,\d v_1\,\d\omega \,,  \\
K_{2}g &= \int\limits_{\mathbb{R}^{3}\times\mathbb{S}^{2}} b(\theta)|v_1 -v|^{\gamma}\mu_\eps(v_1^{\prime})\frac{\sqrt{\mu_{M}(v^{\prime})}}{\sqrt{\mu_{M}(v)}}\,g(v^{\prime})\,\d v_1\,\d\omega \\
&\quad + \int\limits_{\mathbb{R}^{3}\times\mathbb{S}^{2}} b(\theta)|v_1 -v|^{\gamma}\mu_\eps(v^{\prime})\frac{\sqrt{\mu_{M}(v_1^\prime)}}{\sqrt{\mu_{M}(v)}}\,g(v_1^\prime)\,\d v_1\,\d\omega \,.
\end{align*}

Consider a smooth cutoff function \(0 \leq \Upsilon_{m} \leq 1\) such that for any \(m > 0\),
\[
\Upsilon_{m}(s) \equiv 1 \text{ for } s \leq m \,, \quad \Upsilon_{m}(s) \equiv 0 \text{ for } s \geq 2m \,.
\]
Then define
\begin{align*}
K^{m}g &= \int\limits_{\mathbb{R}^{3}\times\mathbb{S}^{2}} b(\theta)|v_1 -v|^{\gamma}\,\Upsilon_{m}(|v_1 -v|)\,\sqrt{\mu_{M}(v_1)}\frac{\mu_\eps(v)}{\sqrt{\mu_{M}(v)}}\,g(v_1)\,\d v_1\,\d\omega \\
&\quad - \int\limits_{\mathbb{R}^{3}\times\mathbb{S}^{2}} b(\theta)|v_1 -v|^{\gamma}\,\Upsilon_{m}(|v_1 -v|)\mu_\eps(v_1^\prime)\frac{\sqrt{\mu_{M}(v^{\prime})}}{\sqrt{\mu_{M}(v)}}\,g(v^{\prime})\,\d v_1\,\d\omega \\
&\quad - \int\limits_{\mathbb{R}^{3}\times\mathbb{S}^{2}} b(\theta)|v_1 -v|^{\gamma}\,\Upsilon_{m}(|v_1 -v|)\mu_\eps(v^{\prime})\frac{\sqrt{\mu_{M}(v_1^\prime)}}{\sqrt{\mu_{M}(v)}}\,g(v_1^\prime)\,\d v_1\,\d\omega \,,
\end{align*}
and also define
\[
K^{c} = K - K^{m}\,.
\]
The next lemma is Lemma 2.3 of \cite{Guo-Jang-Jiang-2010-CPAM}.
\begin{lemma}\label{Lmm_K_m}
\begin{equation}\label{est_K_m}
|K^{m}g(v)| \leq C m^{3+\gamma} \nu_\eps \|g\|_{\infty} 
\end{equation}
and \(K^{c}g(v) = \int_{\mathbb{R}^{3}} k(v,v^{\prime})g(v^{\prime}) \d v^{\prime}\) where the kernel \(k\) satisfies, for some \(c > 0\),
\begin{equation}\label{est_K_c}
k(v,v^{\prime}) \leq C_{m} \frac{\exp\{-c |v-v^{\prime}|^{2}\}}{|v-v^{\prime}|(1+|v|+|v^{\prime}|)^{1-\gamma}}\,. 
\end{equation}
\end{lemma}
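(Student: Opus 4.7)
The plan is to adapt the standard Grad-type estimates for linearized Boltzmann collision kernels (as developed in \cite{Guo-Jang-Jiang-2010-CPAM}, Lemma 2.3) to the present setting, where the base distribution $\mu_\eps$ is not a pure global Maxwellian but satisfies the comparison \eqref{mu_eps_mu_M}, namely $c_1 \mu_M \leq \mu_\eps \leq c_2 \mu_M^{\alpha_0}$ with $\alpha_0 > 1/2$. This comparison is what will allow the classical Maxwellian arguments to go through essentially unchanged.

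For the bound \eqref{est_K_m}, I would estimate each of the three constituent pieces of $K^m g$ separately. In every piece the cutoff $\Upsilon_m(|v_1 - v|)$ restricts the $v_1$-integration to the ball $|v_1 - v| \leq 2m$. Pulling $\|g\|_\infty$ outside and using $b(\theta) \leq C$, the remaining integral over $v_1$ and $\omega$ is controlled by
\[
\int_{|v_1 - v|\leq 2m} |v_1 - v|^\gamma \sqrt{\mu_M(v_1)}\, \d v_1 \leq C m^{3+\gamma},
\]
valid because $\gamma > -3$. For the terms with $g(v')$ and $g(v_1')$, one performs a change of variables in $\omega$ (which preserves $|v-v_1|$) and then exploits $\sqrt{\mu_M(v')\mu_M(v_1')} = \sqrt{\mu_M(v)\mu_M(v_1)}$ to reduce to the same type of integral. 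The prefactor $\mu_\eps(v)/\sqrt{\mu_M(v)} \leq C \mu_M(v)^{\alpha_0 - 1/2}$ decays faster than any polynomial and hence can be absorbed into $C \nu_\eps(v) \sim C\langle v\rangle^\gamma$, giving the stated bound.

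For the kernel representation and estimate \eqref{est_K_c}, the plan is to apply the classical Carleman-type change of variables from $(v_1, \omega) \in \R^3 \times \mathbb{S}^2$ to the new variable $v' \in \R^3$ (with $v_1'$ determined by collision conservation and the orthogonal direction being integrated out on a two-dimensional plane $E_{v,v'}$ perpendicular to $v' - v$). This produces the representation $K^c g(v) = \int_{\R^3} k(v,v') g(v') \d v'$ with the kernel taking the schematic form
\[
k(v,v') = \frac{C}{|v-v'|} \int_{E_{v,v'}} \bigl\{\mu_\eps(v_1') \sqrt{\mu_M(v')/\mu_M(v)} + \mu_\eps(v') \sqrt{\mu_M(v_1')/\mu_M(v)}\bigr\}|v-v_1|^{\gamma-1} \d\pi(v_1),
\]
minus the analogous loss term, where the denominator $|v-v'|$ comes from the Jacobian of the change of variables, and $|v - v'| \geq 2m$ is enforced by the complement cutoff $1 - \Upsilon_m$. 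Using \eqref{mu_eps_mu_M} to replace $\mu_\eps$ by $C\mu_M^{\alpha_0}$ and then using elementary Gaussian manipulations on the plane $E_{v,v'}$ (see for instance Grad's classical paper or \cite{Caflish-1980-CPAM,Guo-2010-ARMA}) yields the Gaussian factor $\exp\{-c|v-v'|^2\}$ and the polynomial factor $(1+|v|+|v'|)^{-(1-\gamma)}$.

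The main technical obstacle will be the kernel bound for $k(v,v')$, specifically proving the combination of the sharp $1/|v-v'|$ singularity with the polynomial weight $(1+|v|+|v'|)^{\gamma-1}$ and the Gaussian tail. The $1/|v-v'|$ singularity is intrinsic to the Carleman change of variables; the Gaussian decay needs careful handling because one power of $1/2$ from $\mu_\eps \leq C\mu_M^{\alpha_0}$ with $\alpha_0 > 1/2$ must be retained to produce decay in $|v'|$ (not just in the relative variable $|v - v'|$), and this is exactly where the assumption $\alpha_0 > 1/2$ in \eqref{mu_eps_mu_M} is used. Once this kernel estimate is established, together with \eqref{est_K_m} one obtains the complete decomposition $K = K^m + K^c$ used later to run the $L^\infty$-estimate via the Duhamel iteration along characteristics.
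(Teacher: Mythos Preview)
Your proposal is correct and in fact goes beyond what the paper does: the paper simply states that this lemma ``is Lemma 2.3 of \cite{Guo-Jang-Jiang-2010-CPAM}'' and gives no proof at all, whereas you have correctly identified that same source and sketched the essential ingredients of the argument (the $m^{3+\gamma}$ volume bound for $K^m$, the Carleman change of variables for $K^c$, and the role of the comparison \eqref{mu_eps_mu_M} with $\alpha_0>1/2$).
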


\begin{lemma}[$L^\infty$ Estimate]\label{Lmm_L_infty} Recall the definition of $h$ in \eqref{def_h}. Under the same assumptions as in Proposition \ref{Prop_g_k_g_b_k}, there holds
  \begin{equation}\label{h_L_infty}
    \sup_{t \in [0,T_0]} \| \eps^3 h_\m (t) \|_\infty \leq C \big( \| \eps^3 h_\m (0) \|_\infty + \sup_{t \in [0,T_0]} \|g_\m(t)\|_2 + \eps^6 \sup_{t \in [0,T_0]} \|\nu^{-1} \omega_\beta  \sqrt{\tfrac{\mu_\eps}{\mu_M}} S_\m (t)\|_\infty \big) \,.
  \end{equation}  
\end{lemma}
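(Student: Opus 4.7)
I follow the now-standard $L^2$\,-\,$L^\infty$ framework of Guo and collaborators, adapted to the Maxwell reflection boundary and the $\varepsilon^3$ scaling.

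First, I conjugate the equation \eqref{remainder_eqn_alpha} for $g_\m$ by $\sqrt{\mu_\eps/\mu_M}\,\omega_\beta$ to obtain an equation for $h_\m$. Writing $\L_\eps g_\m = \nu_\eps g_\m - K_\eps g_\m$ and absorbing the drift $(\p_t+\frac{1}{\eps}v\cdot\nabla_x)\mu_\eps/(2\mu_\eps)\,g_\m$ together with the difference $\L_\eps$ vs.\ $\L_M$ into an effective bounded operator, $h_\m$ satisfies
\begin{equation*}
\p_t h_\m + \tfrac{1}{\eps} v\cdot\nabla_x h_\m + \tfrac{1}{\eps^3}\nu_\eps h_\m = \tfrac{1}{\eps^3} K_w h_\m + \mathcal{R}_\m,
\end{equation*}
where $K_w = \omega_\beta\sqrt{\mu_\eps/\mu_M}\,K(\omega_\beta^{-1}\sqrt{\mu_M/\mu_\eps}\,\cdot\,)$ still admits the splitting $K_w = K_w^m + K_w^c$ of Lemma \ref{Lmm_K_m}, and $\mathcal{R}_\m = \sqrt{\mu_\eps/\mu_M}\,\omega_\beta S_\m + (\text{lower order})$. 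Since $\mu_\eps|_{x_3=0}=\mu$ and $\omega_\beta(R_x v)=\omega_\beta(v)$, the Maxwell boundary condition \eqref{g_m_BC} is preserved for $h_\m$ (with a modified diffuse kernel that is still pointwise controlled by $\mu_M^{1/2}\omega_\beta$). Set $H_\m := \eps^3 h_\m$; then $H_\m$ satisfies the same system with source $\eps^3 \mathcal{R}_\m$.

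Next, I integrate along the backward characteristics $(X_{cl}(s),V_{cl}(s))$ introduced in the text. For $v_3<0$ the trajectory never hits the boundary and Duhamel gives
\begin{equation*}
H_\m(t,x,v) = e^{-\int_0^t \nu_\eps/\eps^3\, ds'}H_\m(0,\cdot) + \int_0^t e^{-\int_s^t \nu_\eps/\eps^3\, ds''}\Bigl(\tfrac{1}{\eps^3}K_w H_\m + \eps^3 \mathcal{R}_\m\Bigr)\,ds.
\end{equation*}
For $v_3>0$, the trajectory hits $\p\Omega$ at $t_1=t-t_b$, and the boundary value is decomposed via Maxwell reflection into a specular piece (amplitude $1-\alpha$) and a diffuse piece (amplitude $\alpha$). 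The source-term bound $\int_0^t e^{-\int_s^t \nu_\eps/\eps^3\, ds''} \eps^3 \mathcal{R}_\m\,ds \lesssim \eps^6 \|\nu^{-1}\omega_\beta\sqrt{\mu_\eps/\mu_M}\,S_\m\|_\infty$ is immediate from $\nu_\eps\sim\nu$ and absorption of $e^{-\nu t/\eps^3}\cdot\eps^3/\nu \lesssim 1$. The main work is the $K_w$ term: a single application gives only $\lesssim \sup_{s\le t}\|H_\m(s)\|_\infty$ with an $O(1)$ constant, which cannot close the estimate.

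Therefore I iterate the Duhamel formula once, inserting the representation of $H_\m(s,X_{cl}(s),V_{cl}(s))$ back into the $\frac{1}{\eps^3}K_w H_\m$ integral. The resulting double integral is then treated separately for $K_w^m$ and $K_w^c$. By \eqref{est_K_m}, the part from $K_w^m\circ K_w^m$ (and cross terms) is bounded by $C m^{3+\gamma}\sup_s\|H_\m(s)\|_\infty$, and I choose $m$ small to absorb this into the left-hand side. For the $K_w^c\circ K_w^c$ contribution, Lemma \ref{Lmm_K_m} provides a kernel $k(v,v')$ with a Gaussian factor in $|v-v'|$. Applying Cauchy–Schwarz in $v''$ against this kernel and performing the change of variables $y = x - \tfrac{1}{\eps}(t-s)v - \tfrac{1}{\eps}(s-s')v''$, whose Jacobian in $v''$ is of order $(\eps/(s-s'))^3$, converts the $L^\infty_{x,v}$ control of $H_\m$ into the $L^2_{x,v}$ norm of $g_\m$. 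The $\eps^3$ from the Jacobian exactly balances the $(\eps^3)^{-2}$ prefactor in the double integral, leaving a clean bound $C\sup_s\|g_\m(s)\|_2$. The bouncing and diffuse-boundary contributions are handled identically, using the strong damping $e^{-\nu t/\eps^3}$ to suppress multiple reflections; after at most one bounce the remainder is exponentially small in $\eps^{-3}$.

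\textbf{Main obstacle.} The delicate point is the change of variables in the iterated $K_w^c$ term when the characteristic passes close to the boundary or grazes it: the Jacobian degenerates as $s-s'\to 0$, and one must truncate that sliver (of total measure $O(\eps)$) and absorb it by the $L^\infty$ side with a small constant. Simultaneously, the Maxwell-reflected piece forces one to keep track of two species of characteristics in the double integral, but because $\alpha=O(1)$ and $1-\alpha\le 1$, the diffuse kernel $\sqrt{2\pi\mu}|v'\cdot n|\sqrt{\mu(v')}$ provides its own Gaussian damping and the same change-of-variables argument applies. Summing over $\m$ is not needed here since the estimate is pointwise in $\m$; this is what ultimately justifies the analytic-$L^\infty$ bound $\|h\|_Y$ used in Lemma \ref{Lmm_L_2}.
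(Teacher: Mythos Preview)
Your approach is essentially the same as the paper's: conjugate to $h_\m$, integrate along the backward characteristics with one application of the Maxwell boundary (which in the half-space is exactly one bounce, not ``at most one''), iterate Duhamel once more on the $K_\omega$ term, split $K=K^m+K^c$ via Lemma~\ref{Lmm_K_m}, and convert the double $K^c$ contribution to $\|g_\m\|_2$ by a change of variables in the intermediate velocity. Two imprecisions are worth fixing. First, the drift term $\tfrac{(\p_t+\eps^{-1}v\cdot\nabla_x)\mu_\eps}{2\mu_\eps}g_\m$ does not need to be ``absorbed'': it cancels exactly when you multiply by $\sqrt{\mu_\eps/\mu_M}\,\omega_\beta$, because $(\p_t+\eps^{-1}v\cdot\nabla_x)\sqrt{\mu_\eps/\mu_M}=\tfrac{(\p_t+\eps^{-1}v\cdot\nabla_x)\mu_\eps}{2\mu_\eps}\sqrt{\mu_\eps/\mu_M}$ and $\mu_M$ is global; also $\sqrt{\mu_\eps/\mu_M}\,\L_\eps=\L_M(\sqrt{\mu_\eps/\mu_M}\,\cdot\,)$ exactly, so no ``$\L_\eps$ vs.\ $\L_M$'' error term appears. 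Second, the change of variables is in the \emph{intermediate} velocity $v'$ (along which the second segment of the characteristic runs), not in $v''$; the position of $h_\m$ after two iterations is $x-\tfrac{t-s}{\eps}v-\tfrac{s-s'}{\eps}v'$, and $v''$ is only the velocity argument coming from the inner kernel $k_\omega(v',v'')$.

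Your $\eps$-bookkeeping is off and should be corrected. The Jacobian $|\det(\partial y/\partial v')|=\big(\tfrac{s-s'}{\eps}\big)^3$ is bounded \emph{below} by $(\kappa_*\eps^2)^3$ only after you excise the strip $s-s'\le \kappa_*\eps^3$ (not $O(\eps)$), and this produces a factor $\eps^{-3}$ (after the square root from Cauchy--Schwarz), not $\eps^3$. The actual balance is: prefactor $\eps^{-6}$, two damped time integrals $\eps^3\cdot\eps^3$, Jacobian $\eps^{-3}$, giving $\tfrac{C}{\eps^3}\|g_\m\|_2$ for $h_\m$; multiplying by $\eps^3$ (i.e.\ passing to $H_\m=\eps^3 h_\m$) then yields the clean $C\|g_\m\|_2$ in \eqref{h_L_infty}. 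The excised strip contributes $C_{N_0}\kappa_*\sup_s\|h_\m\|_\infty$, absorbed by taking $\kappa_*$ small. With these corrections your outline matches the paper's proof.
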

\begin{proof}
  Define $K_\omega g =  \omega_\beta K(\tfrac{g}{\omega_\beta}) $.  Based on the remainder equation \eqref{remainder_eqn}, we deduce that $h_{\m}$ satisfies
  \begin{equation}\label{3.2.2}
    \begin{cases}
          \partial_t h_{\m}+ \tfrac{1}{\eps} v \cdot\nabla_x h_{\m}+\tfrac{1}{\varepsilon^3 }\nu_\eps h_{\m}-\tfrac{1}{\varepsilon^3}K_\omega h_{\m}= \omega_\beta  \sqrt{\tfrac{\mu_\eps}{\mu_M}} S_{\m} \,,\\
          h_{\m}(t,x,v)=\int_{v^\prime \cdot n>0}\mathcal{R}(v,v^\prime )h_{\m}(t,x,v^\prime ) \d v^\prime \,, \quad (x,v)\in \Sigma_- \,,
    \end{cases}
    \end{equation}
  where we used the notation
  \begin{equation}\label{3.2.3}
  \mathcal{R}(v,v^\prime )=(1-\alpha)\delta(v^\prime -(v-2(v\cdot n)n))+\alpha\sqrt{2\pi} \sqrt{\tfrac{\mu_M(v^\prime )}{\mu_M(v)}}\mu(v)|v^\prime \cdot n|\frac{\omega_\beta(v)}{\omega_\beta(v^\prime )} \,.
  \end{equation}
  By integrating $\eqref{3.2.2}_1$ along the backward characteristics, we obtain 
  \begin{equation}\label{3.2.4}
    \begin{aligned}
        h_\m(t,x,v)=&\mathbbm{1}_{t_1\leq 0}\Bigg\{ \exp\{-\tfrac{1}{\eps^3}\int_{0}^{t}{\nu_\eps(\tau) \d \tau} \} h_\m(0,x-\tfrac{1}{\eps}tv,v) \\
        &+ \int_{0}^{t}  \exp\{- \tfrac{1}{\eps^3}\int_{s}^{t} \nu_\eps(\tau) \d \tau \}  \left[\tfrac{1}{\varepsilon^3}K_\omega  h_\m + \omega_\beta  \sqrt{\tfrac{\mu_\eps}{\mu_M}}S_\m\right](s,x+\tfrac{1}{\eps}(s-t)v,v) \d s  \Bigg\} \\
        &+ \mathbbm{1}_{t_1>0}\Bigg\{\exp\{-\tfrac{1}{\eps^3}\int_{t_1}^{t}{\nu_\eps(\tau) \d \tau} \} h_\m(t_1,x_1,v) \\
        &+ \int_{t_1}^{t} \exp\{- \tfrac{1}{\eps^3}\int_{s}^{t} \nu_\eps(\tau) \d \tau \} \left[\tfrac{1}{\varepsilon^3}K_\omega  h_\m + \omega_\beta \sqrt{\tfrac{\mu_\eps}{\mu_M}} S_\m\right](s,x+ \tfrac{1}{\eps}(s-t)v,v) \d s  \Bigg\} \,.
    \end{aligned}
    \end{equation}
  Using the boundary condition $\eqref{3.2.2}_2$, together with \eqref{3.2.4} again, it follows that
  \begin{equation}\label{3.2.5}
    h_{\m}(t,x,v)=\sum_{i=1}^{5}\mathcal{I}_{i} \,,
  \end{equation}
  with
  \begin{align*}
     \mathcal{I}_1 =&\mathbbm{1}_{t_1\leq 0}\exp\{-\tfrac{1}{\eps^3}\int_{0}^{t}{\nu_\eps(\tau) \d \tau} \} h_{\m}(0,x- \tfrac{1}{\eps}tv,v)\\
     &+\mathbbm{1}_{t_1>0}\exp\{-\tfrac{1}{\eps^3}\int_{0}^{t}{\nu_\eps(\tau) \d \tau} \} \int_{v_1\cdot n>0}\mathcal{R}(v,v_1)h_{\m}(0,x_1-\tfrac{1}{\eps}t_1v_1,v_1) \d v_1 \,, \\
     \mathcal{I}_2 =&\int_{\max\{0,t_1\}}^{t}\exp\{-\tfrac{1}{\eps^3}\int_{s}^{t}{\nu_\eps(\tau) \d \tau} \}(\omega_\beta \sqrt{\tfrac{\mu_\eps}{\mu_M}} S_\m)(s,x+ \tfrac{1}{\eps}(s-t)v,v) \d s\\
      &+\mathbbm{1}_{t_1>0}\int_{v_1\cdot n>0}\int_{0}^{t_1}\exp\{-\tfrac{1}{\eps^3}\int_{s}^{t}{\nu_\eps(\tau) \d \tau} \}\mathcal{R}(v,v_1) (\omega_\beta \sqrt{\tfrac{\mu_\eps}{\mu_M}} S_\m )(s,x_1+\tfrac{1}{\eps}(s-t_1)v_1,v_1)\d s \d v_1 \,, \\
     \mathcal{I}_3  =& \tfrac{1}{\varepsilon^3}\int_{\max\{0,t_1\}}^{t}\exp\{-\tfrac{1}{\eps^3}\int_{s}^{t}{\nu_\eps(\tau) \d \tau} \}K^m_\omega h_{\m}(s,x+ \tfrac{1}{\eps}(s-t)v,v)\d s   \\
       &+ \tfrac{1}{\varepsilon^3} \mathbbm{1}_{t_1>0}\int_{v_1\cdot n>0}\int_{0}^{t_1}\mathcal{R}(v,v_1)\exp\{-\tfrac{1}{\eps^3}\int_{s}^{t}{\nu_\eps(\tau) \d \tau} \} K^m_\omega h_\m(s,x_1+ \tfrac{1}{\eps}(s-t_1)v_1,v_1)\d s \d v_1 \,,\\
     \mathcal{I}_4 =& \tfrac{1}{\varepsilon^3}\int_{\max\{0,t_1\}}^{t}\exp\{-\tfrac{1}{\eps^3}\int_{s}^{t}{\nu_\eps(\tau) \d \tau} \}K^c_\omega h_{\m}(s,x+ \tfrac{1}{\eps}(s-t)v,v)\d s \,, 
   \end{align*}
  and
  \begin{equation*}
    \mathcal{I}_5=\tfrac{1}{\varepsilon^3} \mathbbm{1}_{t_1>0}\int_{v_1\cdot n>0}\int_{0}^{t_1}\mathcal{R}(v,v_1)\exp\{-\tfrac{1}{\eps^3}\int_{s}^{t}{\nu_\eps(\tau) \d \tau} \} K^c_\omega h_\m(s,x_1+ \tfrac{1}{\eps}(s-t_1)v_1,v_1)\d s \d v_1 \,.
  \end{equation*}
  
  First, we observe that
  \begin{equation*}
    \nu_\eps(\tau) \sim \int |v - v_1|^\gamma \mu  (v_1)\d v_1 \sim \nu = \l v \r^\gamma\,,
  \end{equation*}
  it thereby follows
  \begin{equation*}
    \int_{0}^{t} \exp\{- \tfrac{1}{\eps^3}\int_{s}^{t} \nu_\eps(\tau) \d \tau \} \nu_\eps(\tau) \d s\leq c \int_{0}^{t} \exp \{- \tfrac{c \nu (t-s)}{\eps^3} \} \nu \d s = O(\eps^3)\,.
  \end{equation*}
  Then, it is straightforward to see that
  \begin{equation*} 
    |\mathcal{I}_1|\leq C\|h_{\m}(0)\|_\infty\,,
  \end{equation*} 
 and 
  \begin{equation*}
  | \I_2 | \leq C \eps^3\sup_{s \in [0,t]} \|\nu^{-1} \omega_\beta   \sqrt{\tfrac{\mu_\eps}{\mu_M}} S_{\m}(s) \|_\infty  \,.
  \end{equation*} 
  By \eqref{est_K_m}, it follows
  \begin{equation*}
  | \I_3 | \leq C m^{3 + \gamma} \sup_{s \in [0,t]} \| h_\m(s)\|_\infty  \,. 
  \end{equation*}
  In what follows we present a detailed computation for the non-local term $\mathcal{I}_5$. The estimates for $\mathcal{I}_4$ are similar and much easier and will be omitted for brevity.
 
 We now denote $(t_0^\prime ,x_0^\prime ,v_0^\prime ) = (s,X_{cl}(s;t_1,x_1,v_1),v^\prime )$, and define a new backward-time cycle as follows:
  $$(t_0^\prime ,x_1^\prime ,v_1^\prime )=(t^\prime _1-t_b(x^\prime _0,v_0^\prime ),x_b(x^\prime _0,v^\prime _0),v^\prime _1 ) \,,$$
  for $v_1^\prime \in \mathcal{V^\prime }=\{v_1^\prime :v_1^\prime \cdot n>0\}$. Let $k_\omega(v, v^\prime)$ be the corresponding kernel associated with $K^c_\omega$. We then employ \eqref{3.2.4} again to obtain
  \begin{align*}
      \mathcal{I}_5=& \tfrac{1}{\eps^3}\mathbbm{1}_{t_1>0}\int_{v_1\cdot n>0}\int_{0}^{t_1}\mathcal{R}(v,v_1)\exp\{-\tfrac{1}{\eps^3}\int_{s}^{t}{\nu_\eps(\tau) \d \tau} \} \int_{\mathbb{R}^3}k_\omega  (v_1,v^\prime )\\
      &\qquad\qquad \times h_{\m}(s,x_1+ \tfrac{1}{\eps}(s-t_1)v_1,v^\prime )\d s \d v^\prime \d v_1:=\sum_{i=1}^{4}\mathcal{J}_i \,,
  \end{align*}
  with
  \begin{align*}
        \mathcal{J}_1 = & \tfrac{1}{\eps^3} \mathbbm{1}_{t_1>0} \int_{v_1\cdot n>0} \int_{0}^{t_1} \mathcal{R}(v,v_1) \exp\{-\tfrac{1}{\eps^3}\int_{s}^{t}{\nu_\eps(\tau) \d \tau} \}  \int_{\mathbb{R}^3} k_\omega  (v_1,v^\prime )\\
        &\times \left\{ \mathbbm{1}_{t_1^\prime \leq 0} \exp\{-\tfrac{1}{\eps^3}\int_{0}^{t^\prime _0}{\nu_\eps(\tau) \d \tau} \} h_{\m}(0,x^\prime _0- \tfrac{1}{\eps}t^\prime _0v^\prime _0,v^\prime _0) \right. \\
        &\qquad + \left. \mathbbm{1}_{t^\prime _1>0} \exp\{-\tfrac{1}{\eps^3}\int_{t_0^\prime }^{t_1^\prime }{\nu_\eps(\tau) \d \tau} \} \int_{v_1^\prime \cdot n>0} \mathcal{R}(v_0^\prime ,v_1^\prime ) h_{\m}(0,x_1^\prime - \tfrac{1}{\eps} t_1^\prime v_1^\prime ,v_1^\prime ) \d v_1^\prime  \right\} \d s  \d v^\prime  \d v_1 \,,
    \end{align*}

     \begin{align*}
        \mathcal{J}_2 = &\tfrac{1}{\eps^3} \mathbbm{1}_{t_1>0}\int_{v_1\cdot n>0}\int_{0}^{t_1}\mathcal{R}(v,v_1)\exp\{-\tfrac{1}{\eps^3}\int_{s}^{t}{\nu_\eps(\tau) \d \tau} \} \int_{\mathbb{R}^3}k_\omega  (v_1,v^\prime ) \\
        & \times \int_{\max\{0,t_1^\prime \}}^{t_0^\prime } \exp\{-\tfrac{1}{\eps^3} \int_{s^\prime }^{t_0^\prime }{\nu_\eps(\tau) \d \tau} \} \omega_\beta \sqrt{\tfrac{\mu_\eps}{\mu_M}} S_\m \left(s^\prime ,x^\prime _0+  \tfrac{1}{\eps}(s^\prime -t^\prime _0)v_0^\prime ,v^\prime _0\right) \d s ^\prime  \d s  \d v^\prime  \d v_1 \\
        & +\tfrac{1}{\eps^3} \mathbbm{1}_{t_1>0,t_1^\prime >0}\int_{v_1\cdot n>0}\int_{0}^{t_1}\mathcal{R}(v,v_1)\exp\{-\tfrac{1}{\eps^3}\int_{s}^{t}{\nu_\eps(\tau) \d \tau} \} \int_{\mathbb{R}^3}k_\omega  (v_1,v^\prime ) \\
        & \times \int_{v_1^\prime \cdot n>0}\int_{0}^{t_1^\prime }\mathcal{R}(v_0^\prime ,v_1^\prime ) \exp\{-\tfrac{1}{\eps^3} \int_{s^\prime }^{t_0^\prime }{\nu_\eps(\tau) \d \tau} \} \omega_\beta \sqrt{\tfrac{\mu_\eps}{\mu_M}} S_\m(s^\prime ,x_1^\prime +  \tfrac{1}{\eps} (s^\prime -t^\prime _1)v_1^\prime ,v_1^\prime ) \d s ^\prime  \d s  \d v^\prime  \d v_1^\prime \d v_1 \,, 
     \end{align*}
  \begin{align*}
        \mathcal{J}_3 = & \tfrac{1}{\eps^6} \mathbbm{1}_{t_1>0}\int_{v_1\cdot n>0}\int_{0}^{t_1}\mathcal{R}(v,v_1)\exp\{-\tfrac{1}{\eps^3}\int_{s}^{t}{\nu_\eps(\tau) \d \tau} \} \int_{\mathbb{R}^3}k_\omega  (v_1,v^\prime ) \\
        & \times \int_{\max\{0,t_1^\prime \}}^{t_0^\prime }  \exp\{-\tfrac{1}{\eps^3} \int_{s^\prime }^{t_0^\prime }{\nu_\eps(\tau) \d \tau} \} K^m_\omega h_\m \left(s^\prime ,x^\prime _0+ \tfrac{1}{\eps}(s^\prime -t^\prime _0)v_0^\prime ,v^\prime _0\right) \d s ^\prime  \d s  \d v^\prime  \d v_1 \\
        & +\tfrac{1}{\eps^6} \mathbbm{1}_{t_1>0,t_1^\prime >0}\int_{v_1\cdot n>0}\int_{0}^{t_1}\mathcal{R}(v,v_1)\exp\{-\tfrac{1}{\eps^3}\int_{s}^{t}{\nu_\eps(\tau) \d \tau} \} \int_{\mathbb{R}^3}k_\omega  (v_1,v^\prime ) \\
        & \times \int_{v_1^\prime \cdot n>0}\int_{0}^{t_1^\prime }\mathcal{R}(v_0^\prime ,v_1^\prime ) \exp\{-\tfrac{1}{\eps^3} \int_{s^\prime }^{t_0^\prime }{\nu_\eps(\tau) \d \tau} \} K^m_\omega h_\m(s^\prime ,x_1^\prime + \tfrac{1}{\eps}(s^\prime -t^\prime _1)v_1^\prime ,v_1^\prime ) \d s ^\prime  \d s  \d v^\prime  \d v_1^\prime \d v_1 \,,
    \end{align*}
  \begin{multline*}
        \mathcal{J}_4 =\tfrac{1}{\varepsilon^6} \mathbbm{1}_{t_1>0}\int_{v_1\cdot n>0}\int_{0}^{t_1}\mathcal{R}(v,v_1)\exp\{-\tfrac{1}{\eps^3}\int_{s}^{t}{\nu_\eps(\tau) \d \tau} \}\int_{\mathbb{R}^3}k_\omega  (v_1,v^\prime )\\
         \times\int_{\max\{0,t_1^\prime \}}^{t_0^\prime }\exp\{-\tfrac{1}{\eps^3}  \int_{s^\prime }^{t_0^\prime }{\nu_\eps(\tau) \d \tau} \}
        \times K^c_\omega  h_{\m}(s^\prime ,x^\prime _0+ \tfrac{1}{\eps}(s^\prime -t^\prime _0)v_0^\prime ,v^\prime _0)\d s ^\prime \d s \d v^\prime \d v_1 \,,
  \end{multline*}
  and
    \begin{align*}
        \mathcal{J}_5=\tfrac{1}{\varepsilon^6}\mathbbm{1}_{t_1>0,t_1^\prime >0}\int_{v_1\cdot n>0}\int_{0}^{t_1}\mathcal{R}(v,v_1)\exp\{-\tfrac{1}{\eps^3}\int_{s}^{t}{\nu_\eps(\tau) \d \tau} \} \int_{\mathbb{R}^3}k_\omega  (v_1,v^\prime ) \int_{v_1^\prime \cdot n>0}\int_{0}^{t_1^\prime }\\
        \exp\{-\tfrac{1}{\eps^3}\int_{s^\prime }^{t_0^\prime }{\nu_\eps(\tau) \d \tau} \}
        \times \mathcal{R}(v_0^\prime ,v_1^\prime )K^c_\omega  h_\m(s^\prime ,x_1^\prime + \tfrac{1}{\eps}(s^\prime -t^\prime _1)v_1^\prime ,v_1^\prime )\d s ^\prime \d s \d v^\prime \d v_1^\prime \d v_1 \,.
      \end{align*}
  Recalling \eqref{est_K_c}, we have 
  \begin{equation}\label{est_k_omega}
    k_\omega(v, v^\prime) \leq \tfrac{C \exp\{- \tilde{c} |v - v^\prime |^2\}}{|v - v^\prime| (1 + |v| + |v^\prime|)^{1-\gamma}} \,,
  \end{equation}
   which implies that
  \begin{equation*}
    \int_{\R^3} |k_\omega (v,v^\prime) \d v^\prime \leq C \nu(v)\,.
  \end{equation*}
  Consequently, we have
  \begin{equation}\label{3.2.17}
    |\mathcal{J}_1|\leq \|h_{\m}(0)\|_\infty\,,
  \end{equation}
  and 
  \begin{equation}\label{3.2.18}
    |\mathcal{J}_2|\leq C \eps^3 \sup_{s \in [0,t]}\| \nu^{-1} \omega_\beta \sqrt{\tfrac{\mu_\eps}{\mu_M}} S_\m (s) \|_\infty \,.
  \end{equation}
  By \eqref{est_K_m},
  \begin{equation}
\|\J_4 | \leq C m^{3 + \gamma} \sup_{s \in [0,t]} \| h_\m (s) \|_\infty \,.
  \end{equation}
  Next, we only compute $\J_5$ because the estimates for $\mathcal{J}_4$ are similar and easier. Note that $K^c_\omega h(v^\prime_1) = \int_{\R^3} k_\omega (v_1^\prime , v^{\prime \prime}) h(v^{\prime \prime}) \d v^{\prime \prime}$. We divide our computations into the following three cases:
  
  \vspace{4pt}\noindent
  \textbf{Case 1.} $|v_1|\geq N_0$ or $ |v_1^\prime |\geq N_0$ with $N_0$ suitable large. It follows from \eqref{est_K_c} that
  \begin{equation*} 
    \int_{\mathbb{R}^3}|k_\omega  (v_1,v^\prime )|\d v^\prime \leq \frac{C}{N_0} \,,
  \end{equation*}
  or
  \begin{equation*} 
    \int_{\mathbb{R}^3}|k_\omega  (v_1^\prime ,v^{\prime \prime}  )|\d v^{\prime \prime}  \leq \frac{C}{N_0} \,,
  \end{equation*}
  which together with \eqref{3.2.3}, yields that
  \begin{equation*}
    |\J_5|\leq \frac{C}{N_0}\sup_{s \in [0,t]}\|h_{\m}(s)\|_\infty \,.
  \end{equation*}
  
  \noindent
  \textbf{Case 2.} $|v_1|\leq N_0$ and $|v^\prime |\geq 2N_0$ or $|v_1^\prime |\leq N_0$ and $|v^{\prime \prime}  |\geq 2N_0$. Note that either $|v^\prime -v_1|\geq N_0$ or $|v_1^\prime -v^{\prime \prime}  |\geq N_0$ holds, and thus either of the following holds
  \begin{equation*} 
   | k_\omega  (v_1,v^\prime )|\leq C e^{-\frac{\eta N_0^2}{8}}| k_\omega  (v_1,v^\prime )e^{\frac{\eta|v_1-v^\prime |^2}{8}}|\,,
  \end{equation*}
  or
  \begin{equation*} 
    |k_\omega  (v_1^\prime ,v^{\prime \prime}  )|\leq C e^{-\frac{\eta N_0^2}{8}} |k_\omega  (v_1^\prime ,v^{\prime \prime}  ) e^{\frac{\eta|v_1^\prime -v^{\prime \prime}  |^2}{8}}|\,,
  \end{equation*}
  for some small $\eta$. Then, applying \eqref{est_k_omega}, we obtain
  \begin{equation*} 
    |\J_5|\leq C_\eta e^{-\frac{\eta}{8}N_0^2}\sup_{s \in [0,t]}\|h_\m(s)\|_\infty \,.
  \end{equation*}

  \noindent
  \textbf{Case 3a.} $|v_1|\leq N_0,|v^\prime |\leq 2N_0$ and $|v_1^\prime |\leq N_0,|v^{\prime \prime}  |\leq 2N_0$, $s^\prime \geq t_1^\prime -\kappa_*\varepsilon^3$,
  \begin{equation*}
    |\J_5|\leq C_{N_0}\kappa_*\sup_{s \in [0,t]}\|h_{\m}(s)\|_\infty \,.
  \end{equation*}

  \noindent
  \textbf{Case 3b.} $|v_1|\leq N_0,|v^\prime |\leq 2N_0$ and $|v_1^\prime |\leq N_0,|v^{\prime \prime}  |\leq 2N_0$, $s^\prime \leq t_1^\prime -\kappa_*\varepsilon^3$,
  \begin{equation}\label{3.2.27}
    \begin{aligned}
        \J_5 = & \tfrac{1}{\varepsilon^6} \mathbbm{1}_{t_1>0,t_1^\prime >0} \int_{{v_1\cdot n>0,|v_1|\leq N_0}} \int_{v_1^\prime \cdot n>0,|v_1^\prime |\leq N_0} \int_{0}^{t_1} \int_{0}^{t_1^\prime -\varepsilon k_*} \exp\{-\tfrac{1}{\eps^3}\int_{s}^{t}{\nu_\eps(\tau) \d \tau} \} \\
        &\times \exp\{-\tfrac{1}{\eps^3}\int_{s^\prime }^{t_0^\prime }{\nu_\eps(\tau) \d \tau} \}  \int_{\substack{|v^\prime |\leq 2N_0}} \int_{\substack{|v^{\prime \prime}  |\leq 2N_0}} k_\omega  (v_1,v^\prime ) k_\omega  (v_1^\prime ,v^{\prime \prime}  ) \mathcal{R}(v,v_1) \mathcal{R}(v_0^\prime ,v_1^\prime ) \\
        & \times h_{\m}(s^\prime ,x_1^\prime + \tfrac{1}{\eps}(s^\prime -t_1^\prime )v_1^\prime ,v^{\prime \prime}  ) \, \d s ^\prime  \, \d s  \, \d v^\prime  \, \d v_1^\prime  \, \d v_1 \, \d v^{\prime \prime}  \,.
    \end{aligned}
    \end{equation}
  
  Based on \eqref{est_K_c}, $k_\omega  (v_1,v^\prime $) has a possible integrable singularity of $\frac{1}{|v_1-v^\prime |}$. We define $k_{N_0, \omega}$ with compact support 
  \begin{equation*}
    k_{N_0,\omega}(p,v^\prime )=\mathbbm{1}_{|p-v^\prime |\geq \frac{1}{m},|v^\prime |\leq m}k_\omega  (p,v^\prime )
  \end{equation*}
such that
  \begin{equation*} 
    \sup_{|p|\leq 3N_0}\int_{|v^\prime |\leq 3N_0}|k_{N_0,\omega}(p,v^\prime )-k_\omega  (p,v^\prime )|\d v^\prime \leq \frac{1}{N_0} \,.
  \end{equation*}
  We split
  \begin{equation*}
  \begin{aligned}
    &k_\omega  (v_1,v^\prime )k_\omega  (v_1^\prime ,v^{\prime \prime}  )\\
     =&\{k_\omega  (v_1,v^\prime )-k_{N_0,\omega}(v_1,v^\prime )\}k_{w}(v_1^\prime ,v^{\prime \prime}  )  +\{k_\omega  (v_1^\prime ,v^{\prime \prime}  )-k_{N_0,\omega}(v_1^\prime ,v^{\prime \prime}  )\}k_{N_0,\omega}(v_1,v^\prime )\\
     &+k_{N_0,\omega}(v_1,v^\prime )k_{N_0,\omega}(v^\prime ,v^{\prime \prime}  ) \,.
  \end{aligned}
  \end{equation*}
  The first two differences leads to a small contribution to $\J_5$,
  \begin{equation*} 
    \frac{C}{N_0}\sup_{s \in [0,t]}\|h_{\m}(s)\|_\infty \,.
  \end{equation*}
  For the main contribution of $k_{N_0,\omega}(v_1,v^\prime )k_{N_0,\omega}(v_1^\prime ,v^{\prime \prime}  )$. Using the boundary condition \eqref{3.2.3}, \eqref{3.2.27} reduces to the specular reflection part and the diffusion reflection part
    \begin{equation}
      \begin{aligned}
          & \frac{1-\alpha}{\varepsilon^6} \mathbbm{1}_{t_1>0,t_1^\prime >0} \int_{\substack{v_1\cdot n>0, \\ |v_1|\leq N_0}} \int_{0}^{t_1} \int_{0}^{t_1^\prime -\varepsilon^3 k_*} \int_{|v^\prime |\leq 2N_0} \int_{|v^{\prime \prime}  |\leq 2N_0} \exp\{-\tfrac{1}{\eps^3}\int_{s}^{t}{\nu_\eps(\tau) \d \tau} \} \\
          & \quad \times \exp\{-\tfrac{1}{\eps^3}\int_{s^\prime }^{t_0^\prime }{\nu_\eps(\tau) \d \tau} \}   k_{N_0,\omega}(v_1,v^\prime ) k_{N_0,\omega}(R_x v^\prime ,v^{\prime \prime}  ) \mathcal{R}(v,v_1)  \\
          & \quad \times h_{\m}(s^\prime ,x_1^\prime +\tfrac{1}{\eps}(s^\prime -t_1^\prime )R_x v^\prime ,v^{\prime \prime}  )  \d s ^\prime   \d s   \d v^\prime      \d v_1  \d v^{\prime \prime}   \\
          & + \frac{\alpha}{\varepsilon^6} \mathbbm{1}_{t_1>0,t_1^\prime >0} \int_{\substack{v_1\cdot n>0, \\ |v_1|\leq N_0}} \int_{\substack{v_1^\prime \cdot n>0, \\ |v_1^\prime |\leq N_0}} \int_{0}^{t_1} \int_{0}^{t_1^\prime -\varepsilon^3 k_*} \int_{|v^\prime |\leq 2N_0} \int_{|v^{\prime \prime}  |\leq 2N_0} \exp\{-\tfrac{1}{\eps^3}\int_{s}^{t}{\nu_\eps(\tau) \d \tau} \} \\
          & \quad \times \exp\{-\tfrac{1}{\eps^3}\int_{s^\prime }^{t_0^\prime }{\nu_\eps(\tau) \d \tau} \}   k_{N_0,\omega}(v_1,v^\prime ) k_{N_0,\omega}(v_1^\prime ,v^{\prime \prime}  ) \mathcal{R}(v,v_1) \sqrt{2\pi} \sqrt{ \tfrac{\mu_M(v^\prime _1)}{\mu_M(v^\prime _0)}} \mu(v^\prime) \tfrac{\omega_\beta(v^\prime)}{\omega_\beta(v^\prime_1)} |v_1^\prime \cdot n| \\
          & \quad \times h_{\m}(s^\prime ,x_1^\prime + \tfrac{1}{\eps}(s^\prime -t_1^\prime )v_1^\prime ,v^{\prime \prime}  ) \, \d s ^\prime  \, \d s  \, \d v^\prime  \, \d v_1^\prime  \, \d v_1 \, \d v^{\prime \prime}   \\
          & := \mathcal{J}_{5,1} + \mathcal{J}_{5,2} \,.
      \end{aligned}
      \end{equation}
  Recall $v_0^\prime =v^\prime $. To estimate $\mathcal{J}_{5,1}$, we make a change of variable $v^\prime \to y=x_1^\prime + \tfrac{1}{\eps}(s^\prime -t_1^\prime )R_xv^\prime $. One has
    \begin{equation*}
      \mid \det\left(\tfrac{\partial y}{\partial v^\prime }\right)\mid \geq (\varepsilon^2 k_*)^3>0 \quad \text{for} \quad \! s^\prime \in[0,t_1^\prime -k_*\varepsilon^3] \,,
    \end{equation*}
    which yields that
    \begin{equation*}
    \begin{aligned}
        &\int_{|v^\prime |\leq 2 N_0}|h_{\m}(s^\prime ,x_1^\prime +(s^\prime -t_1^\prime )R_xv^\prime ,v^{\prime \prime}  )|\d v^\prime \\
        \leq& C_{N_0}\left(\int_{|v^\prime |\leq 2N_0}\mathbbm{1}_{\Omega}\left(x_1^\prime +(s_1^\prime -t_1^\prime )R_xv^\prime \right)|h_{\m}(s^\prime ,x_1^\prime +(s^\prime -t_1^\prime )R_xv^\prime ,v^{\prime \prime}  )|^2\d v^\prime \right)^{\frac{1}{2}}\\
        \leq& \tfrac{C_{N_0}}{(\varepsilon^2 k_*)^{\frac{3}{2}}}\left(\int_{\Omega}|h_{\m}(s^\prime ,y,v^{\prime \prime}  )|^2dy\right)^\frac{1}{2} \,.
    \end{aligned}
    \end{equation*}
    Together with the definition of $h_{\m}$, $\mathcal{J}_{5,1}$ can be bounded by
    $$\tfrac{C_{N_0}}{(\varepsilon^2 k_*)^{\frac{3}{2}}}\sup_{s\in [0,t]}\|g_\m (s)\|_2 \,.$$
    Similarly, by performing the change of variable $v^\prime _1\to y_1=x_1^\prime + \tfrac{1}{\eps}(s^\prime -t_1^\prime )v_1^\prime $ and applying analogous arguments as above,  we can bound $\mathcal{J}_{5,2}$ by $\frac{C_{N_0}}{(\varepsilon^2 k_*)^{\frac{3}{2}}}\sup_{s\in [0,t]}\|g_\m (s)\|_2$. 
  Consequently, it holds that
  \begin{equation}\label{3.2.30}
    \J_5\leq \tfrac{C}{N_0}\sup_{s \in [0,t]}\|h_{\m}(s)\|_\infty+\tfrac{C_{N_0,k_*}}{\varepsilon^3}\sup_{s \in [0,t]}\|g_\m (s)\|_2 \,.
  \end{equation}

  Collecting all the above estimates, one has
  \begin{equation}\label{3.2.31}
  \begin{aligned}
     \sup_{s \in [0,t]} \|h_{\m}(s)\|_\infty\leq &C\|h_{\m}(0)\|_\infty+ C \eps^3 \sup_{s \in [0,t]} \|\nu^{-1} \omega_\beta \sqrt{\tfrac{\mu_\eps}{\mu_M}} S_\m (s)\|_\infty\\
     &+(C m^{3 + \gamma}+\tfrac{C_m}{N_0}+\kappa_* C_{N_0,m}) \sup_{s \in [0,t]}\|h_{\m}(s)\|_\infty 
      +\tfrac{C_{N_0,k_*,m}}{\varepsilon^3}\sup_{s \in [0,t]}\|g_{\m}(s)\|_2 \,,
  \end{aligned}
  \end{equation}
  which concludes Lemma \ref{Lmm_L_infty} by choosing $\varepsilon>0, m > 0, \kappa_*> 0$ sufficiently small, and $N_0 >0$ sufficiently large.
  
\end{proof}

\begin{corollary}\label{Coro_L_infty} Under the same assumptions as in Proposition \ref{Prop_g_k_g_b_k}, it follows that
  \begin{equation}
  \sup_{t \in [0, T_0]} \eps^6 \|h (t)\|^2_Y \leq C \eps^6 \|h (0)\|^2_Y + C \sup_{t \in [0,T_0]} \|g(t) \|^2_X +C \eps^{12} \,.
  \end{equation}
\end{corollary}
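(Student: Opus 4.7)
The plan is to upgrade the pointwise (in the multi-index $\m$) $L^\infty$ bound of Lemma~\ref{Lmm_L_infty} into a bound on the analytic $Y$-norm of $h$ by squaring that inequality, multiplying by the analytic weight $\sigma(t)^{2|\m|}/(\m!)^2$, and summing in $\m$. The first two contributions on the right-hand side of \eqref{h_L_infty} immediately produce $C\eps^6\|h(0)\|_Y^2$ and $C\sup_t\|g\|_X^2$, so the task reduces to controlling
\begin{equation*}
\mathcal{T}:=\eps^{12}\sup_{t\in[0,T_0]}\sum_{\m\in\mathbb{N}_0^2}\tfrac{\sigma(t)^{2|\m|}}{(\m!)^2}\bigl\|\nu^{-1}\omega_\beta\sqrt{\tfrac{\mu_\eps}{\mu_M}}S_\m(t)\bigr\|_\infty^2
\end{equation*}
by $C\eps^{12}$ plus quantities absorbable into the left-hand side $\eps^6\|h\|_Y^2$.

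To estimate $\mathcal{T}$, I will split $S=S_1+S_2+S_3+S_4$ as in \eqref{S_1_S_4} and treat each piece separately. For the quadratic term $S_1=\eps^2\Gamma_\eps(g,g)$, the weighted bilinear bound $\|\nu^{-1}\omega_\beta\sqrt{\mu_\eps/\mu_M}\Gamma_\eps(f_1,f_2)\|_\infty\leq C\|h_{f_1}\|_\infty\|h_{f_2}\|_\infty$, combined with Leibniz's rule for $\p^\m$ and a discrete convolution estimate for the analytic weight $\sigma^{|\m|}/\m!$, will produce a contribution $C\eps^{16}\|h\|_Y^4=C\eps^4(\eps^6\|h\|_Y^2)^2$, which is absorbable into $\eps^6\|h\|_Y^2$ once $\eps$ is taken sufficiently small (working under the natural bootstrap $\eps^6\|h\|_Y^2\leq C_T$). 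For $S_2$ and $S_3$, the same bilinear estimate together with the analytic bounds \eqref{R_analytic} on $\sqrt{\mu/\mu_\eps}(g_i+g_i^b+g_i^{bb})$ and on the Taylor remainder $r/\sqrt{\mu_\eps}$ from Proposition~\ref{Prop_g_k_g_b_k}, together with a careful tracking of the worst $\eps$-power appearing in the ansatz, give contributions of the form $C\eps^p(\eps^6\|h\|_Y^2)$ with $p>0$, again absorbable. Finally, the truncation source $S_4=\sqrt{\mu/\mu_\eps}(R_\eps+R_\eps^b+R_\eps^{bb})$ does not involve $h$ and is directly controlled in the analytic norm by Proposition~\ref{Prop_g_k_g_b_k}, producing a clean contribution of exact size $C\eps^{12}$.

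Combining the four bounds, choosing $\eps_0>0$ small enough so that every $C\eps^p(\eps^6\|h\|_Y^2)$ term can be moved to the left-hand side, and taking the supremum over $t\in[0,T_0]$ yields the claimed inequality. The main technical obstacle I foresee is the verification of the discrete convolution inequality
\begin{equation*}
\sum_\m\tfrac{\sigma^{2|\m|}}{(\m!)^2}\Bigl(\sum_{\n+\j=\m}\tbinom{\m}{\n}a_\n b_\j\Bigr)^2\leq C\,\|a\|_Y^2\,\|b\|_Y^2
\end{equation*}
adapted to the weight in \eqref{def_X_Y}. The binomial factor $\binom{\m}{\n}=\m!/(\n!\j!)$ exactly cancels the factorial denominator, reducing the bound to a Young-type convolution on $\ell^2(\mathbb{N}_0^2)$; however, making this rigorous requires using the shrinkage of $\sigma(t)=2-\lambda t$ (or equivalently the extra $|\m|$ factor in the $X^{1/2}$ norm) to gain an $\ell^1$-type factor on one side. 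This adjustment is standard for analytic function spaces but must be tracked carefully here because of the anisotropic $\eps$-dependence of the various components of $S$.
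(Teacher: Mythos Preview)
Your proposal is correct and follows the paper's proof essentially verbatim: square \eqref{h_L_infty}, weight by $\sigma(t)^{2|\m|}/(\m!)^2$, sum in $\m$, split $S=S_1+\cdots+S_4$, and bound each piece via the weighted bilinear estimate for $\Gamma_\eps$ together with Proposition~\ref{Prop_g_k_g_b_k}; the paper obtains exactly the contributions $C\eps^{16}\|h\|_Y^4$, $C\eps^9\|h\|_Y^2$, $C\eps^{10}\|h\|_Y^2$, and $C\eps^{12}$ for $S_1,\ldots,S_4$ respectively and absorbs the first three into the left for small $\eps$.

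On your technical concern about the discrete convolution: the paper does \emph{not} invoke any shrinkage of $\sigma$ or the $X^{1/2}$ mechanism at this step. It relies solely on the algebraic identity $\tfrac{\sigma^{2|\m|}}{(\m!)^2}(C_\m^\n)^2=\tfrac{\sigma^{2|\n|}}{(\n!)^2}\cdot\tfrac{\sigma^{2|\j|}}{(\j!)^2}$, after which the double sum over $(\n,\j)$ factorizes exactly into $\|h\|_Y^2\cdot\|h\|_Y^2$. The passage from the squared Leibniz sum $\bigl(\sum_{\n+\j=\m}C_\m^\n\cdots\bigr)^2$ to $\sum_{\n+\j=\m}(C_\m^\n)^2\|\cdots\|_\infty^2$ is written as an equality in the paper and not further justified, so your instinct that this requires care is reasonable; nevertheless, as presented the paper treats it as immediate, with no Young-type or radius-loss argument appearing.
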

\begin{proof}
  It follows from Lemma \ref{Lmm_L_infty} that 
    \begin{equation}
  \sup_{t \in [0, T_0]} \eps^6 \|h (t)\|^2_Y \leq C \eps^6 \|h (0)\|^2_Y + C \sup_{t \in [0, T_0]} \eps^{12} \| \nu^{-1} \omega_\beta \sqrt{\tfrac{\mu_\eps}{\mu_M}} S (t) \|_Y^2+ C \sup_{t \in [0,T_0]}\|g(t) \|^2_X   \,.
  \end{equation}
  It follows from the same proof as in \cite[Lemma 10]{Guo-2010-ARMA} that
    \begin{equation}
    |\nu^{-1} \tfrac{\omega_\beta}{\sqrt{\mu_M}} B(\tfrac{h_\n \sqrt{\mu_M}}{\omega_\beta}, \tfrac{h_\j \sqrt{\mu_M}}{\omega_\beta}) | \leq C  \| h_\n \|_\infty\| h_\j \|_\infty \,.
  \end{equation}
  Hence,
  \begin{equation}
    \begin{aligned}
      \eps^{12} \| \nu^{-1} \omega_\beta \sqrt{\tfrac{\mu_\eps}{\mu_M}} S_{1} \|_Y^2 =& \eps^{16} \sum_{\m \in \mathbb{N}^2_0, \n + \j = \m}\tfrac{\sigma(t)^{2 |\m|}}{(\m !)^2} (C_\m^\n)^2 \| \nu^{-1}  {\tfrac{\omega_\beta }{\sqrt{\mu_M}}} B (\tfrac{h_\n \sqrt{\mu_M}}{\omega_\beta},\tfrac{h_\j \sqrt{\mu_M}}{\omega_\beta}) \|_\infty^2 \\
       \leq& \eps^{16} \sum_{\m \in \mathbb{N}^2_0, \n + \j = \m}\tfrac{\sigma(t)^{2 |\m|}}{(\m !)^2} (C_\m^\n)^2  \| h_\n \|_\infty^2 \| h_\j \|_\infty^2 \\
       \leq& \eps^{16} \| h\|_Y^4 \,.
    \end{aligned}
  \end{equation}
  Similarly, it holds by using Proposition \ref{Prop_g_k_g_b_k} that
   \begin{equation}
    \begin{aligned}
     &\eps^{12} \|\nu^{-1} \omega_\beta \sqrt{\tfrac{\mu_\eps}{\mu_M}} S_{2 } \|_Y^2 \\
     =&\eps^{12} \sum_{\m \in \mathbb{N}^2_0, \n + \j = \m}\tfrac{\sigma(t)^{2 |\m|}}{(\m !)^2}(C_\m^\n)^2 \|\sum_{i=1}^{13} \eps^{\frac{i-4}{2}} \nu^{-1}\tfrac{\omega_\beta}{\sqrt{\mu_M}} \big\{ B(\sqrt{\mu}(g_i + g^b_i + g^{bb}_i)_\j, \tfrac{h_\n \sqrt{\mu_M}}{\omega_\beta}) \\
     &\qquad\qquad\qquad\qquad\qquad\qquad+ B(\tfrac{h_\n \sqrt{\mu_M}}{\omega_\beta}, \sqrt{\mu}(g_i + g^b_i + g^{bb}_i))_\j \big\} \|_\infty^2 \\
      \leq & C \eps^9 \| h  \|_Y^2\|\tfrac{\omega_\beta \sqrt{\mu}}{\sqrt{\mu_M}} \sum_{i=1}^{13}(g_i + g^b_i + g^{bb}_i) \|_Y^2 \leq C \eps^9 \| h  \|_Y^2 \,,
    \end{aligned} 
  \end{equation} 
and 
\begin{equation}
    \begin{aligned}
     &\eps^{12} \|\nu^{-1} \omega_\beta \sqrt{\tfrac{\mu_\eps}{\mu_M}} S_{3 } \|_Y^2 \\
     =&\eps^{12} \sum_{\m \in \mathbb{N}^2_0, \n + \j = \m}\tfrac{\sigma(t)^{2 |\m|}}{(\m !)^2}(C_\m^\n)^2 \| \tfrac{1}{\eps} \nu^{-1}\tfrac{\omega_\beta}{\sqrt{\mu_M}} \big\{ B(r_\j, \tfrac{h_\n \sqrt{\mu_M}}{\omega_\beta})  + B(\tfrac{h_\n \sqrt{\mu_M}}{\omega_\beta}, r_\j) \big\} \|_\infty^2 \\
      \leq & C \eps^{10} \| h  \|_Y^2\|\tfrac{\omega_\beta }{\sqrt{\mu_M}} r \|_Y^2 \leq C \eps^{10} \| h  \|_Y^2 \,.
    \end{aligned} 
  \end{equation} 
  Proposition \ref{Prop_g_k_g_b_k} further implies that
\begin{equation}
    \begin{aligned}
     &\eps^{12} \|\nu^{-1} \omega_\beta \sqrt{\tfrac{\mu_\eps}{\mu_M}} S_{4} \|_Y^2  
      =& C \eps^{12} \| \nu^{-1}\omega_\beta \sqrt{\tfrac{\mu}{\mu_M}}(R_\eps + R^b_\eps + R^{bb}_\eps) \|_Y^2  \leq C \eps^{12} \,.
    \end{aligned} 
  \end{equation}
As a result, we have
    \begin{equation}
  \sup_{t \in [0, T_0]} \eps^6 \|h (t)\|^2_Y \leq C \eps^6 \|h (0)\|^2_Y + C \sup_{t \in [0, T_0]}\eps^{16}\|h \|_Y^4 + C\sup_{t \in [0, T_0]} \eps^9 \|h (t)\|^2_Y  + C \sup_{t \in [0,T]} \|g(t) \|^2_X +C \eps^{12} \,.
  \end{equation}
The proof of the corollary is finished.
\end{proof}

\subsection{Proof of Theorem \ref{Main_theorem}} It follows from Lemma \ref{Lmm_L_2} and Lemma \ref{Lmm_L_infty} that
  \begin{equation}
  \begin{aligned}
    & \tfrac{1}{2}\tfrac{\d}{\d t} \|g \|_X^2 + \lambda \| g\|^2_{X^\frac{1}{2}}+ \tfrac{(2 \alpha - \alpha^2)}{2 \eps} \sum_{\m \in \mathbb{N}_0^2} \tfrac{\sigma(t)^{2 |\m|}}{(\m !)^2}\int_{\Sigma_+}(v\cdot n)\left|(I-P_\gamma)g_\m \right|^2d\bar{x}\d v  + \tfrac{\tilde{c}_0}{\eps^3} \|\nu^{\frac{1}{2}} \P^\perp_\eps g \|_{X}^2 \\
     \leq & \eps \tfrac{\d}{\d t} \sum_{\m \in \mathbb{N}_0^2} \tfrac{\sigma(t)^{2 |\m|}}{(\m !)^2}  \mathbb{G}_\m(t)+  C(\eps^7 \| h(0)\|_Y^2 + \eps \sup_{s \in [0,t]} \|g (s)\|^2_X) \| g\|_X^2 \\
    & + C \| g \|_{X}^2 + C + (C_1 + 2 C \eps \lambda ) \| g \|_{X^\frac{1}{2}}^2 \,.
  \end{aligned}
  \end{equation}
We now  choose $\lambda = 2 C_1$, $T_1 >0$ and $\eps_0$ small so that
\begin{equation*}
  \sigma (t) \geq 1 \quad \textrm{for} \quad \! t \in [0,T_1], \quad  4 C_1 T_1 \leq 1\,.
\end{equation*}
Let $T = \min (T_0,T_1)$. Note that $\|\mathbb{G}_\m \| \leq C \| g_\m \|^2_2$. Then by Lemma \ref{Lmm_L_infty} and the Gr\"onwall inequality, we have for $t \in[0,T]$,
\begin{equation}
\begin{aligned}
    \| g (t)\|_X^2   \leq C(\| g(0)\|_{X^2} +1)e^{C t ( \eps \sup_{s \in[0,T]}\| g (s)\|_X^2 + \eps^7\| h(0) \|_Y^2 + 1)} \,.
\end{aligned}
\end{equation}
For bounded $\sup_{s \in[0,T]}\| g (s)\|_X^2$  and for $\eps$ small, using the Taylor expansion of the exponential function in the above inequality, we have
\begin{equation}
\begin{aligned}
     \| g (t)\|_X^2 \leq C_T(\|g(0)\|_X^2 + 1) ( \eps \sup_{s \in[0,T]}\| g (s)\|_X^2 + \eps^7 \| h(0) \|_Y^2 + 1)\,.
\end{aligned}
\end{equation}
Then by letting $\eps$ small, we conclude that 
\begin{equation*}
       \| g (t)\|_X^2 \leq C_T(\|g(0)\|_X^2 + 1) ( \eps^7 \| h(0) \|_Y^2 + 1) \,.
\end{equation*}
This proves our main theorem.

\section{Formal derivation for general $q>1$}\label{Sec_general_q}
We consider the Boltzmann equation under the general scaling parameter $q >1$.
\begin{equation}
  \eps \p_t F_\eps + v\cdot \nabla_x F_\eps = \tfrac{1}{\eps^{q}} B(F_\eps, F_\eps)\,.
\end{equation}
We take $F_\eps = \mu + \eps \sqrt{\mu}g_\eps$, and the equation fo $g_\eps$ is
\begin{equation}
  \eps \p_t g_\eps + v\cdot \nabla_x g_\eps + \tfrac{1}{\eps^q} \L g= \tfrac{1}{\eps^{q-1}} \Gamma(g_\eps, g_\eps)\,.
\end{equation}
The formal Hilbert expansion of $g_\eps$ must account for all powers of $\eps$ appearing in the equation. Specifically, the asymptotic structure is determined by the characteristic thicknesses of the boundary layers: the Prandtl layer has thickness $O(\eps^{\frac{q-1}{2}})$, and the Knudsen layer has thickness $O(\eps^q)$. To consistently capture these multiple scales, we propose the following multi-scale ansatz:
\begin{equation}
  g_\eps =\sum_{m,n,j\geq 0} \eps^{m+ \frac{q-1}{2} n + q j}G_{mnj}\,,
\end{equation}
where $G_{mnj} = g_{mnj}+g^b_{mnj}+ g^{bb}_{mnj}$, representing the interior solution, the Prandtl boundary layer, and the kinetic (Knudsen) boundary layer, respectively. It is crucial to observe that 
$$\eps^{q j } = \eps^{\frac{q - 1}{2}\cdot 2 j + j}  \,,$$ 
which can be absorbed into the indices $m$ and $n$. Thus, the ansatz simplifies to 
\begin{equation}
  g_\eps = \sum_{m,n\geq 0}\eps^{m  + \frac{q-1}{2} n}G_{mn}\,.
\end{equation}
We distinguish three cases according to the arithmetic nature of $q$.

\subsection*{\ Case 1. $q \in \mathbb{Z}, q\geq 2$.} In this case, $\frac{q - 1}{2} \in \mathbb{Q}$. If $q $ is odd, then $\frac{q-1}{2} \in \mathbb{Z}$, and the expansion takes the standard Hilbert-type form: 
\begin{equation}
 g_\eps =   G_{00} + \eps G_{01} + \eps^2 G_{02} + \cdots\,.
\end{equation}
If $q$ is even, then $\frac{q -1 }{2} \notin \mathbb{Z}$, and the expansion includes fractional powers of $\eps$:
\begin{equation}
 \begin{aligned}
   g_\eps = &G_{00} + \eps G_{01} + \eps^2 G_{02} + \cdots\\
   & +\eps^{\frac{q -1}{2}} \{G_{10} + \eps G_{11} + \eps^2 G_{12} + \cdots\} \,. \\ 
 \end{aligned}
\end{equation}
In particular, the scaling $q = 2$ corresponds to the case considered in this paper, and the expansion structure coincides with the form above.

\subsection*{ Case 2. $q \in \mathbb{Q} / \mathbb{Z}, q = \tfrac{b}{a} >1, \gcd(a,b) = 1$.}  
We further divide this case into two subcases depending on the parity of $b -a$
\begin{itemize}
  \item $b -a$ is even. The exponent $\tfrac{q - 1}{2} = \tfrac{b - a }{2 a}$ is rational, and the expansion terminates at a finite level:
\begin{equation}
 \begin{aligned}
  g_\eps = & G_{00} + \eps G_{01} + \eps^2 G_{02} + \cdots\\
   &+ \eps^{\frac{q -1}{2}}\{ G_{10} + \eps G_{11} + \eps^2 G_{12} + \cdots\}\\ 
   &+ \cdots\\
   &+ \eps^{\frac{(a-1)(q -1)}{2}}\{ G_{(a-1)0} + \eps G_{(a-1)1} + \eps^2 G_{(a-1)2} + \cdots\}\,.
 \end{aligned}
\end{equation}

\item $b -a$ is odd. To preserve integrality of all exponents, the expansion must extend further, up to order $(2a - 1)$:
\begin{equation}
 \begin{aligned}
   g_\eps = &  G_{00} + \eps G_{01} + \eps^2 G_{02} + \cdots\\
   &+\eps^{\frac{q -1}{2}}\{ G_{10} + \eps G_{11} + \eps^2 G_{12} + \cdots\}\\ 
   &+ \cdots\\
   &+ \eps^{\frac{(2a-1)(q -1)}{2}}\{ G_{(2a-1)0} + \eps G_{(2a-1)1} + \eps^2 G_{(2a-1)2} + \cdots\}\,.
 \end{aligned}
\end{equation}

\end{itemize}
\subsection*{ Case 3: $q \in \mathbb{R} / \mathbb{Q}, q >1$.}  When $q$ is irrational, the exponent $\frac{q - 1}{2}$ is also irrational, and the powers of $\eps$ appearing in the expansion cannot be expressed as finite linear combinations of a fixed finite set of exponents with non-negative integer coefficients.
 As a result, the expansion has infinitely many rows of different fractional orders:
\begin{equation}
\begin{aligned}
g_\varepsilon =\ & G_{00} + \varepsilon G_{01} + \varepsilon^2 G_{02} + \cdots \\
&+ \varepsilon^{\frac{q - 1}{2}} \{ G_{10} + \varepsilon G_{11} + \varepsilon^2 G_{12} + \cdots \} \\
&+ \varepsilon^{ \frac{2(q - 1)}{2}} \{ G_{20} + \varepsilon G_{21} + \eps^2 G_{22}+ \cdots \}\\
& + \cdots \,.
\end{aligned}
\end{equation}
This expansion involves infinitely many layers, each associated with a distinct irrational power of $\eps$, reflecting the non-periodic structure of the scales in this setting. Although there are infinitely many rows in this case due to the irrational nature of the scaling, we will truncate the ansatz at a suitable order in both rows and columns. The resulting truncated ansatz thus contains only finitely many terms.

Once the multi-scale ansatz is constructed, one can proceed with the formal derivation of the fluid-dynamic equations. This is achieved by substituting the expansion into the kinetic equation and collecting terms of the same order in $\eps$. In this way, we recover the governing equations for the interior solution, the Prandtl boundary layer, and the Knudsen layer, following a procedure analogous to the classical Hilbert expansion described in Section \ref{sec_Hilbert}. By truncating the ansatz at a suitable order and applying a similar technical method as in Section \ref{Sec_remainder_uniform}, we can justify the incompressible Euler limit for general $q > 1$.

\bigskip

\bibliography{reference}

\end{document}